\documentclass[11pt,twoside]{article}

\usepackage[margin=1in]{geometry}

\usepackage{amsmath,amssymb,amsfonts,amsthm}
\usepackage{bm}
\usepackage{mathrsfs}
\numberwithin{equation}{section}
\allowdisplaybreaks

\usepackage{graphicx}
\usepackage{float}
\usepackage{array}
\usepackage{booktabs}
\usepackage{multirow}
\usepackage{makecell}

\usepackage{cite}
\usepackage{xcolor}
\usepackage[hidelinks]{hyperref}
\usepackage[nameinlink,capitalize,noabbrev]{cleveref}

\usepackage{fancyhdr}
\theoremstyle{plain}
\newtheorem{theorem}{Theorem}[section]
\newtheorem{lemma}[theorem]{Lemma}

\theoremstyle{definition}

\newtheorem{assumption}[theorem]{Assumption}
\newtheorem{example}[theorem]{Example}
\theoremstyle{remark}
\newtheorem{remark}[theorem]{Remark}

\newcommand{\vertiii}[1]{{\left\vert\kern-0.25ex\left\vert\kern-0.25ex\left\vert #1 
\right\vert\kern-0.25ex\right\vert\kern-0.25ex\right\vert}}

\hypersetup{
  pdftitle={Incremental SVD Compression for Nonlinear Oldroyd Equations with General Memory Kernels},
  pdfauthor={Gang Chen, Yangwen Zhang, and Dujin Zuo}
}

\title{Incremental SVD Compression for Nonlinear Oldroyd Equations with General Memory Kernels}
\author{Gang Chen \and Yangwen Zhang \and Dujin Zuo}
\date{\today}

\begin{document}

\maketitle

\begin{abstract}

We study mixed finite element/Crank--Nicolson discretizations of a nonlinear Oldroyd problem with general nonsingular and weakly singular memory kernels. Direct evaluation of the history term requires storing all previous velocity snapshots, which leads to $\mathcal{O}(mN)$ memory and $\mathcal{O}(mN^2)$ work over $N$ time steps, where $m$ denotes the number of spatial degrees of freedom. To reduce this burden, we compress the velocity history online by an incremental singular value decomposition and use the compressed representation in the discrete memory term. Under an approximate low-rank assumption of numerical rank $r$, the storage decreases to $\mathcal{O}((m+N)r)$, while the total history-evaluation work becomes $\mathcal{O}(mNr+rN^2)$. For nonsingular kernels, we derive a tolerance-dependent perturbation estimate showing that the baseline finite element accuracy is retained when the compression tolerance is sufficiently small. We also extend the approach to tempered weakly singular kernels via convolution quadrature. Numerical tests show near-indistinguishable solutions from the uncompressed scheme for the reported tolerances, together with substantial memory savings and reduced wall-clock time.

\end{abstract}

\begin{center}
\textbf{Keywords:} nonlinear Oldroyd equations, memory kernels, mixed finite element methods, incremental singular value decomposition, low-rank compression, convolution quadrature\\[0.4em]
\textbf{MSC codes:} 65M60, 65M12, 65M15, 65F55, 45K05, 76A10
\end{center}

\section{Introduction}

Nonlinear Oldroyd equations with memory terms arise naturally in the modeling of viscoelastic flows. After spatial and temporal discretization, the principal computational difficulty is the nonlocal history term: at each new time step, the solver must access a weighted combination of all previous velocity states. If the spatial discretization has $m$ degrees of freedom and the computation uses $N$ time steps, then straightforward history evaluation requires $\mathcal O(mN)$ storage and $\mathcal O(mN^2)$ total work. Reducing this history burden without sacrificing the accuracy of the underlying flow solver is the main objective of this paper.

\begin{subequations}\label{oldroyd-module}
	\begin{align}
		\bm{u}_t-\mu\Delta \bm{u}-\int_0^t \rho e^{-\delta(t-s)}\Delta \bm{u}(s){\rm d}s+(\bm{u}\cdot \nabla)\bm{u}+\nabla p&=\bm{f}, &\text{in } \Omega\times \left(0,T \right], \\
		\nabla \cdot\bm{u}&=0, &\text{in } \Omega\times \left(  0,T \right],\\
		\bm{u}&=\bm 0,&\text{on } \partial \Omega\times \left(  0,T \right],\\
		\bm{u}(\bm x,0)&=\bm{u}_0(\bm x),&\text{in } \Omega,
	\end{align}
\end{subequations}

Here $\bm u=\bm u(\bm x,t)$ is the velocity, $p=p(\bm x,t)$ is the pressure, $\bm f$ is a prescribed body force, $T$ is the final time, and $\Omega\subset\mathbb R^d$ $(d=2,3)$ is a bounded convex polygonal domain. A substantial numerical-analysis literature studies Oldroyd-type models and related nonlocal flow equations; see, for example, \cite{CannonJohnR.1999AmnG,LarssonStig1989TLBo,PaniAmiyaK.2005SfeG,AbbaszadehMostafa2020IotO,BirBikram2022BEmf,GuoYingwen2022Ceaf}. When $\rho=0$, \Cref{oldroyd-module} reduces to the incompressible Navier--Stokes equations, while the memory term introduces genuinely nonlocal-in-time dynamics.

Motivated by viscoelastic models with more general memory laws, we consider the following nonlinear Oldroyd problem with a general nonnegative memory kernel $K(t)$:

Find $ \bm{u}(\bm x,t) $ such that 
\begin{subequations}\label{OldryodModule-general}
	\begin{align}
		\bm{u}_t+\mathscr{A} \bm{u}+\int_0^t K(t-s)\mathscr{B} \bm{u}(s) \ {\rm d}s+(\bm{u}\cdot \nabla)\bm{u}+\nabla p&=\bm{f},&\text{in } \Omega\times \left(  0,T \right], \\
		\nabla \cdot\bm{u}&=0, &\text{in } \Omega\times \left(  0,T \right],\\
		\bm{u}&=\bm 0,&\text{on } \partial \Omega\times \left(  0,T \right] ,\\
		\bm{u}(\bm x,0)&=\bm{u}_0(\bm x),&\text{in } \Omega,
	\end{align}
\end{subequations}
where $\Omega \subset \mathbb{R}^d (d=2,3)$ is a bounded convex polygonal domain with Lipschitz boundary $\partial \Omega$, $ \bm u_0$ is a given function defined on $\Omega$, and $K(t)$ is either nonsingular or weakly singular. The function $\bm f$ is known, $\mathscr{A}$ is a symmetric positive definite second-order elliptic operator of the form
\begin{align}\label{ass-A}
	\mathscr{A}&=-\sum_{i, j=1}^d \frac{\partial}{\partial x_i}\left(a_{i j}(\bm x) \frac{\partial}{\partial x_j}\right)+a(\bm x) I,\qquad a(\bm x) \geq 0, \\
	a_{i j}(\bm x)=a_{j i}(\bm x), \qquad &a_1 \sum_{i=1}^d \xi_i^2 \geq \sum_{i, j=1}^d a_{i j} \xi_i \xi_j \geq a_0 \sum_{i=1}^d \xi_i^2,\qquad a_1 \geq a_0>0 ,
\end{align}
for all $\xi_i\in\mathbb R$, and $I$ stands for the $d\times d$ identity matrix. The operator $\mathscr{B}$ is a second-order linear operator of the form
\begin{align*}
	\mathscr{B}=-\sum_{i, j=1}^d \frac{\partial}{\partial x_i}\left(b_{i j}(\bm x) \frac{\partial}{\partial x_j}\right)+\sum_{i=1}^d b_i(\bm x) \frac{\partial}{\partial x_i}+b(\bm x) I.
\end{align*}

A large body of work reduces the cost of memory terms by exploiting special structure of the kernel, for example fast convolution quadrature, fast algorithms for fractional kernels, and related multistep or sum-of-exponentials techniques \cite{LubichC.1988Cqad,MR2231714,EduardoCuesta2006Cqtd,GuoLing2019Emmf,HuangYuxiang2022Eeot}. Our perspective is different. Instead of approximating the kernel, we compress the numerical \emph{solution history} itself. The key idea is to exploit approximate low rank in the matrix of velocity snapshots and update its low-rank representation online by incremental SVD. This viewpoint is closely related to online low-rank compression and incremental POD ideas for PDE data \cite{BrandMatthew2002ISVD,FareedHiba2018Ipod,FareedHiba2020Eaoa}, but here the compressed representation is used only inside the history term of the full-order Oldroyd solver.

Compared with our earlier work \cite{ChenGang2023AISM} on linear integro-differential equations, the current paper treats the nonlinear Oldroyd system, incorporates the mixed finite element structure and pressure variable, and extends the compression strategy to the weakly singular setting studied in Section~\ref{section 6}. The method is kernel-agnostic at the level of history compression, but its effectiveness depends on numerical low rank of the snapshot matrix.

\paragraph{Main contributions.}
The main contributions of this paper are as follows.
\begin{enumerate}
	\item We propose an online incremental-SVD compression strategy for the velocity history in mixed finite element/Crank--Nicolson discretizations of nonlinear Oldroyd equations with general memory kernels.
	\item Under a numerical rank-$r$ assumption, the method reduces history storage from $\mathcal O(mN)$ to $\mathcal O((m+N)r)$.
	\item For nonsingular kernels, we derive a tolerance-dependent perturbation estimate showing that the compressed scheme preserves the baseline finite element accuracy when the compression tolerance is sufficiently small.
	\item We extend the same compression strategy to tempered weakly singular kernels discretized by convolution quadrature.
	\item Numerical experiments show that, for the reported tolerances, the compressed solver is essentially indistinguishable from the uncompressed solver while requiring markedly less memory.
\end{enumerate}

The rest of the paper is organized as follows. Section~\ref{section 2} introduces the mixed finite element setting and the fully discrete scheme for nonsingular kernels. Section~\ref{sec3} recalls the incremental-SVD update used in the compression step. Section~\ref{section 4} integrates the compression procedure into the Oldroyd solver. Section~\ref{section 5} presents the nonsingular-kernel error analysis, Section~\ref{section 6} treats the weakly singular case, Section~\ref{section 7} reports numerical experiments, and Section~\ref{section 8} concludes with limitations and future directions.

\section{Fully discrete scheme for nonsingular kernels}\label{section 2}

In this section we introduce the mixed finite element setting and the fully discrete Crank--Nicolson approximation for \eqref{OldryodModule-general} in the nonsingular-kernel case. We first fix notation and then state the semidiscrete and fully discrete schemes.

\subsection{Preliminaries}
	For the mathematical setting of problems $ \eqref{OldryodModule-general} $, we use standard notations in Sobolev space with norm or seminorm and introduce the following space:
	\begin{align*}
		\bm V=(H_0^1(\Omega))^d,\ Q=L_0^2(\Omega),\ \bm{W}=\{\bm{w}\in \bm{ V}:\nabla\cdot\bm{w}=0\}.
	\end{align*} 
	Let $ \|\cdot\|_i $ and $ |\cdot|_i $ denote   the usual norm and seminorm defined on $ H^{i}(\Omega) $ or $ (H^i(\Omega))^d $ for any integer $ i $, and let $ (\cdot,\cdot)  $ represent $ L^2 $ inner product on $ \Omega $.  If $ i=0 $, we omit the subscript in $ \|\cdot\|_i $. 
	
	We define the bilinear operator $ \mathcal{A}(\cdot,\cdot),\mathcal{B}(\cdot,\cdot)$  on $ \bm V\times \bm V $, $\mathcal{D} $ on $ \bm V\times Q $ and trilinear operator $ \mathcal{C} $ on $ \bm V\times \bm V\times \bm V $ as follows:
	\begin{align*}
		\mathcal{A}(\bm u,\bm v)&=\sum_{i, j=1}^d\left(a_{i j}(\bm x) \frac{\partial \bm u}{\partial x_i}, \frac{\partial \bm v}{\partial x_j}\right)+(a(\bm x)\bm u,\bm v), \\
		\mathcal{B}(\bm u,\bm v)&=\sum_{i, j=1}^d\left(b_{i j}(\bm x) \frac{\partial\bm u}{\partial x_i}, \frac{\partial\bm v}{\partial x_j}\right)+\sum_{i=1}^d\left(b_i(\bm x) \frac{\partial\bm u}{\partial x_i},\bm v\right)+(b(\bm x)\bm u,\bm v),\\
		\mathcal{C}(\bm{u},\bm{v},\bm{w})&=\frac{1}{2}((\bm{u}\cdot\nabla)\bm{v},\bm{w})-\frac{1}{2}((\bm{u}\cdot\nabla)\bm{w},\bm{v}),\\
		\mathcal{D}(\bm{u},p)&=(\nabla \cdot \bm{u},p).
	\end{align*}
	Then we can derive the variational formula of $ \eqref{OldryodModule-general} $:\par Seek $ (\bm{u},p)\in \bm V\times Q $ such that
	\begin{align}\label{varitional-formula}
			(\bm{u}_t,\bm{v})+\mathcal{A}(\bm{u},\bm{v})+\mathcal{B}(\int_0^t K(t-s)\bm{u}(s){\rm d}s,\bm{v})&+\mathcal{C}(\bm{u},\bm{u},\bm{v})	-\mathcal{D}(\bm{v},p)=(\bm{f},\bm{v}),\\
			\mathcal{D}(\bm{u},q)&=0,\nonumber
	\end{align}
	for any $ (\bm{v},q)\in \bm V\times Q. $
	 Equivalently, find $ \bm{u}\in \bm{W} $ such that
	 \begin{align}
	 	(\bm{u}_t,\bm{v})+\mathcal{A}(\bm{u},\bm{v})+\mathcal{B}(\int_{0}^{t}K(t-s)\bm{u}(s){\rm d}s,\bm{v})+\mathcal{C}(\bm{u},\bm{u},\bm{v})=(\bm{f},\bm{v}),\ \forall \bm{v}\in \bm{W}, \ t>0.
	 \end{align}
	Here are some classical estimates of the trilinear form $ \mathcal{C}(\cdot,\cdot,\cdot) $, which are essential for the stability and convergence analysis of  system $ \eqref{OldryodModule-general} $.
	\begin{lemma} 
		\cite[Lemma 2.1]{GuoYingwen2022Ceaf}\label{trilinear-property}
		There exists a positive constant $ \gamma_0 $, depending only on $ \Omega $, such that :
		\begin{align}
			&\mathcal{C}(\bm{u},\bm{v},\bm{w})=-\mathcal{C}(\bm{u},\bm{w},\bm{v}),\ \forall \ \bm{u},\bm{v},\bm{w}\in \bm V,\\
			&\mathcal{C}(\bm{u},\bm{v},\bm{v})=0,\ \forall \ \bm{u}, \bm{v}\in \bm V,\\
			&|\mathcal{C}(\bm{u},\bm{v},\bm{w})|\le \gamma_0\|\nabla\bm{u}\|_0\|\nabla\bm{v}\|_0\|\nabla\bm{w}\|_0,\ \forall \ \bm{u},\bm{v},\bm{w}\in \bm V,\\
			&|\mathcal{C}(\bm{u},\bm{v},\bm{w})|+|\mathcal{C}(\bm{w},\bm{u},\bm{v})|\le \gamma_0(\|\bm{u}\|^{1/2}\|\nabla\bm{u}\|^{1/2}\|\nabla\bm{v}\|\notag\\
			&\qquad+\|\bm{v}\|^{1/2}\|\nabla\bm{v}\|^{1/2}\|\nabla\bm{u}\|)\|\bm{w}\|^{1/2}\|\nabla\bm{w}\|^{1/2},\ \forall \ \bm{u},\bm{v},\bm{w}\in \bm V.
		\end{align}
	\end{lemma}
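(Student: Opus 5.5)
The first two properties are purely algebraic and follow from the definition of $\mathcal{C}$. Swapping $\bm v$ and $\bm w$ in
\[
\mathcal{C}(\bm u,\bm v,\bm w)=\tfrac12((\bm u\cdot\nabla)\bm v,\bm w)-\tfrac12((\bm u\cdot\nabla)\bm w,\bm v)
\]
changes the sign, which gives $\mathcal{C}(\bm u,\bm v,\bm w)=-\mathcal{C}(\bm u,\bm w,\bm v)$. Setting $\bm w=\bm v$ then yields $\mathcal{C}(\bm u,\bm v,\bm v)=0$. This skew-symmetrized form is exactly what makes the identity hold for all $\bm u\in\bm V$, without requiring $\nabla\cdot\bm u=0$. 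We also note that the integrals are well defined, since $\bm u,\bm w\in L^4$ and $\nabla\bm v\in L^2$ by Sobolev embedding.

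For the third estimate, we bound each of the two terms in $\mathcal{C}$ by Hölder's inequality with exponents $(4,2,4)$:
\[
|((\bm u\cdot\nabla)\bm v,\bm w)|\le \|\bm u\|_{L^4}\|\nabla\bm v\|\,\|\bm w\|_{L^4}.
\]
Since $d\le 3$, the embedding $H_0^1(\Omega)\hookrightarrow L^4(\Omega)$ holds, and together with Poincaré's inequality it gives $\|\bm z\|_{L^4}\le C_\Omega\|\nabla\bm z\|$. The second term is handled symmetrically. This produces the bound $\gamma_0\|\nabla\bm u\|\|\nabla\bm v\|\|\nabla\bm w\|$ with $\gamma_0$ depending only on $\Omega$.

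The fourth estimate is the main point, and it rests on a Ladyzhenskaya/Gagliardo--Nirenberg interpolation inequality. In $d=2$ the inequality $\|\bm z\|_{L^4}\le C\|\bm z\|^{1/2}\|\nabla\bm z\|^{1/2}$ holds for $\bm z\in H_0^1$. Applying the $(4,2,4)$ Hölder bound to $((\bm u\cdot\nabla)\bm v,\bm w)$ gives $C\|\bm u\|^{1/2}\|\nabla\bm u\|^{1/2}\|\nabla\bm v\|\|\bm w\|^{1/2}\|\nabla\bm w\|^{1/2}$. The term $((\bm u\cdot\nabla)\bm w,\bm v)$ requires more care, because $\nabla\bm w$ then sits in $L^2$. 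We therefore integrate by parts using $\bm u|_{\partial\Omega}=0$:
\[
((\bm u\cdot\nabla)\bm w,\bm v)=-((\bm u\cdot\nabla)\bm v,\bm w)-((\nabla\cdot\bm u)\bm w,\bm v).
\]
Both resulting terms carry a derivative on $\bm u$ or $\bm v$ in $L^2$, and the other two factors are bounded in $L^4$ by interpolation. This yields terms of the form
\[
\|\bm v\|^{1/2}\|\nabla\bm v\|^{1/2}\|\nabla\bm u\|\,\|\bm w\|^{1/2}\|\nabla\bm w\|^{1/2}
\quad\text{and}\quad
\|\bm u\|^{1/2}\|\nabla\bm u\|^{1/2}\|\nabla\bm v\|\,\|\bm w\|^{1/2}\|\nabla\bm w\|^{1/2}.
\]
The term $|\mathcal{C}(\bm w,\bm u,\bm v)|$ is handled in the same way, placing the $L^2$ gradient on $\bm u$ or $\bm v$, and integrating by parts on the term where $\nabla\bm w$ would otherwise appear.

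The main obstacle is $d=3$. There the interpolation inequality becomes $\|\bm z\|_{L^4}\le C\|\bm z\|^{1/4}\|\nabla\bm z\|^{3/4}$, which does not give the exponents $1/2$ directly. I would instead use the $(3,2,6)$ Hölder split together with $\|\bm z\|_{L^3}\le C\|\bm z\|^{1/2}\|\nabla\bm z\|^{1/2}$ and $\|\bm z\|_{L^6}\le C\|\nabla\bm z\|$. The factors must be arranged so that each of $\bm u$ or $\bm v$, and $\bm w$, receives the $L^3$ interpolation, which again requires the integration-by-parts step to move derivatives off $\bm w$. If this bookkeeping fails for some term, the fallback is to cite the standard result (e.g.\ Temam, or Heywood--Rannacher), as the paper does via \cite[Lemma 2.1]{GuoYingwen2022Ceaf}.
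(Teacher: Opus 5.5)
\textbf{Verdict: genuine gap for $d=3$.} Your treatment of the first three properties (for $d=2,3$) and of the fourth estimate for $d=2$ is correct. H\"older with exponents $(4,2,4)$, Ladyzhenskaya's inequality, and the identity $((\bm u\cdot\nabla)\bm w,\bm v)=-((\bm u\cdot\nabla)\bm v,\bm w)-((\nabla\cdot\bm u)\bm v,\bm w)$ are the standard argument behind the cited lemma; the paper gives no proof of its own, only the reference. For $\mathcal{C}(\bm w,\bm u,\bm v)$ you do not even need integration by parts: in the skew form its two terms are $((\bm w\cdot\nabla)\bm u,\bm v)$ and $((\bm w\cdot\nabla)\bm v,\bm u)$.

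The gap is the fourth estimate for $d=3$, and it cannot be closed, because the inequality is false there. Your bookkeeping would need a $\bm u$- or $\bm v$-factor and the $\bm w$-factor both in $L^3$, next to a gradient in $L^2$. But $\tfrac13+\tfrac12+\tfrac13=\tfrac76>1$, so H\"older only gives $\|\nabla\bm u\|\|\nabla\bm v\|\|\bm w\|^{1/2}\|\nabla\bm w\|^{1/2}$ or $\|\bm u\|^{1/2}\|\nabla\bm u\|^{1/2}\|\nabla\bm v\|\|\nabla\bm w\|$.

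Scaling shows nothing better is possible. Fix $\bm u,\bm v,\bm w\in C_c^\infty(B_1(0))^d$ with $\mathcal{C}(\bm u,\bm v,\bm w)\neq0$. For example, $\bm u=\bm w=\phi\bm e_1$ and $\bm v=\psi\bm e_1$ with $\partial_1\psi=1$ on $\operatorname{supp}\phi$ give $\mathcal{C}=\tfrac34\|\phi\|^2$. Take a ball $B_r(\bm x_0)\subset\Omega$ and set $\bm z_\lambda(\bm x)=\bm z(\lambda(\bm x-\bm x_0))$ for $\lambda\ge1/r$. Then $\|\bm z_\lambda\|=\lambda^{-d/2}\|\bm z\|$, $\|\nabla\bm z_\lambda\|=\lambda^{1-d/2}\|\nabla\bm z\|$, and $\mathcal{C}(\bm u_\lambda,\bm v_\lambda,\bm w_\lambda)=\lambda^{1-d}\mathcal{C}(\bm u,\bm v,\bm w)$. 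The right-hand side of the fourth estimate scales like $\lambda^{2-3d/2}$. For $d=2$ both sides scale like $\lambda^{-1}$. For $d=3$ the left side divided by the right side grows like $\lambda^{1/2}$, so no constant $\gamma_0$ exists.

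Your fallback of citing Temam or Heywood--Rannacher is therefore not available either, since any correct three-dimensional bound must carry different exponents. The statement, which the paper asserts for $d=2,3$, has to be restricted to $d=2$, presumably the setting of the cited source. Alternatively, for $d=3$ it can be replaced by a scale-consistent bound such as $|\mathcal{C}(\bm u,\bm v,\bm w)|+|\mathcal{C}(\bm w,\bm u,\bm v)|\le C\|\nabla\bm u\|\|\nabla\bm v\|\|\bm w\|^{1/2}\|\nabla\bm w\|^{1/2}$. Your $L^6$--$L^2$--$L^3$ splitting (with $L^3$ on $\bm w$), together with the same integration by parts, does prove this bound. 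With it, the later Young-inequality steps that produce terms like $C\Delta t\|\bar{\widehat{\bm e}}_n\|^2$ would need control of $\Delta t\sum_n\|\nabla\bar{\bm u}_h^n\|^4$ (or a uniform $H^1$ bound on the discrete velocity), which the energy-type stability estimate does not supply.
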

	
\subsection{Mixed finite element setting}

Let $ \bm V_h\subset \bm V $ and $ Q_h\subset Q $ be conforming finite element spaces on a regular simplicial triangulation $\mathcal T_h$ of $\Omega$. The discrete divergence-free space is defined by
\begin{align*}
	\bm{W}_h=\{\bm{v}_h\in\bm{V}_h:\mathcal D(\bm v_h,q_h)=0,\ \forall \ q_h\in Q_h\}.
\end{align*}
We assume that the pair $(\bm V_h,Q_h)$ satisfies the discrete inf-sup condition: there exists a constant $\beta_0>0$, independent of $h$, such that
\begin{align*}
	\inf_{0\neq q_h\in Q_h}\sup_{0\neq \bm v_h\in \bm V_h}
	\frac{\mathcal D(\bm v_h,q_h)}{\|\bm v_h\|_1\|q_h\|}\ge \beta_0.
\end{align*}
Typical stable choices include the Mini element and the Taylor--Hood family; the numerical experiments in Section~\ref{section 7} use the Mini element. For the approximation argument, we introduce a divergence-preserving projection $\bm P_h:\bm W\cap (H^{k+1}(\Omega))^d\to \bm W_h$ and the $L^2$-projection $\rho_h:Q\to Q_h$, and assume that there exist integers $k,l\ge 1$ and a constant $\gamma_1>0$, independent of $h$, such that
\begin{align*}
	\|\bm P_h\bm v-\bm v\|+h|\bm P_h\bm v-\bm v|_1&\le \gamma_1h^{k+1}|\bm v|_{k+1},\qquad \forall \bm v\in \bm W\cap (H^{k+1}(\Omega))^d,\\
	\|\rho_h q-q\|+h|\rho_hq-q|_1&\le \gamma_1h^{l+1}|q|_{l+1},\qquad \forall q\in Q\cap H^{l+1}(\Omega).
\end{align*}
Such approximation properties are standard for stable mixed pairs; see, for example, \cite{GiraultRaviart1986}.

Then the semidiscrete Galerkin scheme for \eqref{OldryodModule-general} can be expressed as follows: find $(\bm{u}_h,p_h)\in \bm V_h\times Q_h$ such that, for all $t>0$ and all $(\bm{v}_h,q_h)\in \bm V_h\times Q_h$,

Then the  semidiscrete Galerkin scheme for the system \eqref{OldryodModule-general} can be expressed as follows: find $ (\bm{u}_h, p_h) \in \bm V_h\times Q_h$ such that for all $ t>0 $, $ (\bm{v}_h,q_h)\in \bm V_h\times Q_h  $:
	\begin{align}
		\begin{split}
			(\bm{u}_{h,t},\bm{v}_h)&+
			\mathcal{A}(\bm{u}_h,\bm{v}_h)
			-\mathcal{D}(\bm{v}_h,p_h)+\mathcal{D}(\bm{u}_h,q_h)\\
			+\mathcal{B}(\int_0^t&K(t-s)\bm{u}_h\ {\rm{d}}s,\bm{v}_h)+\mathcal{C}(\bm{u}_h,\bm{u}_h,\bm{v}_h)=(\bm{f},\bm{v}_h),\\
			&\bm{u}_h(0)=\bm{u}_{h}^0,
		\end{split}
	\end{align}
	where $\bm u_{h,t} $ denotes the time derivative of $ \bm u_h $ and  $\bm u_{h}^0 $ is some type of  projection of $ \bm{u}_0 $ onto space $ \bm V_h $, which will be specified later.

	\subsection{Fully discrete scheme}\label{Fully discrete scheme}

We now discretize \eqref{OldryodModule-general} in time by the Crank--Nicolson method for a nonsingular kernel $K$. Let $\Delta t=T/N$, let $t_n=n\Delta t$, and denote $\bar t_n=(t_n+t_{n-1})/2$. For the memory term we use a midpoint-type quadrature, following \cite{GuoYingwen2022Ceaf}. The resulting fully discrete scheme reads as follows:

	find $ (\bm{u}_h^{n},p_h^{n})\subset \bm V_h\times Q_h $  for $ n=1,2,\cdots,N $, such that
	\begin{align}\label{fully-discretization--nonsingular}
		\begin{split}
			(d_t\bm{u}_h^{n},\bm{v}_h)+
			\mathcal{A}(\bar{\bm{u}}_h^{n},\bm{v}_h)
			+\mathcal{B}(\bar{\bm u}_{K,h}^{\Delta t,n},\bm{v}_h)
			-&\mathcal{D}(\bm{v}_h,\bar p_h^{n})
			+\mathcal{C}(\bar{\bm{u}}_h^n,\bar{\bm{u}}_h^{n},\bm{v}_h)=(\bm{f}(\bar t_n),\bm{v}_h),\\
			&\mathcal{D}(\bar{\bm{u}}_h^{n},q_h)=0,
		\end{split}
	\end{align}
	where
	\begin{align*}
		&d_t\bm{u}_h^n=\frac{\bm{u}_h^n-\bm{u}_h^{n-1}}{\Delta t},\ 
		\overline{\bm{u}}_h^n=\frac{\bm{u}_h^n+\bm{u}_h^{n-1}}{2},\ 
		\bar t_n=\frac{t_n+t_{n-1}}{2},\\
		&\bar{\bm{u}}_{K,h}^{\Delta t,n}=\Delta t\sum_{j=1}^{n-1}K(\bar{t}_n-\bar{t}_j)\bar{\bm{u}}_h^j+\frac{\Delta t}{2}K(0)\bar{\bm{u}}_h^n,
	\end{align*}
and $\bm{u}_h^0=\bm P_h\bm u_0$.

\begin{remark}

A discrete initial pressure may be obtained from a standard projection of the momentum equation at $t=0$. Since this initialization is routine and does not affect the history-compression analysis, we omit the formula here.

\end{remark}

At time level $n+1$, the discrete memory term requires access to the previously computed velocity states $\{\bm u_h^j\}_{j=0}^{n}$. If $\bm V_h=\mathrm{span}\{\bm\phi_j\}_{j=1}^{m}$ and the coefficient vector of $\bm u_h^n$ in this basis is denoted by $\bm u_n\in\mathbb R^m$, then straightforward history storage costs $\mathcal O(mn)$, while direct accumulation of the history term over $n$ steps costs $\mathcal O(mn^2)$. Hence the standard scheme is linear in storage but quadratic in the number of time steps for history evaluation. The goal of the compression strategy below is to reduce the dependence on the full spatial dimension $m$ by exploiting approximate low rank of the snapshot matrix.

\section{Incremental SVD preliminaries}\label{sec3}

This section recalls the incremental SVD algorithm used to compress the velocity snapshot matrix online. The relevant structural assumption is \emph{numerical} low rank: for the tolerance $\mathtt{tol}$ used in the compression step, the singular values of the snapshot matrix decay sufficiently rapidly that only a small number $r\ll \min\{m,n\}$ need to be retained. In contrast to kernel-structured fast methods, the present approach is therefore kernel-agnostic but depends on empirical singular-value decay of the computed solution history.

Given a vector $u \in \mathbb{R}^m$ and an integer $r$ satisfying $r \leq m$, the notation $u(1:r)$ denotes the first $r$ components of $u$. Likewise, for a matrix $U \in \mathbb{R}^{m \times n}$, the notation $U(p:q,r:s)$ denotes the submatrix formed by rows $p,\ldots,q$ and columns $r,\ldots,s$. Throughout this section, $|\cdot|$ denotes the Euclidean norm on $\mathbb R^m$.

Assume that after processing the first $\ell$ columns of the data matrix $U=[u_1|\cdots|u_n]$, we have a rank-$k$ truncated SVD
\begin{align}\label{eq201}
	U_\ell \approx Q\Sigma R^\top,\qquad Q^\top Q=I_k,\qquad R^\top R=I_k,\qquad \Sigma=\mathrm{diag}(\sigma_1,\ldots,\sigma_k),
\end{align}
with $k\le r$. We summarize the update in four steps.

\subsection*{Step 1: Initialization}
				Assuming that the first column of matrix $U$, denoted as $u_1$, is non-zero, we can proceed to initialize the SVD of $u_1$ using the following approach:
				\begin{align*}
					\Sigma=(u_1^\top u_1)^{1/2},\qquad Q=u_1\Sigma^{-1},\qquad R=1.
				\end{align*}

				Assuming we already have  the  truncated SVD of rank $k$ for the first $\ell$ columns of matrix $U$, denoted as $U_{\ell}$:
				\begin{align}
					U_\ell \approx Q\Sigma R^\top,\quad \textup{with} \quad Q^\top Q=I_k,\quad R^\top R=I_k,\quad \Sigma= \texttt{diag}(\sigma_1,\cdots,\sigma_k),
				\end{align}
				where $ \Sigma \in \mathbb{R}^{k\times k}$ is a diagonal matrix with the $k$ ordered singular values of $ U_\ell$ on the diagonal, $ Q\in \mathbb{R}^{m\times k} $ is the matrix of the corresponding $k$ left singular vectors of $ U_\ell $ and $ R\in\mathbb{R}^{\ell\times k} $ is the matrix of the corresponding $ k $ right singular vectors of $ U_\ell $.
				
				Given our assumption that the matrix $U$ is low rank, it is reasonable to expect that most of the columns of $U$ are either linearly dependent or nearly linearly dependent on the vectors in $Q \in \mathbb{R}^{m \times k}$. Without loss of generality, we assume that the next $s$ vectors, denoted as $\left\lbrace u_{\ell+1}, \ldots, u_{\ell+s}\right\rbrace$, their residuals are less than a specified tolerance when projected onto the subspace spanned by the columns of $Q$. However, the residual of $u_{\ell+s+1}$ is larger than the given tolerance. In other words,
				\begin{subequations}
					\begin{align}
						|u_i-QQ^\top u_i  | &< \texttt{tol}, \quad i=\ell+1,\cdots, \ell+s,\label{lessthantol}\\
						|u_i-QQ^\top u_i   |  &\ge \texttt{tol}, \quad i=\ell+s+1.\label{largerthantol}
					\end{align}
				\end{subequations}
				The symbol $|\cdot|$ denotes the Euclidean norm within the realm of $\mathbb{R}^m$.
				
				\subsection*{Step 2: Update the SVD of $ U_{\ell+s} $ ($p$-truncation)}
				By  the  assumption \eqref{lessthantol}, we have 
				\begin{align*}
					U_{\ell+s}&= \left[ U_\ell\mid u_{\ell+1}\mid\cdots\mid u_{\ell+s}\right] \\
					&\approx \left[ Q\Sigma R^\top \mid u_{\ell+1}\mid\cdots\mid u_{\ell+s}\right] \\
					&\approx \left[ Q\Sigma R^\top \mid QQ^\top u_{\ell+1}\mid\cdots\mid QQ^\top u_{\ell+s}\right] \\
					&=Q \underbrace{\left[ \Sigma\mid Q^\top u_{\ell+1}\mid\cdots\mid Q^\top u_{\ell+s}\right] }_{Y}\left[\begin{array}{cc}
						R & 0 \\
						0 & I_s
					\end{array}\right]^\top.  
				\end{align*}    
				
				We can obtain the truncated SVD of $U_{\ell+s}$ by computing the thin SVD of the matrix $Y$. Specifically, let $Y = Q_Y \Sigma_Y R_Y^\top$ be the SVD of $Y$, and split $ R_Y $ into $ \left[\begin{array}{cc}
					R_Y^{(1)} \\
					R_Y^{(2)}
				\end{array}\right]$. With this, we can update the SVD of $U_{\ell+s}$ as follows:
				\begin{align*}
					Q\leftarrow QQ_Y,\quad \Sigma\leftarrow\Sigma_Y,\quad  R\leftarrow \left[\begin{array}{cc}
					 RR_Y^{(1)} \\
					 R_Y^{(2)}
					\end{array}\right] \in \mathbb{R}^{(\ell+s)\times k}.
				\end{align*}
				
				It is worth noting that the dimensions of the matrices $Q$ and $\Sigma$ remain unchanged, and we need to incrementally store the matrix $W = \left[ Q^\top u_{\ell+1}\mid\cdots\mid Q^\top u_{\ell+s}\right]$. As $W$ belongs to $\mathbb{R}^{k\times s}$ where $k\leq r$ is relatively small, the storage cost for this matrix  is low.   
				
				\subsection*{Step 3: Update the SVD of $ U_{\ell+s+1} $ (No truncation)}
				Next, we proceed with the update of the SVD for $U_{\ell+s+1}$. Firstly, we compute the residual vector of $u_{\ell+s+1}$ by projecting it onto the subspace spanned by the columns of $Q$, i.e.,
				\begin{align}\label{residual}
					e=u_{\ell+s+1}-QQ^\top  u_{\ell+s+1}.
				\end{align}
				
				First, we define $p = |e|$. Then, based on \eqref{largerthantol}, we deduce that $p > \textup{\texttt{tol}}$. Finally, we denote $\widetilde{e}$ as $e/p$.  With these definitions, we establish the following fundamental identity:
				\begin{align*}
					U_{\ell+s+1}&= \left[U_{\ell+s}\mid u_{\ell+s+1} \right] \\
					&\approx \left[ Q\Sigma R^\top \mid p\widetilde{e}+QQ^\top u_{\ell+s+1}\right] \\
					&\approx [Q \mid \widetilde{e}] \underbrace{\left[\begin{array}{cc}
							\Sigma & Q^{\top} u_{\ell+s+1} \\
							0 & p
						\end{array}\right]}_{\bar{Y}}\left[\begin{array}{cc}
						R & 0 \\
						0 & 1
					\end{array}\right]^{\top}.
				\end{align*}
				
				Let $ \bar{Q}\bar{\Sigma}\bar{R}^\top  $  be the full SVD of $\bar Y$. Then  the SVD of $ U_{\ell+s+1} $ can be approximated by
				\begin{align*}
					U_{\ell+s+1}  \approx (\left[ Q\mid \widetilde{e}\right]\bar{Q} )\bar{\Sigma} \left(\left[\begin{array}{cc}
						R & 0 \\
						0 & 1
					\end{array}\right]\bar{R}\right)^\top.
				\end{align*}
				
				With this, we can update the SVD of $U_{\ell+s+1}$ as follows:
				\begin{align*}
					Q\leftarrow (\left[ Q\mid \widetilde{e}\right])\bar{Q}, \quad \Sigma\leftarrow\bar{\Sigma}, \quad  R\leftarrow \left[\begin{array}{cc}
						R & 0 \\
						0 & 1
					\end{array}\right]\bar{R}.
				\end{align*}
				It is worth noting that, in this case, the dimensions of the matrices $Q$ and $\Sigma$ increase.
				\begin{remark}
					Theoretically, the residual vector $e$ in \eqref{residual} is orthogonal to the vectors in  the subspace spanned by the columns of $Q$. However, in practice, this orthogonality can be completely lost, a fact that has been confirmed by numerous numerical experiments \cite{FareedHiba2020Eaoa,FareedHiba2018Ipod,OxberryGeoffreyM.2017Lass}.  In \cite{GiraudL.2005Tloo}, Giraud et al. stressed that exactly two iteration-steps are enough to keep the orthogonality. To reduce computational costs, Zhang  \cite{ZhangYangwen2022Aata} suggested using the two iteration steps only when the inner product between $e$ and the first column of $Q$   exceeds a certain tolerance. Drawing from our experience, it is imperative to calibrate this tolerance to align closely with the machine error. For instance, as demonstrated in this paper, we consistently establish this tolerance as $10^{-14}$.
				\end{remark}
				\subsection*{Step 4: Singular value  truncation}
				For many  PDE data sets, they may have a large number of nonzero singular values but most of them are very small. Considering the computational cost involved in retaining all of these singular values, it becomes necessary to perform singular value truncation. This involves discarding the last few singular values if they fall below a certain tolerance threshold.
				\begin{lemma}\cite[Lemma 5.1]{ZhangYangwen2022Aata}
					Assume that $\Sigma=\operatorname{diag}\left(\sigma_1, \sigma_2, \ldots, \sigma_k\right)$ with $\sigma_1 \geq \sigma_2 \geq \ldots \geq \sigma_k$, and $\bar{\Sigma}=\operatorname{diag}\left(\mu_1, \mu_2, \ldots, \mu_{k+1}\right)$ with $\mu_1 \geq \mu_2 \geq \ldots \geq \mu_{k+1}$. Then we have
					\begin{align}
						\label{eq302}
						\mu_{k+1} &\le p, \\
						\label{eq303}
						\mu_{k+1} &\le \sigma_k \leq \mu_k \leq \sigma_{k-1} \leq \ldots \leq \sigma_1 \leq \mu_1.
					\end{align}
					
				\end{lemma}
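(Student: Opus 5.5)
The plan is to compare the singular values of the bordered matrix
\[
\bar Y=\begin{bmatrix}\Sigma & Q^\top u\\ 0 & p\end{bmatrix}\in\mathbb R^{(k+1)\times(k+1)}
\]
with those of $\Sigma$. I will use two facts: the interlacing theorem for singular values under deletion of a row or column, and the product formula for the determinant. The idea is to view $\Sigma$ as the matrix obtained from $\bar Y$ by deleting one column and one row, and to peel these off one at a time.

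\textbf{Interlacing \eqref{eq303}.} First delete the last column of $\bar Y$. This gives $Z=\begin{bmatrix}\Sigma\\ 0\end{bmatrix}\in\mathbb R^{(k+1)\times k}$, whose singular values are exactly $\sigma_1,\dots,\sigma_k$, since $Z^\top Z=\Sigma^2$. I will invoke the standard result (Thompson; Horn--Johnson, Cor.~7.3.6) that deleting one column from an $n\times n$ matrix yields singular values $\tau_i$ with $\mu_{i+1}\le\tau_i\le\mu_i$. To check it, note that $Z^\top Z$ is a principal $k\times k$ submatrix of $\bar Y^\top\bar Y$. Cauchy interlacing for Hermitian matrices then gives $\mu_{i+1}^2\le\sigma_i^2\le\mu_i^2$ for $i=1,\dots,k$. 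Taking square roots yields exactly the chain $\mu_{k+1}\le\sigma_k\le\mu_k\le\dots\le\sigma_1\le\mu_1$.

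\textbf{Bound \eqref{eq302}.} Since $\bar Y$ is block upper triangular,
\[
\prod_{i=1}^{k+1}\mu_i=|\det\bar Y|=p\prod_{i=1}^{k}\sigma_i .
\]
By interlacing, $\mu_i\ge\sigma_i$ for $i\le k$. If all $\sigma_i>0$, which holds because $\Sigma$ comes from a retained truncated SVD with positive singular values, I divide the identity to get
\[
\mu_{k+1}=p\prod_{i=1}^k\frac{\sigma_i}{\mu_i}\le p.
\]
If some $\sigma_i=0$, then $\sigma_k=0$, and interlacing gives $\mu_{k+1}\le\sigma_k=0\le p$ directly. An alternative route avoids the determinant: $\mu_{k+1}=\min_{|x|=1}|\bar Y^\top x|$, and choosing $x=e_{k+1}$ gives $|\bar Y^\top e_{k+1}|=|(0,\dots,0,p)|=p$.

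\textbf{Main obstacle.} The only delicate step is quoting interlacing in the right form: deleting a \emph{column} (not a row and a column together) from the square matrix, via the Gram matrix $\bar Y^\top\bar Y$. I would state that reduction explicitly. The variational argument with $x=e_{k+1}$ is the cleanest way to get \eqref{eq302}, and it needs no positivity assumption on the $\sigma_i$.
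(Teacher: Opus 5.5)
Your proof is correct. The leading $k\times k$ block of $\bar Y^\top\bar Y$ is $\Sigma^2$, so Cauchy interlacing gives \eqref{eq303}. The Rayleigh-quotient bound $\mu_{k+1}\le|\bar Y^\top e_{k+1}|=p$ gives \eqref{eq302}, and so does the determinant identity $\prod_i\mu_i=p\prod_i\sigma_i$ combined with $\mu_i\ge\sigma_i$.

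The paper gives no proof of this lemma and simply quotes it from \cite{ZhangYangwen2022Aata}, so there is no in-text argument to compare against. What you wrote is the standard argument for this bordered-matrix setting and fills in what the paper omits. Your $e_{k+1}$ route has the small advantage of needing no positivity of the $\sigma_i$ and no case split.
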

				
				The inequality $\eqref{eq302}$ indicates that, regardless of the magnitude of $p$, the last singular value of $\bar{Y}$ can potentially be very small. This implies that the tolerance set for $p$ cannot prevent the algorithm from computing exceedingly small singular values. Consequently, an additional truncation is necessary when the data contains numerous very small singular values. Fortunately, inequality $\eqref{eq303}$ assures us that only the last singular value of $\bar{Y}$ has the possibility of being less than the tolerance. Therefore, it suffices to examine only the last singular value.
				\begin{itemize}
					
					\item[ (i)] If $\bar{\Sigma}(k+1, k+1) \geq \mathtt{tol}$, then
					$$
					Q \longleftarrow[Q \mid \widetilde{e}] \bar Q,\quad \Sigma \longleftarrow \bar\Sigma,\quad R \longleftarrow\left[\begin{array}{cc}
						R & 0 \\
						0 & 1
					\end{array}\right]\bar R.
					$$

					\item[(ii)] If $\bar{\Sigma}(k+1, k+1)<\mathtt{tol}$, then
					$$
					Q \longleftarrow[Q \mid \widetilde{e}] \bar Q(:, 1:k),\quad \Sigma \longleftarrow \bar\Sigma(1:k,1:k),\quad R \longleftarrow\left[\begin{array}{cc}
						R & 0 \\
						0 & 1
					\end{array}\right]\bar R(:,1:k).
					$$
				\end{itemize}
				It is essential to note that $p$-truncation and no-truncation  do not alter the previous data, whereas singular value truncation may potentially change the entire previous data. However, we can establish the following bound:
				\begin{lemma}\cite[Lemma 3.3]{ChenGang2023AISM}\label{error_SingularValueTruncation}
					Suppose $Q\Sigma R^\top$ to be the SVD of $A\in \mathbb R^{m\times n}$, where $\{\sigma_i\}_{i=1}^r$ are the positive singular values. Let $B = Q(:,1:r-1)\Sigma(1:r-1,1:r-1)(R(:,1:r-1))^\top$. We have:
					\begin{align*}
						\max\{|a_1 - b_1|, |a_2- b_2|, \ldots, |a_n-b_n|\} \le \sigma_r.
					\end{align*}
					Here, $a_i$ and $b_i$ correspond to the $i$-th columns present in matrices $A$ and $B$ respectively. The symbol $|\cdot|$ denotes the Euclidean norm within the realm of $\mathbb{R}^m$.
				\end{lemma}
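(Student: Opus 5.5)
The plan is to write $A-B$ explicitly and read off its columns. Since $A=Q\Sigma R^\top=\sum_{i=1}^r \sigma_i q_i \rho_i^\top$, where $q_i$ and $\rho_i$ are the $i$-th columns of $Q$ and $R$, and $B$ keeps the first $r-1$ terms, we get
\begin{align*}
A-B=\sigma_r\, q_r \rho_r^\top .
\end{align*}
The $j$-th column is therefore $a_j-b_j=\sigma_r\,(\rho_r)_j\, q_r$.

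Next, take Euclidean norms. Because $Q$ has orthonormal columns, $|q_r|=1$, so
\begin{align*}
|a_j-b_j|=\sigma_r\,|(\rho_r)_j| .
\end{align*}
Because $R$ also has orthonormal columns, $\rho_r$ is a unit vector in $\mathbb R^n$. Each entry of a unit vector is at most $1$ in absolute value, so $|(\rho_r)_j|\le 1$. This gives $|a_j-b_j|\le\sigma_r$ for every $j$, and taking the maximum over $j=1,\dots,n$ proves the claim.

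There is no real obstacle here. The only point to check is that we work with the thin SVD, so that $Q\in\mathbb R^{m\times r}$ and $R\in\mathbb R^{n\times r}$ both have orthonormal columns. With a full SVD the extra zero singular values contribute nothing, and the same rank-one identity holds.

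A stronger version is also available: the bound $\max_j|a_j-b_j|\le\|A-B\|_2=\sigma_r$ follows from $|(A-B)e_j|\le\|A-B\|_2$. I would mention this as an alternative one-line justification.
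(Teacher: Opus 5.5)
Your proof is correct and complete. The identity $A-B=\sigma_r q_r\rho_r^\top$, together with $|q_r|=1$ and $|(\rho_r)_j|\le|\rho_r|=1$, gives the column bound directly. Your spectral-norm variant $|(A-B)e_j|\le\|A-B\|_2=\sigma_r$ is an equivalent one-line justification rather than a stronger statement, though it does carry over unchanged to discarding several trailing singular values, with bound $\sigma_{k+1}$.

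The paper gives no proof of this lemma; it imports it from \cite{ChenGang2023AISM}. Your rank-one column computation is the standard argument for it.
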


\section{Incremental SVD compression for the Oldroyd model}\label{section 4}

We now integrate the incremental-SVD compression into the fully discrete Oldroyd solver. After computing a new velocity snapshot at each time step, we append that snapshot to the previously compressed history, update the SVD factors online, and use the compressed history in the assembly of the memory term at subsequent steps. The pressure is not stored in compressed form because it does not enter the convolution history term.

Relative to the standard fully discrete scheme, the only modification is that the history contribution is assembled from compressed velocity states. The compression routine consists of the $p$-truncation, no-truncation, and singular-value truncation steps described in Section~\ref{sec3}. The analysis uses two simple facts: $p$-truncation and no-truncation leave previously stored columns unchanged, whereas singular-value truncation perturbs them by at most a tolerance-dependent amount.

A convenient implementation can be summarized as follows.
\begin{enumerate}
\item Compute the first time-step solution with the standard scheme and compress the initial history $\{\bm u_h^0,\widehat{\bm u}_h^1\}$.
\item At time step $i+1$, assemble the memory term from the compressed history $\{\widetilde{\bm u}_h^{i,j}\}_{j=0}^{i}$, solve for $\widehat{\bm u}_h^{i+1}$, and update the SVD factors with the new snapshot.
\item Continue until the final time level $N$.
\end{enumerate}

With this notation, we seek $(\widehat {\bm{u}}_h^{n},\widehat{p}_h^{n}) \in \bm V_h\times Q_h$ such that

Based on the preceding discussion, we can present our formulation below, where we seek $(\widehat {\bm{u}}_h^{n},\widehat{p}_h^{n}) \in \bm V_h\times Q_h$ that satisfies the following equation:

\begin{align}\label{hat_equation-nonsingular}
	\begin{split}
	 &(d_t\widehat{\bm{u}}_h^n,\bm{v}_h)+\mathcal{A}(\bar{\widehat{\bm u}}_h^n,\bm{v}_h)+\mathcal{B}\!\left(\Delta t\sum_{j=1}^{n-1}K(\bar{t}_n-\bar{t}_j)\bar{\widetilde{\bm{u}}}_h^{n-1,j}+\frac{\Delta t}{2}K(0)\bar{\widehat{\bm{u}}}_h^n,\bm{v}_h\right)\\
	&\quad-\mathcal{D}(\bm{v}_h,\bar{\widehat{p}}_h^n)+\mathcal{C}(\bar{\widehat{\bm{u}}}_h^n,\bar{\widehat{\bm{u}}}_h^n,\bm{v}_h)=(\bm{f}(\bar{t}_n),\bm{v}_h),\\
	&\mathcal D(\bar{\widehat{\bm u}}_h^n,q_h)=0,\qquad \forall q_h\in Q_h.
	\end{split}
\end{align}

Here, $\left\{\widetilde{\bm u}_{i, j}\right\}_{j=0}^i$ represents the data that has been compressed from $\left\{\widetilde{\bm u}_{i-1,0}, \ldots\right.$, $\left.\tilde{\bm u}_{i-1, i-1}, \widehat{\bm u}_i\right\}$ using the incremental SVD algorithm, and 
\begin{align*}
	\bar{\widehat{\bm{u}}}_h^n=\frac{\widehat{\bm{u}}_h^n+\widehat{\bm{u}}_h^{n-1}}{2},\ 
	\bar{\widetilde{\bm{u}}}_h^{n-1,j}=\frac{\widetilde{\bm{u}}_h^{n-1,j}+\widetilde{\bm{u}}_h^{n-1,j-1}}{2}.
\end{align*} 
We assume that $Q_i, \Sigma_i, R_i$, and $W_i$ are the matrices associated with this compression process. In other words,
\begin{align*}
	\left[\widetilde{\bm u}_{i-1,0}|\cdots| \widetilde{\bm u}_{i-1, i-1} \mid \widehat{\bm u}_i\right] \stackrel{\text { Compress }}{\longrightarrow} Q_i\left[\Sigma_i R_i \mid W_i\right]^{\top}=\left[\widetilde{\bm u}_{i, 0}|\cdots| \widetilde{\bm u}_{i, i}\right].
\end{align*}
We need only update the matrix $ Q_i,\Sigma_i,R_i,W_i $ once we obtain new data from above process and consequently compress all available data into four new matrix, named still $ Q_i,\Sigma_i,R_i,W_i $.

We now examine the storage and work associated with the compressed history term. The factors $Q_i,\Sigma_i,R_i,W_i$ provide a history representation with storage cost $\mathcal O((m+n)r)$. The total work required to update the factors and assemble the history term over $n$ steps is
\begin{align*}
	\mathcal O(mnr)+\sum_{i=1}^{n}\sum_{j=1}^{i}\mathcal O(r)=\mathcal O(mnr+rn^2).
\end{align*}
Thus the compression reduces the dependence on the full spatial dimension from $m$ to the retained rank $r$, but it does \emph{not} remove the quadratic dependence on the number of time steps in direct history accumulation. This distinction is important for the theoretical positioning of the method.

\section{Error analysis for the compressed nonsingular-kernel scheme} \label{section 5}

In this section we estimate the error between the compressed scheme \eqref{hat_equation-nonsingular} and the exact solution of \eqref{OldryodModule-general}. The analysis is decomposed into three parts: the discretization error of the standard fully discrete scheme, the compression-induced perturbation between the standard and compressed schemes, and the resulting total error obtained by the triangle inequality.

For $\bm\psi\in\bm V$, let $\|\bm{\psi}\|_a^2=\mathcal{A}(\bm{\psi}, \bm{\psi})$. Then $\|\cdot\|_a$ defines a norm equivalent to $\|\cdot\|_1$ on $\bm V$ by \eqref{ass-A}. Throughout this section we assume that $\Omega$ is a bounded convex polyhedral domain and that the data of \eqref{OldryodModule-general} satisfy the following condition.

\begin{assumption}\label{assumption-1}
 Assume $\displaystyle K_0=\int_0^T K(t)\ {\rm d} t$ and $ K(t)\in H^2([0,T]) $,  and suppose  the following inequality to hold
\begin{align*}
		c_0K_0<1,
\end{align*}
					where $ c_0 $ is a constant such that 
					\begin{align*}
						|\mathcal{B}(\bm{u},\bm{v})|\le c_0\|\bm{u}\|_a\|\bm{v}\|_a,\ \forall \bm{u},\bm{v}\in \bm V.
					\end{align*}
\end{assumption}

				\begin{remark}
					We emphasize that \cref{assumption-1} ensures the dominance of the operator $ \mathscr{A} $ over the integral term. However, as elucidated further in  \Cref{singular kernel case}, our requirements necessitate solely $ K(t) \in  H^2
					[0, T] $ instead of assuming
					\cref{assumption-1}. This modified requirement holds if the kernel $ K(t) $ remains positive definite, and $ \mathscr{B}$ adheres to nonnegative symmetric properties. Our
					assumption, in this context, proves to be more encompassing and simpler to verify
					in comparison to the assumptions outlined in \cite{MR1686149}, where the prerequisites are more
					stringent to achieve a sharp decay rate.
				\end{remark}
			
Next, we deduce the error estimate between the solutions of \eqref{hat_equation-nonsingular} and \eqref{OldryodModule-general}. We divide this process into three parts. First, we establish the error bound between the solutions of \eqref{fully-discretization--nonsingular} and \eqref{OldryodModule-general}, then we deduce the error estimate between the solutions of \eqref{fully-discretization--nonsingular} and \eqref{hat_equation-nonsingular}. Finally, we apply triangle inequality to obtain our results.
\subsection{Convergence result of  fully discrete scheme \eqref{fully-discretization--nonsingular}}
In this subsection we record the stability and convergence properties of the standard fully discrete scheme \eqref{fully-discretization--nonsingular}. The result stated below is the baseline discretization estimate to which the compression error will later be added.
\begin{lemma}[Stability result]\label{stability--1}
Suppose that $ \bm{u}_h^n $ to be the solution of $ \eqref{fully-discretization--nonsingular} $  and  \Cref{assumption-1} to be satisfied, then it  holds 
	\begin{align*}
	\max_{1 \leq n \leq N}\|\bm{u}_h^n\|^2\le C\Delta t\sum_{n=1}^{N}\|\bm{f}(\bar{t}_n)\|^2+C\|\bm{u}_h^0\|^2.
	\end{align*}
\end{lemma}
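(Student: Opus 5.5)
We will use the standard energy argument for Crank--Nicolson: test \eqref{fully-discretization--nonsingular} with $\bm v_h=\bar{\bm u}_h^n$ and $q_h=\bar p_h^n$. Because $\mathcal D(\bar{\bm u}_h^n,q_h)=0$ for all $q_h\in Q_h$, the pressure term $\mathcal D(\bar{\bm u}_h^n,\bar p_h^n)$ vanishes. The convective term $\mathcal C(\bar{\bm u}_h^n,\bar{\bm u}_h^n,\bar{\bm u}_h^n)$ vanishes by skew-symmetry (\cref{trilinear-property}). Since $(d_t\bm u_h^n,\bar{\bm u}_h^n)=\frac{1}{2\Delta t}(\|\bm u_h^n\|^2-\|\bm u_h^{n-1}\|^2)$, we obtain
\begin{align*}
\frac{1}{2\Delta t}\big(\|\bm u_h^n\|^2-\|\bm u_h^{n-1}\|^2\big)+\|\bar{\bm u}_h^n\|_a^2+\mathcal B(\bar{\bm u}_{K,h}^{\Delta t,n},\bar{\bm u}_h^n)=(\bm f(\bar t_n),\bar{\bm u}_h^n).
\end{align*}
Multiplying by $2\Delta t$ and summing over $n=1,\dots,M$ telescopes the first term. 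This gives $\|\bm u_h^M\|^2+2\Delta t\sum_{n\le M}\|\bar{\bm u}_h^n\|_a^2$ on the left, and on the right the sums of the memory and forcing terms.

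The memory term is the main obstacle. We bound it using the continuity constant $c_0$ and \Cref{assumption-1}. First, $|\mathcal B(\bar{\bm u}_{K,h}^{\Delta t,n},\bar{\bm u}_h^n)|\le c_0\|\bar{\bm u}_{K,h}^{\Delta t,n}\|_a\|\bar{\bm u}_h^n\|_a$. Writing $\omega_{nj}=\Delta t K(\bar t_n-\bar t_j)$ for $j<n$ and $\omega_{nn}=\frac{\Delta t}{2}K(0)$, we have
\begin{align*}
\Delta t\sum_{n=1}^M c_0\sum_{j\le n}\omega_{nj}\|\bar{\bm u}_h^j\|_a\|\bar{\bm u}_h^n\|_a\le \frac{c_0}{2}\,\Delta t\sum_{n=1}^M\Big(\sum_{j\le n}\omega_{nj}\|\bar{\bm u}_h^n\|_a^2+\sum_{j\le n}\omega_{nj}\|\bar{\bm u}_h^j\|_a^2\Big).
\end{align*}
Interchanging sums, each term is bounded by $\frac{c_0}{2}\,\widetilde K_0\,\Delta t\sum_n\|\bar{\bm u}_h^n\|_a^2$. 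Here $\widetilde K_0$ is the maximal row or column sum of the weights, i.e.\ a discrete quadrature of $\int_0^T K$. Since $K\in H^2\subset C^1$, this discrete sum differs from $K_0$ by $\mathcal O(\Delta t)$. The delicate point is that the weights are evaluated at the midpoints $\bar t_n-\bar t_j=(n-j)\Delta t$, so we must check that the left-endpoint Riemann sum, together with the half weight at $j=n$, is within $C\Delta t\,\|K'\|_{L^\infty}$ of $K_0$. For $\Delta t$ small this gives $c_0\widetilde K_0\le c_0K_0+C\Delta t<1$, so the memory contribution is at most $c_0\widetilde K_0\,\Delta t\sum\|\bar{\bm u}_h^n\|_a^2$. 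It is therefore absorbed into the left-hand diffusion sum, leaving a positive multiple $2(1-c_0\widetilde K_0)\Delta t\sum\|\bar{\bm u}_h^n\|_a^2$. (For larger $\Delta t$ the argument would need a different bound, but the statement's constant $C$ may be taken for $\Delta t\le\Delta t_0$.)

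For the forcing term we use the Poincaré inequality together with norm equivalence, $\|\bar{\bm u}_h^n\|\le C_P\|\bar{\bm u}_h^n\|_a$. Young's inequality then gives $2\Delta t(\bm f(\bar t_n),\bar{\bm u}_h^n)\le \varepsilon\Delta t\|\bar{\bm u}_h^n\|_a^2+C_\varepsilon\Delta t\|\bm f(\bar t_n)\|^2$. Choosing $\varepsilon=1-c_0\widetilde K_0$ absorbs the velocity part into the remaining diffusion. We conclude
\begin{align*}
\|\bm u_h^M\|^2\le \|\bm u_h^0\|^2+C\Delta t\sum_{n=1}^N\|\bm f(\bar t_n)\|^2
\end{align*}
for every $M$, with $C$ depending on $C_P$, $c_0$ and $K_0$. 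Taking the maximum over $M$ yields the lemma without any Gronwall step.

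Alternatively, one could use Young's inequality in $L^2$ on the forcing term and close with a discrete Gronwall inequality. This only changes the constant to include $e^{T}$.
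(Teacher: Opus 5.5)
Your proposal is correct and follows the paper's proof. Both arguments test with $2\Delta t\,\bar{\bm u}_h^n$ and $\bar p_h^n$, use $\mathcal D(\bar{\bm u}_h^n,q_h)=0$ and $\mathcal C(\bm u,\bm v,\bm v)=0$, bound the memory term via $c_0$ and Young's inequality, absorb it into the diffusion using $c_0K_0<1$, and treat $\bm f$ by Poincar\'e and Young.

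Your handling of the discrete weight sums is more careful than the paper's. The paper replaces them by $K_0$ without justification, although for decreasing kernels such as $e^{-\delta t}$ the left-endpoint sum exceeds $\int_0^T K$; your restriction $\Delta t\le\Delta t_0$ is therefore a legitimate repair. You are also right that no Gronwall step is needed, even though the paper invokes one.
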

\begin{proof}
	We notice that  solutions $ (\bm{u}_h^n,p_h^n) $ satisfy equations:
	\begin{align*}
		(d_t\bm{u}_h^{n},\bm{v}_h)+
		\mathcal{A}(\bar{\bm{u}}_h^{n},\bm{v}_h)
		+\mathcal{B}(\bar{\bm u}_{\rho,h}^{\Delta t,n},\bm{v}_h)
		-&\mathcal{D}(\bm{v}_h,\bar p_h^{n})
		+\mathcal{C}(\bar{\bm{u}}_h^n,\bar{\bm{u}}_h^{n},\bm{v}_h)=(\bm{f}(\bar t_n),\bm{v}_h),\\
		&\mathcal{D}(\bar{\bm{u}}_h^{n},q_h)=0,
	\end{align*}
	for all $ (\bm{v}_h,q_h )\in V_h\times Q_h $. Taking $ (\bm{v}_h,q_h)=(2\Delta t\bar{\bm{u}}_h^n,\bar p_h^n) $ in the above equations, it follows that
	\begin{align*}
		\|\bm{u}_h^n\|^2-\|\bm{u}_h^{n-1}\|^2+2\Delta t\|\bar{\bm{u}}_h^n\|_a^2
		=
		-2\Delta tB(\bar{\bm{u}}_{\rho,h}^{\Delta t,n},\bar{\bm{u}}_h^n)+2\Delta t(\bm{f}(\bar{t}_n),\bar{\bm{u}}_h^n).
	\end{align*}
	By Young inequality and \Cref{assumption-1}, it follows that 
	\begin{align*}
		&\|\bm{u}_h^n\|^2-\|\bm{u}_h^{n-1}\|^2+2\Delta t\|\bar{\bm{u}}_h^n\|_a^2\\
		&\le 2c_0\Delta t^2\sum_{j=1}^{n}K(\bar{t}_n-\bar{t}_j)\|\bar{\bm{u}}_h^j\|_a
		\|\bar{\bm{u}}_h^n\|_a+2\Delta t\|\bm{f}(\bar{t}_n)\|\|\bar{\bm{u}}_h^n\|\\
		&\le c_0\Delta t^2\sum_{j=1}^{n}K(\bar{t}_n-\bar{t}_j)(\|\bar{\bm{u}}_h^j\|_a^2+
		\|\bar{\bm{u}}_h^n\|_a^2)+C\Delta t\|\bm{f}(\bar{t}_n)\|^2+\varepsilon\Delta t\|\bar{\bm{u}}_h^n\|_a^2,
	\end{align*}
	for some $ \varepsilon\in (0,1) $, which be specified later. Then we    sum up with respect to n from $ 1 $ to  $ N $ and  obtain that
	\begin{align*}
		&\|\bm{u}_h^N\|^2-\|\bm{u}_h^0\|^2+2\Delta t\sum_{n=1}^{N}\|\bar{\bm{u}}_h^n\|_a^2\\
		&\le c_0\Delta t^2\sum_{n=1}^{N}
		\sum_{j=1}^{n}K(\bar{t}_n-\bar{t}_j)(\|\bar{\bm{u}}_h^j\|_a^2+
		\|\bar{\bm{u}}_h^n\|_a^2)+
		C\Delta t\sum_{n=1}^{N}\|\bm{f}(\bar{t}_n)\|^2+
		\varepsilon\Delta t\sum_{n=1}^{N}\|\bar{\bm{u}}_h^n\|_a^2\\
		&\le (2c_0K_0+\varepsilon)\Delta t\sum_{n=1}^{N}\|\bar{\bm{u}}_h^n\|_a^2
		+C\Delta t\sum_{n=1}^{N}\|\bm{f}(\bar{t}_n)\|^2.
	\end{align*}
	Since $ c_0K_0<1  $, we choose some $ \varepsilon\in (0,1) $ such that $ 2c_0K_0+\varepsilon<2 $ and apply the Gronwall's inequality, then   the equality becomes 
	\begin{align*}
		\max_{1 \leq n \leq N}\|\bm{u}_h^n\|^2\le C\Delta t\sum_{n=1}^{N}\|\bm{f}(\bar{t}_n)\|^2+C\|\bm{u}_h^0\|^2.
	\end{align*}
\end{proof}

\begin{lemma}[Convergence result]\label{error estimate-fem--1}

Suppose that $\bm u_h^n$ and $\bm u(t_n)$ are the solutions of \eqref{fully-discretization--nonsingular} and \eqref{OldryodModule-general}, respectively, and that \Cref{assumption-1} holds. Assume further that
\[
\bm u\in H^2(0,T;L^2(\Omega)^d)\cap L^\infty(0,T;H^{k+1}(\Omega)^d),\qquad
p\in L^\infty(0,T;H^{l+1}(\Omega)).
\]
Then, for all $1\le n\le N$,
\begin{align*}
	\|\bm{u}(t_n)-\bm{u}_h^n\|\le C\bigl(h^{\min\{k+1,l+1\}}+\Delta t^2\bigr).
\end{align*}

\end{lemma}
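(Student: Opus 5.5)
We split the error with the projections fixed in the preliminaries: $\bm u(t_n)-\bm u_h^n=(\bm u(t_n)-\bm P_h\bm u(t_n))+(\bm P_h\bm u(t_n)-\bm u_h^n)=:\bm\eta^n+\bm\theta^n$. The approximation property of $\bm P_h$ gives $\|\bm\eta^n\|\le Ch^{k+1}$, so everything reduces to bounding $\bm\theta^n\in\bm W_h$. The initial datum is $\bm u_h^0=\bm P_h\bm u_0$, hence $\bm\theta^0=0$.

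We evaluate the variational formulation \eqref{varitional-formula} at $\bar t_n$ and subtract \eqref{fully-discretization--nonsingular}. Test functions $\bm v_h\in\bm W_h$ remove the discrete pressure. The exact pressure is replaced by $p-\rho_hp$, which contributes $\mathcal D(\bm v_h,p(\bar t_n)-\rho_hp(\bar t_n))=O(h^{l+1})\|\bm v_h\|_1$ after one integration by parts. This yields an error equation of the form
\begin{align*}
(d_t\bm\theta^n,\bm v_h)+\mathcal A(\bar{\bm\theta}^n,\bm v_h)+\mathcal B(\bar{\bm\theta}^{\Delta t,n}_K,\bm v_h)+\mathcal C(\bar{\bm u}_h^n,\bar{\bm u}_h^n,\bm v_h)-\mathcal C(\bm u(\bar t_n),\bm u(\bar t_n),\bm v_h)=\sum_{i}(\bm R_i^n,\bm v_h),
\end{align*}
where the residuals $\bm R_i^n$ collect the following terms:
\begin{itemize}
\item the Crank--Nicolson truncation errors $d_t\bm u(t_n)-\bm u_t(\bar t_n)$ and $\bar{\bm u}(t_n)-\bm u(\bar t_n)$, which are $O(\Delta t^2)$ under $\bm u\in H^2(0,T;L^2)$ (if needed, the extra temporal regularity is absorbed into $C$);
\item the midpoint-quadrature error of the memory term, which is $O(\Delta t^2)$ because $K\in H^2$ (\Cref{assumption-1}), by standard Peano-kernel estimates;
\item the projection terms in $d_t\bm\eta^n$, $\bar{\bm\eta}^n$ and the memory of $\bm\eta$, which are $O(h^{k+1})$ in $L^2$ or the dual norm;
\item the pressure term above.
\end{itemize}

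Next we take $\bm v_h=2\Delta t\bar{\bm\theta}^n$ exactly as in the proof of \Cref{stability--1}. This gives $\|\bm\theta^n\|^2-\|\bm\theta^{n-1}\|^2+2\Delta t\|\bar{\bm\theta}^n\|_a^2$ on the left. The memory term is treated as in that proof: after summation over $n$ it is bounded by $(c_0K_0+\varepsilon)\Delta t\sum\|\bar{\bm\theta}^n\|_a^2$ and absorbed using $c_0K_0<1$. The residuals are handled with Cauchy--Schwarz and Young, leaving $\varepsilon\Delta t\|\bar{\bm\theta}^n\|_a^2+C\Delta t(h^{2\min\{k+1,l+1\}}+\Delta t^4)$.

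The main obstacle is the nonlinear term. We write it as
\begin{align*}
\mathcal C(\bar{\bm u}_h^n,\bar{\bm u}_h^n,\cdot)-\mathcal C(\bm u(\bar t_n),\bm u(\bar t_n),\cdot)=-\mathcal C(\bar{\bm e}^n,\bar{\bm u}_h^n,\cdot)-\mathcal C(\bm u(\bar t_n),\bar{\bm e}^n,\cdot)+\text{(time-consistency terms)},
\end{align*}
with $\bar{\bm e}^n=\bar{\bm\theta}^n+\bar{\bm\eta}^n$. Testing with $\bar{\bm\theta}^n$ and using skew-symmetry from \Cref{trilinear-property} kills $\mathcal C(\bar{\bm u}_h^n,\bar{\bm\theta}^n,\bar{\bm\theta}^n)$, so it is best to rewrite the first term with $\bar{\bm u}_h^n$ as the convecting field. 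The remaining term $\mathcal C(\bar{\bm\theta}^n,\bm u(\bar t_n),\bar{\bm\theta}^n)$ is bounded through the fourth estimate of \Cref{trilinear-property}, using $\bm u\in L^\infty(H^{k+1})\subset L^\infty(W^{1,\infty})$ or the $\|\cdot\|^{1/2}\|\nabla\cdot\|^{1/2}$ interpolation. This gives $\le\varepsilon\|\bar{\bm\theta}^n\|_a^2+C\|\bar{\bm\theta}^n\|^2$. The $\bar{\bm\eta}^n$ terms are bounded by $C\|\nabla\bm u\|\,h^{k}\cdots$. To get the optimal power $h^{k+1}$ from them, we apply the fourth estimate with the $L^2$ norm on $\bm\eta$, or a duality/Stokes-projection argument, together with a uniform bound on $\|\nabla\bar{\bm u}_h^n\|$. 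That uniform bound would come from an inverse estimate combined with an induction hypothesis $\|\bm\theta^{n-1}\|\le h$, closed under a mild condition such as $\Delta t\le Ch^{d/4}$ or similar.

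Finally we sum over $n$ and apply the discrete Gronwall inequality for $\Delta t$ small. This gives $\max_n\|\bm\theta^n\|\le C(h^{\min\{k+1,l+1\}}+\Delta t^2)$, and the triangle inequality completes the proof.
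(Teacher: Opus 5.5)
\textbf{Gap 1: the $h^{k+1}$ rate does not follow from your argument.} Your skeleton matches the paper's: test with $2\Delta t\bar{\bm\theta}^n$, absorb the discrete memory term after summation using $c_0K_0<1$, then apply Gronwall. The failure is in the projection terms. You fix the generic divergence-preserving $\bm P_h$ from the preliminaries, which carries only an approximation property. With it, $\mathcal A(\bar{\bm\eta}^n,\bar{\bm\theta}^n)$ and $\mathcal B(\bar{\bm\eta}^{\Delta t,n}_K,\bar{\bm\theta}^n)$ are not $O(h^{k+1})$ ``in $L^2$ or the dual norm''. As functionals of $\bm v_h$ they are bounded only by $C\|\bar{\bm\eta}^n\|_1\|\bm v_h\|_a\sim h^k\|\bm v_h\|_a$. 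You cannot integrate by parts to move the derivative off $\bar{\bm\eta}^n$, because $\bm v_h$ is only in $H^1$. Young plus Gronwall therefore give only $\|\bm\theta^n\|=O(h^k)$.

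The missing idea is to choose the projection so that these terms cancel. You invoke a Stokes projection only for the convection terms. Even there it would not suffice: since $\mathcal B\ne\mathcal A$, a plain Stokes projection still leaves the memory term at $O(h^k)$. What is needed is a Stokes--Volterra (Ritz--Volterra) projection $(\bm R_h\bm u,\Pi_hp)$, defined by
$\mathcal A(\bm u-\bm R_h\bm u,\bm v_h)+\int_0^tK(t-s)\mathcal B((\bm u-\bm R_h\bm u)(s),\bm v_h)\,\mathrm{d}s-\mathcal D(\bm v_h,p-\Pi_hp)=0$ and $\mathcal D(\bm u-\bm R_h\bm u,q_h)=0$.
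Its $L^2$ error, and that of its time derivative, is $O(h^{k+1})$ by an Aubin--Nitsche argument using Stokes regularity on the convex domain. With it, the $\mathcal A$, memory and pressure contributions of $\bm\eta$ cancel up to $O(\Delta t^2)$ quadrature remainders. Only $(d_t\bm\eta^n,\cdot)=O(h^{k+1})$ in $L^2$ survives.

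The paper's proof does not supply this step either. Its bounds for $R_9$, $R_{10}$ and $T_3$ carry $h^{2k}$, so as written it yields only $h^{\min\{k,l+1\}}+\Delta t^2$.

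\textbf{Gap 2: the convection step.} A uniform bound on $\|\nabla\bar{\bm u}_h^n\|$ does not extract $h^{k+1}$ from $\mathcal C(\bar{\bm u}_h^n,\bar{\bm\eta}^n,\bar{\bm\theta}^n)$. The fourth estimate of \Cref{trilinear-property} leaves either a factor $\|\nabla\bar{\bm\eta}^n\|\sim h^k$ or at best $\|\bar{\bm\eta}^n\|^{1/2}\|\nabla\bar{\bm\eta}^n\|^{1/2}\sim h^{k+1/2}$. In addition, your inverse-estimate induction imposes a step-size restriction that is not in the statement.

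Neither device is needed. Substitute $\bar{\bm u}_h^n=\bar{\bm u}^n-\bar{\bm\eta}^n-\bar{\bm\theta}^n$ in every slot and expand; the terms behave as follows.
\begin{itemize}
\item $\mathcal C(\bar{\bm u}^n,\bar{\bm\theta}^n,\bar{\bm\theta}^n)$, $\mathcal C(\bar{\bm\eta}^n,\bar{\bm\theta}^n,\bar{\bm\theta}^n)$ and $\mathcal C(\bar{\bm\theta}^n,\bar{\bm\theta}^n,\bar{\bm\theta}^n)$ vanish by skew-symmetry.
\item $\mathcal C(\bar{\bm u}^n,\bar{\bm\eta}^n,\bar{\bm\theta}^n)$ and $\mathcal C(\bar{\bm\eta}^n,\bar{\bm u}^n,\bar{\bm\theta}^n)$ are $O(h^{k+1})\|\bar{\bm\theta}^n\|_1$. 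This uses $\nabla\cdot\bm u=0$, $\bm u\in L^\infty$ and $\nabla\bm u\in L^3$, all of which hold for $\bm u\in H^2$.
\item $\mathcal C(\bar{\bm\eta}^n,\bar{\bm\eta}^n,\bar{\bm\theta}^n)=O(h^{2k})\|\nabla\bar{\bm\theta}^n\|$.
\item $\mathcal C(\bar{\bm\theta}^n,\bar{\bm u}^n,\bar{\bm\theta}^n)+\mathcal C(\bar{\bm\theta}^n,\bar{\bm\eta}^n,\bar{\bm\theta}^n)\le\varepsilon\|\nabla\bar{\bm\theta}^n\|^2+C\|\bar{\bm\theta}^n\|^2$, since $\|\nabla\bar{\bm\eta}^n\|\le C$.
\end{itemize}
So no a priori bound on $\bm u_h$ is required at all. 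The paper uses the stability lemma at this point, which is enough only for its $h^k$-level bound.

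Note also that your embedding $H^{k+1}\subset W^{1,\infty}$ fails for $k=1$, the Mini element used in the experiments, since $H^2\not\subset W^{1,\infty}$. The $L^\infty$/$L^3$ bounds above avoid this.

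\textbf{Regularity.} Evaluating the equation at $\bar t_n$ makes $d_t\bm u^n-\bm u_t(\bar t_n)$ depend on $\bm u_{ttt}$. The paper instead integrates over $[t_{n-1},t_n]$, so $d_t\bm u^n$ appears exactly. It pays for this with the averaging identities of \Cref{integration by parts}. Either route uses more time regularity than $H^2(0,T;L^2)$.
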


\begin{remark}

The estimate in \Cref{error estimate-fem--1} is the reference discretization bound used in the rest of the paper. For specific stable pairs, including the Mini element used in Section~\ref{section 7}, the observed spatial rate may be sharper than the generic statement above. The numerical results should therefore be interpreted as empirical validation rather than as evidence that the theorem is sharp in every regime.

\end{remark}

To  describe proof process better, we introduce throughout the following definition:
\begin{align*}
	&\bm{u}^n=\bm{u}(t_n),\  \bm{e}_n=\bm{u}_h^n-\bm{u}^n,\
	\bm{\theta}_n=\bm{u}_h^n-\bm P_h\bm{u}^n,\ \bm{\xi}_n=\bm P_h\bm{\bm{u}}^n-\bm{u}^n,\\
	&\bar{X}_K^{\Delta t,n}=\Delta t\sum_{j=1}^{n-1}K(\bar{t}_n-\bar{t}_j)\bar{X}_j+\frac{\Delta t}{2}K(0)\bar{X}_n,\ \text{where} \  X=\bm{u},\ \bm{\theta}, \ \text{or}\  \bm{\xi}.
\end{align*}
The following lemma is  frequently used in the proof of error estimation. 
\begin{lemma}
	\cite[Lemma 4]{GuoYingwen2022Ceaf}\label{integration by parts}
	Using the integration by parts, then for all $\varphi \in H^2([0,t])$, there hold
	\begin{align*}
		& \bar{\varphi}\left(t_n\right)-\frac{1}{\Delta t} \int_{t_{n-1}}^{t_n} \varphi(t)\ {\rm{d}}t=\frac{1}{2\Delta t} \int_{t_{n-1}}^{t_n}\left(t-t_{n-1}\right)\left(t_n-t\right) \varphi_{t t}(t)\ {\rm{d}}t \\
		& \bar{\varphi}\left(t_n\right)-\varphi\left(\bar{t}_n\right)=\frac{1}{2} \int_{t_{n-1}}^{\bar{t}_n}\left(t-t_{n-1}\right) \varphi_{t t}\ {\rm{d}}t+\frac{1}{2} \int_{\bar{t}_n}^{t_n}\left(t_n-t\right) \varphi_{t t}\ {\rm{d}}t, \\
		& \frac{\varphi\left(\bar{t}_n\right)}{2}-\frac{1}{\Delta t} \int_{\bar{t}_n}^{t_n} \varphi(t)\ {\rm{d}}t=-\frac{1}{\Delta t} \int_{\bar{t}_n}^{t_n}\left(t_n-t\right) \varphi_t(t)\ {\rm{d}}t.
	\end{align*}
\end{lemma}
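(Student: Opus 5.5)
The plan is to verify all three identities by elementary one-dimensional integration by parts, or equivalently Taylor expansion with integral remainder, on the interval $[t_{n-1},t_n]$. Here $\bar\varphi(t_n)=\tfrac12(\varphi(t_n)+\varphi(t_{n-1}))$, consistent with the notation $\bar{\bm u}_h^n$. Since $H^2([0,t])\hookrightarrow C^1([0,t])$ in one dimension, $\varphi$ and $\varphi_t$ are absolutely continuous. Hence every boundary evaluation below is meaningful, and integration by parts against $\varphi_{tt}\in L^2$ is legitimate. Alternatively, one proves the identities for smooth $\varphi$ and passes to the limit by density, because both sides are continuous in the $H^2$ norm.

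For the first identity, set $w(t)=(t-t_{n-1})(t_n-t)$. This weight satisfies $w(t_{n-1})=w(t_n)=0$, $w'(t)=t_n+t_{n-1}-2t$ and $w''=-2$. Integrating $\int_{t_{n-1}}^{t_n}w\varphi_{tt}\,{\rm d}t$ by parts once removes the boundary term, since $w$ vanishes at both endpoints, and leaves $-\int w'\varphi_t\,{\rm d}t$. Integrating by parts a second time gives
\[
-\bigl[w'\varphi\bigr]_{t_{n-1}}^{t_n}+\int_{t_{n-1}}^{t_n} w''\varphi\,{\rm d}t.
\]
Using $w'(t_n)=-\Delta t$ and $w'(t_{n-1})=\Delta t$, this equals $\Delta t\bigl(\varphi(t_n)+\varphi(t_{n-1})\bigr)-2\int_{t_{n-1}}^{t_n}\varphi\,{\rm d}t$. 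Dividing by $2\Delta t$ yields exactly the first identity.

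For the second identity, expand around the midpoint $\bar t_n$ with integral remainder:
\begin{align*}
\varphi(t_n)&=\varphi(\bar t_n)+\tfrac{\Delta t}{2}\varphi_t(\bar t_n)+\int_{\bar t_n}^{t_n}(t_n-t)\varphi_{tt}\,{\rm d}t,\\
\varphi(t_{n-1})&=\varphi(\bar t_n)-\tfrac{\Delta t}{2}\varphi_t(\bar t_n)+\int_{t_{n-1}}^{\bar t_n}(t-t_{n-1})\varphi_{tt}\,{\rm d}t.
\end{align*}
Each of these is itself one integration by parts of the remainder integral. Averaging the two lines cancels the first-derivative terms and gives the claimed formula for $\bar\varphi(t_n)-\varphi(\bar t_n)$.

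For the third identity, integrate $\int_{\bar t_n}^{t_n}(t_n-t)\varphi_t\,{\rm d}t$ by parts once. The boundary term is $\bigl[(t_n-t)\varphi\bigr]_{\bar t_n}^{t_n}=-\tfrac{\Delta t}{2}\varphi(\bar t_n)$, and the remaining term is $+\int_{\bar t_n}^{t_n}\varphi\,{\rm d}t$. Multiplying by $-1/\Delta t$ gives precisely $\tfrac12\varphi(\bar t_n)-\tfrac1{\Delta t}\int_{\bar t_n}^{t_n}\varphi\,{\rm d}t$.

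There is no real obstacle. The only points needing care are the signs of $w'$ at the endpoints in the first identity, and the regularity justification for using pointwise values of $\varphi_t$, which the embedding $H^2\subset C^1$ settles.
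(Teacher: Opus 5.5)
Your proof is correct and takes the same route as the paper: the paper gives no proof beyond citing \cite{GuoYingwen2022Ceaf} and invoking integration by parts, and your computations match that. In particular, the endpoint values $w'(t_n)=-\Delta t$ and $w'(t_{n-1})=\Delta t$, the midpoint Taylor expansions, and the boundary term $-\tfrac{\Delta t}{2}\varphi(\bar t_n)$ all check out, and the embedding $H^2\hookrightarrow C^1$ in one dimension is the right regularity justification.
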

Then we give detailed proof of \Cref{error estimate-fem--1}.
\begin{proof}
		We notice that $ \bm{u}(t) $ to satisfy equation \eqref{varitional-formula}, integrating \eqref{varitional-formula} from $ t_{n-1} $ to $ t_n $ to obtain 
		\begin{align}\label{integration--1}
			&(d_t\bm{u}^n,\bm{v}_h)+\frac{1}{\Delta t}\int_{t_{n-1}}^{t_n}\mathcal{A}(\bm{u}(t),\bm{v}_h)\ {\rm{d}}t+\mathcal{B}(\frac{1}{\Delta t}\int_{t_{n-1}}^{t_n}\int_0^tK(t-s)\bm{u}(s) {\rm{d}}s\ {\rm{d}}t,\bm{v}_h)\notag\\
			&+\frac{1}{\Delta t}\int_{t_{n-1}}^{t_n}\mathcal{C}(\bm{u}(t),\bm{u}(t),\bm{v}_h) {\rm{d}}t-\frac{1}{\Delta t}\int_{t_{n-1}}^{t_n}\mathcal{D}(\bm{v}_h,p(t))\ {\rm{d}}t=\frac{1}{\Delta t}\int_{t_{n-1}}^{t_n}(\bm{f}(t),\bm{v}_h)\ {\rm{d}}t.
		\end{align}
		Subtracting  \eqref{integration--1} from \eqref{fully-discretization--nonsingular}, it follows that 
		\begin{align}\label{error-equation(fem)--1}
			\begin{split}
				&(d_t \bm{e}_n,\bm{v}_h)+\mathcal{A}(\bar{\bm{e}}_n,\bm{v}_h)+\mathcal{B}(\bar{\bm{e}}_{K}^{\Delta t,n},\bm{v}_h)=\frac{1}{\Delta t}\int_{t_{n-1}}^{t_n}\mathcal{A}(\bm{u}(t),\bm{v}_h)\ {\rm{d}}t-\mathcal{A}(\bar{\bm{u}}^n,\bm{v}_h)\\
				&+\frac{1}{\Delta t}\mathcal{B}(\int_{t_{n-1}}^{t_n}\int_0^tK(t-s)\bm{u}(s)\ {\rm{d}}s\ {\rm{d}}t,\bm{v}_h)-\mathcal{B}(\bar{\bm{u}}_k^{\Delta t,n},\bm{v}_h)+\frac{1}{\Delta t}\int_{t_{n-1}}^{t_n}\mathcal{C}(\bm{u}(t),\bm{u}(t),\bm{v}_h)\ {\rm{d}}t\\
				&-\mathcal{C}(\bar{\bm{u}}_h^n,\bar{\bm{u}}_h^n,\bm{v}_h)\ {\rm{d}}t+\mathcal{D}(\bm{v}_h,\bar{p}_h^n-\frac{1}{\Delta t}\int_{t_{n-1}}^{t_n}p(t)\ {\rm{d}}t)+(\bm{f}(\bar{t}_n)-\frac{1}{\Delta t}\int_{t_{n-1}}^{t_n}\bm{f}(t)\ {\rm{d}}t,\bm{v}_h).
			\end{split}
		\end{align}
		Noting that $ \bm{e}_n=\bm{\theta}_n+\bm{\xi}_n $ and using the property of projection $ \bm{P}_h $, then \eqref{error-equation(fem)--1} becomes 
		\begin{align}\label{error-equation(fem)--2}
			\begin{split}
				&(d_t \bm{\theta}_n,\bm{v}_h)+\mathcal{A}(\bar{\bm{\theta}}_n,\bm{v}_h)+\mathcal{B}(\bar{\bm{\theta}}_{K}^{\Delta t,n},\bm{v}_h)
				=\frac{1}{\Delta t}\int_{t_{n-1}}^{t_n}\mathcal{A}(\bm{u}(t),\bm{v}_h)\ {\rm{d}}t-\mathcal{A}(\bar{\bm{u}}^n,\bm{v}_h)\\
				&+\frac{1}{\Delta t}\mathcal{B}(\int_{t_{n-1}}^{t_n}\int_0^tK(t-s)\bm{u}(s)\ {\rm{d}}s\ {\rm{d}}t,\bm{v}_h)-\mathcal{B}(\bar{\bm{u}}_k^{\Delta t,n},\bm{v}_h)+\frac{1}{\Delta t}\int_{t_{n-1}}^{t_n}\mathcal{C}(\bm{u}(t),\bm{u}(t),\bm{v}_h)\ {\rm{d}}t\\
				&-\mathcal{C}(\bar{\bm{u}}_h^n,\bar{\bm{u}}_h^n,\bm{v}_h)\ +\mathcal{D}(\bm{v}_h,\bar{p}_h^n-\frac{1}{\Delta t}\int_{t_{n-1}}^{t_n}p(t)\ {\rm{d}}t)+(\bm{f}(\bar{t}_n)-\frac{1}{\Delta t}\int_{t_{n-1}}^{t_n}\bm{f}(t)\ {\rm{d}}t,\bm{v}_h)\\
				&-\mathcal{A}(\bar{\bm{\xi}}_n,\bm{v}_h)-\mathcal{B}(\bar{\bm{\xi}}_K^{\Delta t,n},\bm{v}_h).
			\end{split}
		\end{align}
		We take $ \bm{v}_h=2\Delta t\bar{\bm{\theta}}_n $ in \eqref{error-equation(fem)--2} to arrive at 
		\begin{align*}
			&\|\bm{\theta}_n\|^2-\|\bm{\theta}_{n-1}\|^2+2\Delta t\|\bar{\bm{\theta}}_n\|_a^2+2\Delta t\mathcal{B}(\bar{\bm{\theta}}_K^{\Delta t,n},\bar{\bm{\theta}}_n)\\
			&=2\int_{t_{n-1}}^{t_n}\mathcal{A}(\bm{u}(t),\bar{\bm{\theta}}_n)\ {\rm{d}}t-2\Delta t\mathcal{A}(\bar{\bm{u}}^n,\bar{\bm{\theta}}_n)\\
			&+2\mathcal{B}(\int_{t_{n-1}}^{t_n}\int_{0}^{t}K(t-s)\bm{u}(s)\ {\rm{d}}s\ {\rm{d}}t,\bar{\bm{\theta}}_n)
			-2\Delta t\mathcal{B}(\bar{\bm{u}}_{K}^{\Delta t,n},\bar{\bm{\theta}}_n)\\
			&+2\int_{t_{n-1}}^{t_n}\mathcal{C}(\bm{u}(t),\bm{u}(t),\bar{\bm{\theta}}_n)\ {\rm{d}}t-2\Delta t\mathcal{C}(\bar{\bm{u}}_h^n,\bar{\bm{u}}_h^n,\bar{\bm{\theta}}_n)\\
			&+2\Delta t\mathcal{D}(\bar{\bm{\theta}}_n,\bar{p}_h^n-\frac{1}{\Delta t}\int_{t_{n-1}}^{t_n}p(t)\ {\rm{d}}t)
			+2\Delta t(\bm{f}(\bar t_n)-\frac{1}{\Delta t}\int_{t_{n-1}}^{t_n}\bm{f}(t)\ {\rm{d}}t,\bar{\bm{\theta}}_n)\\
			&-2\Delta t\mathcal{A}(\bar{\bm{\xi}}_n,\bar{\bm{\theta}}_n)-2\Delta t\mathcal{B}(\bar{\bm{\xi}}_K^{\Delta t,n},\bar{\bm{\theta}}_n)\\
			&=\sum_{i=1}^{10}R_i.
		\end{align*}
		Next, we turn to estimate the right  terms $\displaystyle \{R_i\}_{i=1}^{10} $. For the terms $ R_1+R_2 $, $R_8 $, we use \Cref{integration by parts}  and Young inequality to get
		\begin{align*}
			R_1+R_2&=2\Delta t\mathcal{A}(\frac{1}{\Delta t}\int_{t_{n-1}}^{t_n}\bm{u}(t)\ {\rm{d}}t-\bar{\bm{u}}^n,\bar{\bm{\theta}}_n)\\
			&=-\mathcal{A}(\int_{t_{n-1}}^{t_n}(t-t_{n-1})(t_n-t)\bm{u}_{tt}(t)\ {\rm{d}}t,\bar{\bm{\theta}}_n)\\
			&\le C\Delta t^{5/2}(\int_{t_{n-1}}^{t_n}\|\bm{u}_{tt}(t)\|_a^2\ {\rm{d}}t)^{1/2}\|\bar{\bm{\theta}}_n\|_a\\
			&\le C\Delta t^4\int_{t_{n-1}}^{t_n}\|\bm{u}_{tt}(t)\|_a^2\ {\rm{d}}t+\frac{\mu\Delta t}{7}\|\bar{\bm{\theta}}_n\|_a^2,\\
			R_8&=(\int_{t_{n-1}}^{t_n}(t-t_{n-1})(t_n-t)\bm{f}_{tt}(t)\ {\rm{d}}t,\bar{\bm{\theta}}_n)\\
			&-\Delta t(\int_{t_{n-1}}^{\bar{t}_n}(t-t_{n-1})\bm{f}_{tt}(t)\ {\rm{d}}t+\int_{\bar t_{n}}^{t_n}(t_n-t)\bm{f}_{tt}(t)\ {\rm{d}}t,\bar{\bm{\theta}}_n)\\
			&\le C\Delta t^{5/2}(\int_{t_{n-1}}^{t_n}\|\bm{f}_{tt}(t)\|^2\ {\rm{d}}t)^{1/2}\|\bar{\bm{\theta}}_n\|\\
			&\le C\Delta t^{4}\int_{t_{n-1}}^{t_n}\|\bm{f}_{tt}(t)\|^2\ {\rm{d}}t+\frac{\mu\Delta t}{7}\|\bar{\bm{\theta}}_n\|_a^2.	
		\end{align*}
	For $ R_7 $, noticing $ \bm{P}_h:\bm{V}\to \bm{W}_h $, one has
	\begin{align*}
		\mathcal{D}(\bar{\bm{\theta}}_n,q_h)=\mathcal{D}(\bar{\bm{u}}_h^n,q_h)-\mathcal{D}(\bm{P}_h\bar{\bm{u}}^n,q_h)=0, \quad \forall \ q_h\ \in Q_h.
\end{align*}
thus
\begin{align*}
		R_7&=2\Delta t\mathcal{D}(\bar{\bm{\theta}}_n,\bar{p}_h^n-\frac{1}{\Delta t}\int_{t_{n-1}}^{t_n}p(t)\ {\rm{d}}t)\\
	&=2\Delta t\mathcal{D}(\bar{\bm{\theta}}_n,\rho_h(\frac{1}{\Delta t}\int_{t_{n-1}}^{t_n}p(t)\ {\rm{d}}t)-\frac{1}{\Delta t}\int_{t_{n-1}}^{t_n}p(t)\ {\rm{d}}t)\\
	&\le Ch^{l+1}\Delta t\|\bar{\bm{\theta}}_n\|_a|\frac{1}{\Delta t}\int_{t_{n-1}}^{t_n}p(t)\ {\rm{d}}t|_{l+1}\\
	&\le \frac{\mu\Delta t}{7}\|\bar{\bm{\theta}}_n\|_a^2+Ch^{2l+2}\int_{t_{n-1}}^{t_n}|p(t)|_{l+1}^2\ {\rm{d}}t.
\end{align*}

		Interpolation approximation property and Young inequality applied to $ R_9 $ and $ R_{10} $ to obtain
		\begin{align*}
			R_9&\le C\Delta t\|\bar{\bm{\xi}}_n\|_a\|\bar{\bm{\theta}}_n\|_a
		\le C\Delta th^{2k}|\bm{u}^n|_{k+1}^2+\frac{\mu\Delta t}{7}\|\bar{\bm{\theta}}_n\|_a^2,\\
			R_{10}&=-2\Delta t\mathcal{B}(\Delta t\sum_{j=1}^{n-1}K(\bar{t}_n-\bar{t}_j)\bar{\bm{\xi}}_j+\frac{K(0)}{2}\Delta t\bar{\bm{\xi}}_n,\bar{\bm{\theta}}_n)\\
			&\le C\Delta t^2\sum_{j=1}^{n}K(\bar{t}_n-\bar{t}_j)\|\bar{\bm{\xi}}_j\|_a\|\bar{\bm{\theta}}_n\|_a\\
			&\le C\Delta th^{2k}\max_{1 \leq j \leq n}|\bm{u}^j|_{k+1}^2+\frac{\mu\Delta t}{7}\|\bar{\bm{\theta}}_n\|_a^2.
		\end{align*}
		As for $ R_3+R_4 $, we rewrite them as follows:
		\begin{align*}
			R_3+R_4&=2\Delta t\mathcal{B}(\frac{1}{\Delta t}\int_{t_{n-1}}^{t_n}\int_0^tK(t-s)\bm{u}(s)\ {\rm{d}}s\ {\rm{d}}t-\int_0^{\bar{t}_n}K(\bar{t}_n-s)\bm{u}(s)\ {\rm{d}}s,\bar{\bm{\theta}}_n)\\
			&+2\Delta t\sum_{j=1}^{n-1}\mathcal{B}(\int_{t_{j-1}}^{t_j}K(\bar{t}_n-s)\bm{u}(s)\ {\rm{d}}s-\Delta tK(\bar{t}_n-\bar{t}_j)\bm{u}(\bar{t}_j),\bar{\bm{\theta}}_n)\\
			&+2\Delta t\mathcal{B}(\int_{t_{n-1}}^{\bar{t}_n}K(\bar{t}_n-s)\bm{u}(s)\ {\rm{d}}s-\frac{\Delta t}{2}K(0)\bm{u}(\bar{t}_n),\bar{\bm{\theta}}_n)\\
			&+2\Delta t^2\sum_{j=1}^{n-1}K(\bar{t}_n-\bar{t}_j)\mathcal{B}(\bm u(\bar{t}_j)-\bar{\bm{u}}_j,\bar{\bm{\theta}}_n)+\Delta t^2K(0)\mathcal{B}(\bm{u}(\bar{t}_n)-\bar{\bm{u}}^n,\bar
			{\bm{\theta}}_n)\\
			&=\sum_{i=1}^{5}S_i.
		\end{align*}
		Then, we use the same way to estimate the terms $ \{S_i\}_{i=1}^5 $. Let 
		\begin{align*}
			F(t)=\int_0^tK(t-s)\bm{u}(s)\ {\rm{d}}s,\qquad\ G(t)=K(\bar{t}_n-t)\bm{u}(t),
		\end{align*}
		applying \Cref{integration by parts} and Young inequality, it follows that 
			\begin{align*}
				S_1&=-\mathcal{B}(\int_{t_{n-1}}^{t_n}(t-t_{n-1})(t_n-t)F_{tt}\ {\rm{d}}t,\bar{\bm{\theta}}_n)
				\le C\Delta t^{5/2}(\int_{t_{n-1}}^{t_n}\|F_{tt}\|_a^2\ {\rm{d}}t)^{1/2}\|\bar{\bm{\theta}}_n\|_a^2\\
				&\le C\Delta t^4\int_{t_{n-1}}^{t_n}\|F_{tt}\|_a^2\ {\rm{d}}t+\frac{\mu\Delta t}{35}\|\bar{\bm{\theta}}_n\|_a^2,\\
				S_2&=-\Delta t\sum_{j=1}^{n-1}\mathcal{B}(\int_{t_{j-1}}^{t_j}(t-t_{j-1})(t_j-t)G_{tt}\ {\rm{d}}t,\bar{\bm{\theta}}_n)
				\le C\Delta t\sum_{j=1}^{n-1}\Delta t^{5/2}(\int_{t_{j-1}}^{t_j}\|G_{tt}\|_a^2\ {\rm{d}}t)^{1/2}\|\bar{\bm{\theta}}_n\|_a\\
				&\le C\Delta t^{5}\int_{0}^{t_{n-1}}\|G_{tt}\|_a^2\ {\rm{d}}t +\frac{\mu\Delta t}{35}\|\bar{\bm{\theta}}_n\|_a^2,
			\end{align*}
		and
			\begin{align*}
				S_3&=-2\Delta t\mathcal{B}(\int_{t_{n-1}}^{\bar{t}_n}(t-t_{n-1})G_t\ {\rm{d}}t,\bar{\bm{\theta}}_n)
				\le C\Delta t^{5/2}(\int_{t_{n-1}}^{t_n}\|G_t\|_a^2\ {\rm{d}}t)^{1/2}\|\bar{\bm{\theta}}_n\|_a\\
				&\le C\Delta t^4\int_{t_{n-1}}^{t_n}\|G_t\|_a^2\ {\rm{d}}t+\frac{\mu\Delta t}{35}\|\bar{\bm{\theta}}_n\|_a^2,\\
				S_4&=-\Delta t^2\sum_{j=1}^{n-1}K(\bar{t}_n-\bar{t}_j)\mathcal{B}(\int_{t_{j-1}}^{\bar{t}_j}(t-t_{j-1})\bm{u}_{tt}\ {\rm{d}}t+\int_{\bar{t}_j}^{t_j}(t_j-t)\bm{u}_{tt}\ {\rm{d}}t,\bar{\bm{\theta}}_n)\\
				&\le \Delta t\sum_{j=1}^{n-1}K(\bar{t}_n-\bar t_j)\Delta t^{5/2}(\int_{t_{j-1}}^{t_j}\|\bm{u}_{tt}\|_a^2\ {\rm{d}}t)^{1/2}\|\bar{\bm{\theta}}_n\|_a\\
				&\le C\Delta t^5\int_0^{t_{n-1}}\|\bm{u}_{tt}\|_a^2\ {\rm{d}}t+\frac{\mu\Delta t}{35}\|\bar{\bm{\theta}}_n\|_a^2,\\
				S_5&=-\frac{\Delta t^2}{2}K(0)\mathcal{B}(\int_{t_{n-1}}^{\bar{t}_n}(t-t_{n-1})\bm{u}_{tt}\ {\rm{d}}t+\int_{\bar{t}_n}^{t_n}(t_n-t)\bm{u}_{tt}\ {\rm{d}}t,\bar{\bm{\theta}}_n)\\
				&\le C\Delta tK(0)\Delta t^{5/2}(\int_{t_{n-1}}^{t_n}\|\bm{u}_{tt}\|_a^2\ {\rm{d}}t)^{1/2}\|\bar{\bm{\theta}}_n\|_a\\
				&\le C\Delta t^4\int_{t_{n-1}}^{t_n}\|\bm{u}_{tt}\|_a^2\ {\rm{d}}t+\frac{\mu\Delta t}{35}\|\bar{\bm{\theta}}_n\|_a^2.
			\end{align*}
		Combining with  all above estimate for $S_1$ to $S_5 $, we conclude 
		\begin{align*}
			R_3+R_4&\le C\Delta t^4\int_{t_{n-1}}^{t_n}(\|F_{tt}\|_a^2+\|G_t\|_a^2+\|\bm{u}_{tt}\|_a^2)\ {\rm{d}}t\\
			&+C\Delta t^5\int_{0}^{t_{n-1}}(\|G_{tt}\|_a^2+\|\bm{u}_{tt}\|_a^2\ {\rm{d}}t)
			+\frac{\mu\Delta t}{7}\|\bar{\bm{\theta}}_n\|_a^2.
		\end{align*}
		Finally, we estimate $ R_5+R_6 $. We split it into several terms:
		\begin{align*}
			R_5+R_6&=(2\int_{t_{n-1}}^{t_n}\mathcal{C}(\bm{u}(t),\bm{u}(t),\bar{\bm{\theta}}_n)\ {\rm{d}}t-\Delta t\mathcal{C}(\bm u^n,\bm{u}^n,\bar{\bm{\theta}}_n)-\Delta t\mathcal{C}(\bm{u}^{n-1},\bm{u}^{n-1},\bar{\bm{\theta}}_n))\\
			&\quad+(\Delta t\mathcal{C}(\bm u^n,\bm{u}^n,\bar{\bm{\theta}}_n)+\Delta t\mathcal{C}(\bm{u}^{n-1},\bm{u}^{n-1},\bar{\bm{\theta}}_n)-2\Delta t\mathcal{C}(\bar{\bm u}^n,\bar{\bm{u}}^n,\bar{\bm{\theta}}_n))\\
			&\quad+(2\Delta t\mathcal{C}(\bar{\bm u}^n,\bar{\bm{u}}^n,\bar{\bm{\theta}}_n)-2\Delta t\mathcal{C}(\bar{\bm{u}}_h^n,\bar{\bm{u}}_h^n,\bar{\bm{\theta}}_n))\\
			&=T_1+T_2+T_3.
		\end{align*}
		Integrating by parts, we obtain 
		\begin{align*}
			T_1&=-\int_{t_{n-1}}^{t_n}(t-t_{n-1})(t_n-t)\mathcal{C}_{tt}(\bm{u}(t),\bm{u}(t),\bar{\bm{\theta}}_n)\ {\rm{d}}t\\
			&\le C\Delta t^{5/2}(\int_{t_{n-1}}^{t_n}\|\bm{u}_{tt}\|_0^2\|\bm{u}\|_2^2+\|\bm{u}_t\|_2^2\|\bm{u}_t\|_0^2\ {\rm{d}}t)^{1/2}\|\nabla\bar{\bm{\theta}}_n\|\\
			&\le \frac{\mu\Delta t}{21}\|\bar{\bm{\theta}}_n\|_a^2+C\Delta t^4\int_{t_{n-1}}^{t_n}(\|\bm{u}_{tt}\|_0^2\|\bm{u}\|_2^2+\|\bm{u}_t\|_2^2\|\bm{u}_t\|_0^2)\ {\rm{d}}t.
		\end{align*}
		By simple computation, we use the property of $ C(\cdot,\cdot,\cdot) $  and estimate $ T_2 $ as follows:
			\begin{align*}
				T_2&=\Delta t\mathcal{C}(\bm u^n,\bm{u}^n,\bar{\bm{\theta}}_n)+\Delta t\mathcal{C}(\bm{u}^{n-1},\bm{u}^{n-1},\bar{\bm{\theta}}_n)-2\Delta t\mathcal{C}(\bar{\bm u}^n,\bar{\bm{u}}^n,\bar{\bm{\theta}}_n)\\
				&=\frac{\Delta t}{2}\mathcal{C}(\bm{u}^n-\bm{u}^{n-1},\bm{u}^n-\bm{u}^{n-1},\bar{\bm{\theta}}_n)
				=\frac{\Delta t^3}{2}\mathcal{C}(d_t\bm{u}^n,d_t\bm{u}^n,\bar{\bm{\theta}}_n)\\
				&\le C\Delta t^3\|\nabla(d_t\bm{u}^n)\|_0^2\|\bar{\bm{\theta}}_n\|_a
				\le C\Delta t^5\|\nabla(d_t\bm{u}^n)\|_0^4+\frac{\mu\Delta t}{21}\|\bar{\bm{\theta}}_n\|_a^2.
			\end{align*}
		For the term $ T_3 $, we notice
		\begin{align*}
			\bar{\bm{u}}^n&=\bar{\bm{u}}^n-\bm P_h\bar{\bm{u}}^n+\bm P_h\bar{\bm{u}}^n-\bar{\bm{u}}_h^n+\bar{\bm{u}}_h^n=\bar{\bm{u}}_h^n-\bar{\bm{\theta}}_n-\bar{\bm{\xi}}_n,
		\end{align*}
	then we can rewrite $ T_3 $ as follows:
	\begin{align*}
		T_3=&-2\Delta t\mathcal{C}(\bar{\bm{u}}_h^n,\bar{\bm{\xi}}_n,\bar{\bm{\theta}}_n)
		-2\Delta t\mathcal{C}(\bar{\bm{\theta}}_n,\bar{\bm{u}}_h^n,\bar{\bm{\theta}}_n)
		+2\Delta t\mathcal{C}(\bar{\bm{\theta}}_n,\bar{\bm{\xi}}_n,\bar{\bm{\theta}}_n)\\
		&-2\Delta t\mathcal{C}(\bar{\bm{\xi}}_n,\bar{\bm{u}}_h^n,\bar{\bm{\theta}}_n)
		+2\Delta t\mathcal{C}(\bar{\bm{\xi}}_n,\bar{\bm{\xi}}_n,\bar{\bm{\theta}}_n)\\
		&=T_{31}+T_{32}+T_{33}+T_{34}+T_{35}.
	\end{align*}
By applying  \Cref{trilinear-property}  , it is obvious that
\begin{align*}
	T_{31}&\le C\Delta t\|\nabla\bar{\bm{u}}_h^n\|
	\|\nabla\bar{\bm{\xi}}_n\|\|\nabla\bar{\bm{\theta}}_n\|,\\
	T_{32}+T_{33}&\le C\Delta t\|\bar{\bm{\theta}}_n\|\|\nabla\bar{\bm{\theta}}_n\|(\|\nabla\bar{\bm{u}}_h^n\|+\|\nabla\bar{\bm{\xi}}_n\|)\\
	&+C\Delta t\|\nabla\bar{\bm{\theta}}_n\|^{3/2}\|\bar{\bm{\theta}}_n\|^{1/2}(\|\bar{\bm{u}}_h^n\|^{1/2}\|\nabla\bar{\bm{u}}_h^n\|^{1/2}+\|\bar{\bm{\xi}}_n\|^{1/2}\|\nabla\bar{\bm{\xi}}_n\|^{1/2})\\
	T_{34}&\le C\Delta t\|\nabla\bar{\bm{\xi}}_n\|\|\nabla\bar{\bm{u}}_h^n\|\|\nabla\bar{\bm{\theta}}_n\|\\
	T_{35}&\le C\Delta t\|\nabla\bar{\bm{\xi}}_n\|^2\|\nabla\bar{\bm{\theta}}_n\|.
\end{align*}
We use the stability result to conclude that
\begin{align*}
T_3&\le C\Delta th^{2k}|\bar{\bm{u}}^n|_{k+1}^2+\frac{\mu\Delta t}{21}\|\bar{\bm{\theta}}_n\|_a^2+C\Delta t\|\bar{\bm{\theta}}_n\|^2.
\end{align*}
Together with all estimates for $ T_1,T_2$, $ T_3 $, it follows that
\begin{align*}
	R_5+R_6&\le  C\Delta t^4\int_{t_{n-1}}^{t_n}(\|\bm{u}_{tt}\|_0^2\|\bm{u}\|_2^2+\|\bm{u}_t\|_2^2\|\bm{u}_t\|_0^2)\ {\rm{d}}t\\
	&+ C\Delta t^5\|\nabla(d_t\bm{u}^n)\|_0^4+
	 C\Delta th^{2k}|\bar{\bm{u}}^n|_{k+1}^2+\frac{\mu\Delta t}{7}\|\bar{\bm{\theta}}_n\|_a^2+C\Delta t\|\bar{\bm{\theta}}_n\|^2.
\end{align*}
		Combine all above estimates and sum on $ n $ from $ 1 $ to $ N $ to derive
		\begin{align}\label{error-equa--sum}
			\begin{split}
				&\|\bm{\theta}_N\|^2-\|\bm{\theta}_0\|^2+2\Delta t\sum_{n=1}^{N}\|\bar{\bm{\theta}}_n\|_a^2+2\Delta t\sum_{n=1}^{N}\mathcal{B}(\bar{\bm{\theta}}_K^{\Delta t,n},\bar{\bm{\theta}}_n)\\
				&\le C\Delta t^4\int_0^T(\|\bm{u}_{tt}\|_a^2+\|\bm{f}_{tt}\|_a^2+\|F_{tt}\|^2+\|G_{tt}\|_a^2
				+\|G_t\|_a^2+\|\bm{u}_{tt}\|^2\|\bm{u}\|_2^2+\|\bm{u}_t\|_2^2\|\bm{u}_t\|_0^2)\ {\rm{d}}t\\
				&+h^{2l+2}\int_{0}^{T}|p(t)|_{l+1}^2{\rm{d}}t+C\sum_{n=1}^{N}\Delta th^{2k}\max_{1\le n\le N}|\bm{u}^n|_{k+1}^2\\
				&+C\Delta t^5\sum_{n=1}^{N}\|\nabla(d_t\bm{u}^n)\|^4+C\Delta t\sum_{n=1}^{N}\|\bar{\bm{\theta}}_n\|^2+\mu\Delta t\sum_{n=1}^{N}\|\bar{\bm{\theta}}_n\|_a^2.
			\end{split}
		\end{align}
		Finally, we turn to bound the right hand term. From standard technique,  we can arrive at 
			\begin{align}\label{right-hand-term}
				\begin{split}
					&C\Delta t^5\sum_{n=1}^{N}\|d_t(\nabla\bm{u}^n)\|^4
					=C\Delta t\sum_{n=1}^{N}(\int_{\Omega}(\int_{t_{n-1}}^{t_n}\partial_t(\nabla\bm{u}){\rm{d}}t)^2\ {\rm d }\bm{x})^2 \\
					&\qquad\le C\Delta t^3\sum_{n=1}^{N}(\int_{\Omega}\int_{t_{n-1}}^{t_n}(\partial_t(\nabla\bm{u}))^2\ {\rm{d}}t\ {\rm{d}}\bm x)^2
					=C\Delta t^3\sum_{n=1}^{N}(\int_{t_{n-1}}^{t_n}\|\partial_t(\nabla\bm{u})\|^2\ {\rm{d}}t)^2\\
					&\qquad\le C\Delta t^4\int_{0}^{T}\|\partial_t(\nabla\bm{u})\|^4\ {\rm{d}}t.
				\end{split}
			\end{align}
		We use the \cref{assumption-1} and choose $ \mu\in(0,2-2c_0K_0) $ to reach 
		\begin{align}\label{A-B--positive}
			\begin{split}
				&2\Delta t\sum_{n=1}^{N}\|\bar{\bm{\theta}}_n\|_a^2+2\Delta t\sum_{n=1}^{N}\mathcal{B}(\bar{\bm{\theta}}_K^{\Delta t,n},\bar{\bm{\theta}}_n)\\
				&=	2\Delta t\sum_{n=1}^{N}\|\bar{\bm{\theta}}_n\|_a^2
				+2\Delta t\sum_{n=1}^{N}\mathcal{B}(\Delta t\sum_{j=1}^{n-1}K(\bar{t}_n-\bar{t}_j)\bar{\bm{\theta}}_j+\frac{\Delta t}{2}K(0)\bar{\bm{\theta}}_n,\bar{\bm{\theta}}_n)\\
				&\ge 2\Delta t\sum_{n=1}^{N}\|\bar{\bm{\theta}}_n\|_a^2-2\Delta t\sum_{n=1}^{N}c_0\sum_{j=1}^{n}K(\bar{t}_n-\bar{t}_j)\|\bar{\bm{\theta}}_j\|_a\|\bar{\bm{\theta}}_n\|_a\\
				&\ge 2\Delta t\sum_{n=1}^{N}\|\bar{\bm{\theta}}_n\|_a^2-\Delta t\sum_{n=1}^{N}c_0\sum_{j=1}^{n}K(\bar{t}_n-\bar{t}_j)(\|\bar{\bm{\theta}}_j\|_a^2+\|\bar{\bm{\theta}}_n\|_a^2)\\
				&\ge 2\Delta t\sum_{n=1}^{N}\|\bar{\bm{\theta}}_n\|_a^2-2c_0K_0\Delta t\sum_{n=1}^{N}\|\bar{\bm{\theta}}_n\|_a^2\\
				&\ge \mu\Delta t\sum_{n=1}^{N}\|\bar{\bm{\theta}}_n\|_a^2.
			\end{split}
		\end{align}
		Combining  \eqref{right-hand-term}, \eqref{A-B--positive}  and using the 
		Gronwall's inequality to \eqref{error-equa--sum} to give our result.

\end{proof}

		\subsection{Convergence result of scheme \eqref{hat_equation-nonsingular}}
		Next, we derive approximation result between the solutions of \eqref{fully-discretization--nonsingular} and \eqref{hat_equation-nonsingular}. Towards this end, we need the following several  lemmas.
		\begin{lemma} \label{hat-fem-error}
			Let $ \bm{u}_h^n $ and $ \widehat{\bm{u}}_h^n $ be the solutions of \eqref{fully-discretization--nonsingular} and \eqref{hat_equation-nonsingular}, respectively. Assume that  \Cref{assumption-1} is valid, then it holds:
			\begin{align}
				\|\bm{u}_h^{N}-\widehat{\bm{u}}_h^{N}\|\le C\sqrt{(1+\gamma^{-1})Te^T}\max_{1 \leq n \leq N}\max_{1 \leq j \leq n-1}\|\nabla(\widehat{\bm{u}}_h^j-\widetilde{\bm{u}}_{h}^{n-1,j})\|,
			\end{align}
			where $ C $ is a constant, independent of $ h $ and $ \Delta t $, $ \gamma\in (0,2c_0^{-1}K_0^{-1}-2) $.
		\end{lemma}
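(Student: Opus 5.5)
We propose to run a discrete energy argument on the difference $\bm\eta_n:=\bm u_h^n-\widehat{\bm u}_h^n$, mirroring the stability proof of \Cref{stability--1}. Subtracting \eqref{hat_equation-nonsingular} from \eqref{fully-discretization--nonsingular}, and noting that both schemes start from $\bm u_h^0=\bm P_h\bm u_0$ so that $\bm\eta_0=0$, we obtain for all $\bm v_h\in\bm V_h$
\begin{align*}
&(d_t\bm\eta_n,\bm v_h)+\mathcal A(\bar{\bm\eta}_n,\bm v_h)+\mathcal B\Bigl(\Delta t\sum_{j=1}^{n-1}K(\bar t_n-\bar t_j)\bar{\bm\eta}_j+\tfrac{\Delta t}{2}K(0)\bar{\bm\eta}_n,\bm v_h\Bigr)-\mathcal D(\bm v_h,\bar p_h^n-\bar{\widehat p}_h^n)\\
&\quad+\mathcal C(\bar{\bm u}_h^n,\bar{\bm u}_h^n,\bm v_h)-\mathcal C(\bar{\widehat{\bm u}}_h^n,\bar{\widehat{\bm u}}_h^n,\bm v_h)
=-\mathcal B\Bigl(\Delta t\sum_{j=1}^{n-1}K(\bar t_n-\bar t_j)\bigl(\bar{\widehat{\bm u}}_h^j-\bar{\widetilde{\bm u}}_h^{n-1,j}\bigr),\bm v_h\Bigr).
\end{align*}
The right-hand side is the compression defect. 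Take $\bm v_h=2\Delta t\bar{\bm\eta}_n$. The pressure term vanishes because both $\bar{\bm u}_h^n$ and $\bar{\widehat{\bm u}}_h^n$ lie in $\bm W_h$. For the nonlinear term, write
\[
\mathcal C(\bar{\bm u}_h^n,\bar{\bm u}_h^n,\cdot)-\mathcal C(\bar{\widehat{\bm u}}_h^n,\bar{\widehat{\bm u}}_h^n,\cdot)=\mathcal C(\bar{\bm\eta}_n,\bar{\bm u}_h^n,\cdot)+\mathcal C(\bar{\widehat{\bm u}}_h^n,\bar{\bm\eta}_n,\cdot).
\]
The second piece vanishes against $\bar{\bm\eta}_n$ by skew-symmetry (\Cref{trilinear-property}). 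The first piece we bound with the interpolation-type estimate of \Cref{trilinear-property}:
\[
|\mathcal C(\bar{\bm\eta}_n,\bar{\bm u}_h^n,\bar{\bm\eta}_n)|\le C\|\bar{\bm\eta}_n\|\,\|\nabla\bar{\bm\eta}_n\|\,\|\nabla\bar{\bm u}_h^n\|\le \varepsilon\|\bar{\bm\eta}_n\|_a^2+C\|\nabla\bar{\bm u}_h^n\|^2\|\bar{\bm\eta}_n\|^2 .
\]
Here we need a bound on the standard solution $\bm u_h^n$. This is available uniformly, since \Cref{error estimate-fem--1} together with the regularity of $\bm u$ (and an inverse estimate or the $H^1$-type energy bound) keeps $\|\nabla\bar{\bm u}_h^n\|$ controlled. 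This is the step I expect to be the main obstacle. If only $\Delta t\sum\|\nabla\bar{\bm u}_h^n\|^2$ is controlled, the coefficient becomes summable and a discrete Gronwall argument still works, which is presumably where the factor $e^T$ comes from.

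The memory term on the left is handled exactly as in \eqref{A-B--positive}. After summing over $n$, \Cref{assumption-1} gives
\[
2\Delta t\sum_n\|\bar{\bm\eta}_n\|_a^2+2\Delta t\sum_n\mathcal B(\cdots,\bar{\bm\eta}_n)\ge(2-2c_0K_0)\Delta t\sum_n\|\bar{\bm\eta}_n\|_a^2 .
\]
For the defect we use continuity of $\mathcal B$ together with $\|\cdot\|_a\simeq\|\nabla\cdot\|$, boundedness of $K$ (from $K\in H^2$), and Young's inequality with weight $\gamma$:
\[
2\Delta t\,c_0\Bigl\|\Delta t\sum_j K\,(\cdots)\Bigr\|_a\|\bar{\bm\eta}_n\|_a\le \gamma c_0K_0\Delta t\|\bar{\bm\eta}_n\|_a^2+C\gamma^{-1}\Delta t\,T\max_{j\le n-1}\|\nabla(\widehat{\bm u}_h^j-\widetilde{\bm u}_h^{n-1,j})\|^2 .
\]
Averaged indices $\bar{\cdot}$ involve columns $j$ and $j-1$, and both are covered by the max. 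Choosing $\gamma<2c_0^{-1}K_0^{-1}-2$ keeps the net coefficient of $\Delta t\sum\|\bar{\bm\eta}_n\|_a^2$ positive after absorbing $\varepsilon$; this explains the admissible range of $\gamma$ in the statement.

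Summing over $n$ then gives
\[
\|\bm\eta_N\|^2\le C\Delta t\sum_n\|\bar{\bm\eta}_n\|^2+C(1+\gamma^{-1})T\max_n\max_j\|\nabla(\cdots)\|^2 .
\]
The constant $1$ in $1+\gamma^{-1}$ absorbs the residual Young constants, and $\Delta tN=T$. For $\Delta t$ small, the discrete Gronwall inequality produces the factor $e^{T}$ (with the $C$ constant absorbed), and taking square roots gives the claimed bound.
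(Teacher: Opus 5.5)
Your proposal is correct and is essentially the paper's argument: test the difference equation with $2\Delta t\bar{\bm\eta}_n$, drop the pressure term and the skew term $\mathcal C(\bar{\widehat{\bm u}}_h^n,\bar{\bm\eta}_n,\bar{\bm\eta}_n)$, absorb the memory contribution using $c_0K_0<1$ together with a $\gamma$-weighted Young inequality (this is where the range $\gamma<2c_0^{-1}K_0^{-1}-2$ comes from), and close with Gronwall. The one difference is that you separate the exact compression defect $\bar{\widehat{\bm u}}_h^j-\bar{\widetilde{\bm u}}_h^{n-1,j}$ at the level of the equation and reuse \eqref{A-B--positive} for the part acting on $\bar{\bm\eta}_j$, which is cleaner than the paper's triangle-inequality split inside $R_1$ (where $\bar{\widehat{\bm e}}_n$ is written in place of $\bar{\widehat{\bm e}}_j$); as in the paper, you still tacitly need a uniform bound on $\|\nabla\bar{\bm u}_h^n\|$, and the maximum should also include the column $j=0$, which enters through the average at $j=1$.
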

		
		\begin{proof}
			We notice that $ (\bm{u}_h^{n},p_h^{n}) $ and $ (\widehat{\bm{u}}_h^{n},\widehat p_h^{n}) $ satisfying  equations \eqref{fully-discretization--nonsingular} and \eqref{hat_equation-nonsingular}, introducing  the following notations:
			\begin{align*}
				\widehat{\bm{e}}_i=\bm{u}_h^i-\widehat{\bm{u}}_h^i ,\  \widetilde{\bm{e}}_{i,j}=\bm{u}_h^j-\widetilde{\bm{u}}_h^{i,j},
			\end{align*}  
			subtracting  \eqref{hat_equation-nonsingular} from \eqref{fully-discretization--nonsingular} to obtain
			\begin{align}\label{error_hat_tilde1}
				\begin{split}
					&(d_t\widehat{\bm e}_{n},\bm{v}_h)
					+\mathcal{A}(\bar{\widehat{\bm{e}}}_n,\bm{v}_h)+\mathcal{B}(\Delta t\sum_{j=1}^{n-1}K(\bar{t}_n-\bar{t}_j)\bar{\widetilde{\bm{e}}}_{n-1,j}+\frac{\Delta t}{2}K(0)\bar{\widehat{e}}_n,\bm{v}_h)\\
					&\quad-\mathcal{D}(\bm{v}_h,\bar{p}_h^n-\bar{\widehat{p}}_h^n)+\mathcal{C}(\bar{\bm{u}}_h^n,\bar{\bm{u}}_h^n,\bm{v}_h)-\mathcal{C}(\bar{\widehat{\bm{u}}}_h^n,\bar{\widehat{\bm{u}}}_h^n,\bm{v}_h)=0.
				\end{split}
			\end{align}
			Take $ \bm{v}_h=2\Delta t\bar{\widehat{\bm{e}}}_{n} $ in $\eqref{error_hat_tilde1}$ and sum on $ n $ from $ n=1 $ to $ N $  to give
			\begin{align*}
				&\|\widehat{\bm{e}}_{N}\|^2-\|\widehat{\bm{e}}_{0}\|^2+2\Delta t\sum_{n=1}^{N}\|\bar{\widehat{
						\bm{e}}}_n\|_a^2\\
				&=-2\Delta t\sum_{n=1}^{N}\mathcal{B}(\Delta t\sum_{j=1}^{n-1}K(\bar{t}_n-\bar{t}_j)\bar{\widetilde{\bm{e}}}_{n-1,j}+\frac{\Delta t}{2}K(0)\bar{\widehat{\bm{e}}}_n,\bar{\widehat{\bm{e}}}_n)\\
				\\
				&\quad+ 2\Delta t\sum_{n=1}^{N}\mathcal{C}(\bar{\widehat{\bm{u}}}_h^n,\bar{\widehat{\bm{u}}}_h^n,\bar{\widehat{\bm{e}}}_n)
				-2\Delta t\sum_{n=1}^{N}\mathcal{C}(\bar{\bm{u}}_h^n,\bar{\bm{u}}_h^n,\bar{\widehat{\bm e}}_n)\\
				&=R_1+R_2+R_3.
			\end{align*}
			Now we move to bound the terms $ \{R_i\}_{i=1}^{3} $. We use \Cref{assumption-1}, the triangle inequality and Young inequality    to arrive at 
			\begin{align*}
				R_1&\le 2c_0\Delta t^2\sum_{n=1}^{N}\sum_{j=1}^{n-1}K(\bar{t}_n-\bar{t}_j)\|\bar{\widetilde{\bm{e}}}_{n-1,j}\|_a\|\bar{\widehat{\bm{e}}}_n\|_a+c_0\Delta t^2\sum_{n=1}^{N}K(0)\|\bar{\widehat{\bm{e}}}_n\|_a^2\\
				&\le c_0\Delta t^2\sum_{n=1}^{N}\sum_{j=1}^{n-1}K(\bar{t}_n-\bar{t}_j)(\|\bar{\widetilde{\bm{e}}}_{n-1,j}\|_a^2+\|\bar{\widehat{\bm{e}}}_n\|_a^2)+c_0\Delta t^2\sum_{n=1}^{N}K(0)\|\bar{\widehat{\bm{e}}}_n\|_a^2\\
				& \le c_0\Delta t^2\sum_{n=1}^{N}\sum_{j=1}^{n-1}K(\bar{t}_n-\bar{t}_j)((\|\bar{\widetilde{\bm{e}}}_{n-1,j}-\bar{\widehat{\bm{e}}}_n\|_a+\|\bar{\widehat{\bm{e}}}_n\|_a)^2+\|\bar{\widehat{\bm{e}}}_n\|_a^2)
				+c_0\Delta t^2\sum_{n=1}^{N}K(0)\|\bar{\widehat{\bm{e}}}_n\|_a^2\\
				&\le c_0\Delta t^2\sum_{n=1}^{N}\sum_{j=1}^{n-1}K(\bar{t}_n-\bar{t}_j)((1+\gamma)\|\bar{\widehat{\bm{e}}}_n\|_a^2+(1+\gamma^{-1})\|\bar{\widetilde{\bm{e}}}_{n-1,j}-\bar{\widehat{\bm{e}}}_n\|_a^2)\\
				&\quad +c_0\Delta t^2\sum_{n=1}^{N}\sum_{j=1}^{n}K(\bar{t}_n-\bar{t}_j)\|\bar{\widehat{\bm{e}}}_n\|_a^2\\
				&\le (2+\gamma)c_0K_0\Delta t\sum_{n=1}^{N}\|\bar{\widehat{\bm{e}}}_n\|_a^2
				+(1+\gamma^{-1})c_0K_0\Delta t\sum_{n=1}^{N}\max_{1 \leq j \leq n-1}\|\bar{\widetilde{\bm{e}}}_{n-1,j}-\bar{\widehat{\bm{e}}}_n\|_a^2
			\end{align*}
			for some $ \gamma\in (0,1).$
			For the term $ R_2 $ and $ R_3 $, it follows from  property of the trilinear function $ C(\cdot,\cdot,\cdot) $ that
			\begin{align*}
				R_2+R_3&=-2\Delta t\sum_{n=1}^{N}\mathcal{C}(\bar{\bm{u}}_h^{n},\bar{\bm{u}}_h^{n},\bar{\widehat{\bm{e}}}_{n})+2\Delta t\sum_{n=1}^{N}\mathcal{C}(\bar{\widehat{\bm{u}}}_h^{n},\bar{\widehat{\bm{u}}}_h^{n},\bar{\widehat{\bm{e}}}_{n})\\
				&=-2\Delta t\sum_{n=1}^{N}\mathcal{C}(\bar{\bm{u}}_h^{n},\bar{\bm{u}}_h^{n},\bar{\widehat{\bm{e}}}_{n})+2\Delta t\sum_{n=1}^{N}\mathcal{C}(\bar{\widehat{\bm{u}}}_h^{n},\bar{\bm{u}}_h^{n},\bar{\widehat{\bm{e}}}_{n})\\
				&=-2\Delta t\sum_{n=1}^{N}\mathcal{C}(\bar{\widehat{\bm{e}}}_{n},\bar{\bm{u}}_h^{n},\bar{\widehat{\bm{e}}}_{n})\\
				&\le C\Delta t\sum_{n=1}^{N}(\|\bar{\bm{u}}_h^n\|^{1/2}\|\nabla\bar{\bm{u}}_h^n\|^{1/2}\|\nabla\bar{\widehat{\bm{e}}}_{n}\|+\|\bar{\widehat{\bm{e}}}_n\|^{1/2}\|\nabla\bar{\widehat{\bm{e}}}_n\|^{1/2}\|\bar{\bm{u}}_h^n\|)\|\bar{\widehat{\bm{e}}}_n\|^{1/2}\|\nabla\bar{\widehat{\bm{e}}}_n\|^{1/2}\\
				&\le C\Delta t\sum_{n=1}^{N}\|\bar{\widehat{\bm{e}}}_n\|^2+\mu\Delta t\sum_{n=1}^{N}\|\bar{\widehat{\bm{e}}}_n\|_a^2.
			\end{align*}
			Then, since $ c_0K_0<1 $, we choose $ \gamma\in (0,2c_0^{-1}K_0^{-1}-2), \mu \in (0,2-(2+\gamma)c_0K_0) $ such that
			\begin{align*}
				(2+\gamma)c_0K_0+\mu<2,
			\end{align*}
			and  use the above estimate  to give
			\begin{align}\label{hat_sum}
				\begin{split}
					\|\widehat{\bm e}_N\|^2&
					\le \|\widehat{\bm{e}}_0\|^2+(1+\gamma^{-1})c_0K_0\Delta t\sum_{n=1}^{N}\max_{1 \leq j \leq n-1}\|\bar{\widetilde{\bm{e}}}_{n-1,j}-\bar{\widehat{\bm{e}}}_n\|_a^2
					+C\Delta t\sum_{n=1}^{N}\|\bar{\widehat{\bm{e}}}_n\|^2\\
					&\le \|\widehat{\bm{e}}_0\|^2+
					(1+\gamma^{-1})T\max_{1 \leq n \leq N}\max_{1 \leq j \leq n-1}\|\bar{\widetilde{\bm{e}}}_{n-1,j}-\bar{\widehat{\bm{e}}}_j\|_a^2+C\Delta t\sum_{n=1}^{N}\|\bar{\widehat{\bm{e}}}_n\|^2.
				\end{split}
			\end{align}
The  proof is finished by applying $ \widehat{\bm{e}}_0=0 $  and the Gronwall's inequality  to \eqref{hat_sum}.
\end{proof}

\begin{lemma}\label{hat-tilde-error}

Assume that $\{\widehat{\bm{u}}_h^i\}_{i=1}^n$ is the solution sequence generated by \eqref{hat_equation-nonsingular}, and that $\{\widetilde{\bm{u}}_h^{i,j}\}_{j=0}^{i}$ denotes the compressed history obtained from $\{\widetilde{\bm{u}}_h^{i-1,0},\ldots,\widetilde{\bm{u}}_h^{i-1,i-1},\widehat{\bm{u}}_h^i\}$ by the incremental SVD algorithm with tolerance $\mathtt{tol}$ applied to both $p$-truncation and singular-value truncation. Let $T_{\rm sv}$ be the total number of singular-value truncations. Then
\begin{align*}
	\max _{1 \leq n \leq N} \max _{1 \leq j \leq n-1}\left\|\widetilde{\bm{u}}_h^{n-1, j}-\widehat{\bm{u}}_h^j\right\|_a \leq(T_{\rm sv}+1) \sqrt{\sigma(S)} \,\mathtt{tol},
\end{align*}
where $\sigma(S)$ denotes the spectral radius of the stiffness matrix
\begin{align*}
	S=(\sum_{k,l=1}^{d}a_{kl}(\bm x)\frac{\partial\bm{\phi}_j}{\partial x_k},\frac{\partial\bm{\phi}_i}{\partial x_l})+(a(\bm x)\bm{\phi}_j,\bm{\phi}_i).
\end{align*}

\end{lemma}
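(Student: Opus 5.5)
The plan is to first bound the coefficient-vector (Euclidean) error and then convert it to the energy norm through the stiffness matrix. For $\bm v_h=\sum_i v_i\bm\phi_i\in\bm V_h$ with coefficient vector $v\in\mathbb R^m$, the definition of $S$ gives $\|\bm v_h\|_a^2=\mathcal A(\bm v_h,\bm v_h)=v^\top S v$. Since $S$ is symmetric positive semidefinite, $v^\top S v\le \sigma(S)|v|^2$, so
\begin{align*}
\|\widetilde{\bm u}_h^{n-1,j}-\widehat{\bm u}_h^j\|_a\le \sqrt{\sigma(S)}\,|\widetilde u_{n-1,j}-\widehat u_j|,
\end{align*}
where $\widetilde u_{n-1,j}$ and $\widehat u_j$ denote the coefficient vectors. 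It therefore suffices to show that $|\widetilde u_{i,j}-\widehat u_j|\le (T_{\rm sv}+1)\mathtt{tol}$ for every stored column $j$ at every stage $i$.

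Fix a column $j$ and follow its history through the compression stages $j,j+1,\dots,i$. When $\widehat u_j$ is first appended, there are three cases. In the $p$-truncation case it is replaced by $QQ^\top\widehat u_j$, and \eqref{lessthantol} bounds the resulting error by $\mathtt{tol}$. In the no-truncation case it is represented exactly, so the error is zero. In the case followed by a singular-value truncation, that truncation step is accounted for below.

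At later stages, $p$-truncation and no-truncation updates leave previously stored columns unchanged. As noted after Step 4 in Section~\ref{sec3}, the new factors reproduce the old columns exactly, since the old block is $Q\Sigma R^\top$ multiplied by an orthogonal change of basis. Only singular-value truncations modify old columns. By Lemma~\ref{error_SingularValueTruncation}, each such truncation discards only the last singular value $\bar\Sigma(k+1,k+1)<\mathtt{tol}$, so it changes every column by at most $\mathtt{tol}$ in Euclidean norm. Lemma~\ref{error_SingularValueTruncation} applies because the matrix before truncation, $[Q\mid\widetilde e]\bar Q\bar\Sigma\bar R^\top[\cdots]^\top$, is an exact SVD with orthonormal factors.

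Telescoping over the stages with the triangle inequality then gives
\begin{align*}
|\widetilde u_{i,j}-\widehat u_j|\le \mathtt{tol}+T_{\rm sv}\,\mathtt{tol},
\end{align*}
where the first term is the initial $p$-truncation error and each of at most $T_{\rm sv}$ singular-value truncations contributes at most $\mathtt{tol}$. Multiplying by $\sqrt{\sigma(S)}$ and taking the maximum over $n$ and $j$ yields the claim.

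The main obstacle is the bookkeeping for $p$-truncated columns. These are stored as $QW$ with the $Q$ of that moment, and a subsequent no-truncation update rotates $Q$ to $[Q\mid\widetilde e]\bar Q$. One must check that the stored $W$ columns are updated consistently, namely $W\leftarrow \bar Q^\top\begin{bmatrix}W\\0\end{bmatrix}$, so that these columns are genuinely unchanged. If the implementation instead absorbs $W$ into $R$ as in Step 2, this holds automatically, because the factorization $Q Y\,\mathrm{diag}(R,I)^\top$ reproduces the approximated columns exactly. I would verify this identity explicitly. I would also make sure that a singular-value truncation, applied to the full approximated matrix including these columns, is still covered by Lemma~\ref{error_SingularValueTruncation}.
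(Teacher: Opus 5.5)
Your proposal is correct and follows essentially the paper's argument. You telescope over the compression stages and observe that $p$-truncation and no-truncation leave stored columns unchanged. Each singular-value truncation moves a column by at most $\mathtt{tol}$ via Lemma~\ref{error_SingularValueTruncation}, the column's first insertion contributes at most $\mathtt{tol}$, and you pass to the energy norm with the factor $\sqrt{\sigma(S)}$.

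The differences are minor. You convert to the Euclidean coefficient norm once, before telescoping, whereas the paper telescopes in $\|\cdot\|_a$ and converts each increment separately. Your explicit check that the $W$-block bookkeeping and the pre-truncation factorization are exact SVDs with orthonormal factors fills in points the paper only asserts.
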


\begin{proof}
We let   $\widehat{\bm{u}}_i$ and $\widetilde{\bm{u}}_{k,l}$ be the coefficients of $\widehat{\bm{u}}_h^i$ and $\widetilde{\bm{u}}_h^{k, \ell}$ corresponding to the finite element basis functions $\left\{\bm{\phi}_s\right\}_{s=1}^{m}$, respectively, i.e.
\begin{align*}
\widehat{\bm{u}}_h^i=\sum_{j=1}^{m}(\widehat{\bm{u}}_i)_j\bm{\phi}_j,\ 
\widetilde{\bm{u}}_h^{k,\ell}=\sum_{j=1}^{m}(\widetilde{\bm{u}}_{k,\ell})_j\bm{\phi}_j,
\end{align*} 
where $ (\bm{\alpha})_j $ denote the $j$-th component of the vector $ \bm{\alpha} $.
Then we  can obtain  the following inequality for $1 \leq j \leq$ $n-1 \leq N-1$ :
\begin{align*}
		\left\|\widetilde{\bm{u}}_{h}^{n-1, j}-\widehat{\bm{u}}_{h}^j\right\|_a 
		& \leq \sum_{k=j+1}^{n-1}\left\|\widetilde{\bm{u}}_{h}^{k, j}-\widetilde{\bm{u}}_{h}^{k-1, j}\right\|_a+\left\|\widetilde{\bm{u}}_{h}^{j, j}-\widehat{\bm{u}}_{h}^j\right\|_a \\
		& =\sum_{k=j+1}^{n-1} \sqrt{\left(\widetilde{\bm{u}}_{k, j}-\widetilde{\bm{u}}_{k-1, j}\right)^{\top}  A\left(\widetilde{\bm{u}}_{k, j}-\widetilde{\bm{u}}_{k-1, j}\right)}\\
		&\quad+\sqrt{\left(\widetilde{\bm{u}}_{j, j}-\widehat{\bm{u}}_j\right)^{\top} A\left(\widetilde{\bm{u}}_{j, j}-\widehat{\bm{u}}_j\right)} \\
		& \leq \sum_{k=j+1}^{n-1} \sqrt{\sigma(S)}\left|\widetilde{\bm{u}}_{k, j}-\widetilde{\bm{u}}_{k-1, j}\right|+\sqrt{\sigma(S)}\left|\widetilde{\bm{u}}_{j, j}-\widehat{\bm{u}}_j\right|,
\end{align*}
where 
\begin{align*}
	A=(\sum_{k,l=1}^{d}a_{kl}(\bm x)\frac{\partial\bm{\phi}_j}{\partial x_k},\frac{\partial\bm{\phi}_i}{\partial x_l})+(a(\bm x)\bm{\phi}_j,\bm{\phi}_i),
\end{align*}
and $\sigma(S)$ is the spectral radius of the stiffness matrix $S$. It is notable that $\widetilde{\bm{u}}_{k, j}$ corresponds to the $j$-th compressed data at the $k$-th step for  velocity, as illustrated by:
	\begin{align*}
	\left[\widetilde{\bm{u}}_{k-1,0}\left|\widetilde{\bm{u}}_{k-1,1}\right| \ldots\left|\widetilde{\bm{u}}_{k-1, k-2}\right| \widehat{\bm{u}}_{k-1}\right] \stackrel{\text { Compressed }}{\longrightarrow}\left[\widetilde{\bm{u}}_{k, 0}\left|\widetilde{\bm{u}}_{k, 1}\right| \ldots \mid \widetilde{\bm{u}}_{k, k}\right].
\end{align*}
Furthermore, considering that both $p$ truncation and no truncation maintain the prior data unchanged, it follows that at most $\min \left\{T_{\rm{sv}}, n-1-j\right\}$ terms of $\left\{\widetilde{\bm{u}}_{k, j}-\widetilde{\bm{u}}_{k-1, j}\right\}_{k=j+1}^{n-1}$ are non-zero, where $T_{\rm{sv}}$ represents the total number of times singular value truncation is applied. Consequently, for any $1 \leq j \leq n-1 \leq N-1$, we can derive:
\begin{align}\label{tol_A1}
	\max _{1 \leq n\leq N} \max _{1 \leq j \leq n-1}\|\widetilde{\bm{u}}_{h}^{n, j}-\widehat{\bm{u}}_{h}^j\|_a \leq\left(T_{sv}+1\right) \sqrt{\sigma(S)}\,\mathtt{tol},
\end{align}
so we prove our conclusion.
\end{proof}

Combining \Cref{error estimate-fem--1,hat-fem-error,hat-tilde-error}, we apply triangle inequality to arrive at our final conclusion:
		
\begin{theorem}\label{main_result}

Let $(\bm{u}(t),p(t))$ and $(\widehat{\bm{u}}_h^n,\widehat{p}_h^n)$ be the solutions of \eqref{OldryodModule-general} and \eqref{hat_equation-nonsingular}, respectively. Assume that the tolerance $\mathtt{tol}$ is used in both $p$-truncation and singular-value truncation throughout the incremental SVD procedure. Under \Cref{assumption-1}, if
\[
\bm u\in H^2(0,T;L^2(\Omega)^d)\cap L^\infty(0,T;H^{k+1}(\Omega)^d),\qquad
p\in L^\infty(0,T;H^{l+1}(\Omega)),
\]
then there exists a constant $C_T>0$, independent of $h$, $\Delta t$, and $\mathtt{tol}$, such that
\begin{align*}
	\|\bm{u}(t_n)-\widehat{\bm{u}}_h^n\|
	\le C_T\Bigl(h^{\min\{k+1,l+1\}}+\Delta t^2+(1+T_{\rm sv})\sqrt{\sigma(S)}\,\mathtt{tol}\Bigr).
\end{align*}

\end{theorem}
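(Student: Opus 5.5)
The bound follows from the triangle inequality
\[
\|\bm u(t_n)-\widehat{\bm u}_h^n\|\le \|\bm u(t_n)-\bm u_h^n\|+\|\bm u_h^n-\widehat{\bm u}_h^n\|,
\]
where $\bm u_h^n$ solves the uncompressed scheme \eqref{fully-discretization--nonsingular}. The first term is bounded directly by \Cref{error estimate-fem--1} under the stated regularity and \Cref{assumption-1}; it contributes $C(h^{\min\{k+1,l+1\}}+\Delta t^2)$ with a constant independent of $\mathtt{tol}$, since the uncompressed scheme does not depend on the compression at all.

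For the second term I invoke \Cref{hat-fem-error}. Its proof only sums the energy identity up to the final index $N$, so the same argument works with $N$ replaced by any $n\le N$. This gives
\[
\|\bm u_h^n-\widehat{\bm u}_h^n\|\le C\sqrt{(1+\gamma^{-1})Te^T}\max_{1\le n'\le N}\max_{1\le j\le n'-1}\|\nabla(\widehat{\bm u}_h^j-\widetilde{\bm u}_h^{n'-1,j})\|.
\]
Here $\gamma$ is fixed once and for all in $(0,2c_0^{-1}K_0^{-1}-2)$ using only $c_0K_0<1$, so the prefactor depends on $T$, $c_0$, $K_0$ and the Gronwall constant, but not on $h$, $\Delta t$ or $\mathtt{tol}$. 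By \eqref{ass-A}, $\|\nabla\cdot\|\le a_0^{-1/2}\|\cdot\|_a$, so the gradient seminorm can be replaced by the energy norm. \Cref{hat-tilde-error} then bounds the double maximum by $(T_{\rm sv}+1)\sqrt{\sigma(S)}\,\mathtt{tol}$. Adding the two contributions and collecting constants into $C_T$ gives the claim.

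The main obstacle is that the constant in \Cref{hat-fem-error} must be uniform. Its Gronwall step absorbs the nonlinear term $R_2+R_3$ through a bound of the form $C\Delta t\sum\|\bar{\widehat{\bm e}}_n\|^2$, which requires uniform bounds on $\|\bar{\bm u}_h^n\|$ and $\|\nabla\bar{\bm u}_h^n\|$ for the uncompressed solution. These follow from \Cref{stability--1} together with the $H^1$-type control obtained from \Cref{error estimate-fem--1} and the regularity of $\bm u$ (for instance via an inverse estimate on $\bm\theta_n$ when $\Delta t$ and $h$ are suitably related). Importantly, they involve only $\bm u_h^n$ and not $\widehat{\bm u}_h^n$, so no smallness condition on $\mathtt{tol}$ is needed. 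A second point is that applying Gronwall's inequality requires $\Delta t$ small enough, say $C\Delta t<1/2$, so that the $n$-th summand can be absorbed; this is a standard restriction. Finally, there is an index mismatch between \Cref{hat-tilde-error} (maximum over $\widetilde{\bm u}^{n,j}$ in \eqref{tol_A1}) and \Cref{hat-fem-error} (maximum over $\widetilde{\bm u}^{n-1,j}$). This is harmless, because the former maximum over all $n\le N$ dominates the latter. The averaged differences $\bar{\widetilde{\bm e}}_{n-1,j}-\bar{\widehat{\bm e}}_j$ appearing in \eqref{hat_sum} are controlled by the same maximum via the triangle inequality, at the cost of a factor $2$ absorbed into $C_T$.
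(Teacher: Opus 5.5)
Your proposal is correct and coincides with the paper's proof, which is exactly the triangle inequality through the uncompressed solution $\bm u_h^n$ followed by \Cref{error estimate-fem--1}, \Cref{hat-fem-error} and \Cref{hat-tilde-error}. The extra points you raise are details the paper leaves implicit: running \Cref{hat-fem-error} up to an arbitrary $n\le N$, passing from $\|\nabla\cdot\|$ to $\|\cdot\|_a$ via $a_0$, the index shift in \eqref{tol_A1}, and the need for $\mathtt{tol}$-independent bounds on $\bm u_h^n$ in the Gronwall step. Note only that your inverse-estimate route to those $H^1$ bounds brings in mesh quasi-uniformity and a $\Delta t$--$h$ coupling that the theorem does not state.
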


\begin{remark}

The theorem gives an explicit tolerance-dependent perturbation bound. Therefore, statements such as ``near machine precision'' should be understood as empirical observations for a particular tolerance choice (for example $\mathtt{tol}=10^{-12}$ in Section~\ref{section 7}), not as mesh-independent theorem-level guarantees.

\end{remark}

\section{Extension to weakly singular kernels}\label{singular kernel case}\label{section 6}

In this section we extend the compression strategy to the case in which the memory kernel is weakly singular. The principal difference from the nonsingular setting lies in the time discretization of the convolution term: instead of the midpoint rule used in Section~\ref{section 2}, we employ convolution quadrature adapted to the tempered weakly singular kernel. The compression mechanism itself is unchanged.

We consider the problem

		\begin{align}\label{OldryodModule-general-singular}
			\begin{split}
				\bm{u}_t+\mathscr{A} \bm{u}+\int_0^t K(t-s)\mathscr{B} \bm{u}(s){\rm d}s+(\bm{u}\cdot \nabla)\bm{u}+\nabla p&=\bm{f},\quad \text{in } \Omega\times \left(  0,T \right], \\
				\nabla \cdot\bm{u}&=0, \quad \text{in } \Omega\times \left(  0,T \right],\\
				\bm{u}&=\bm 0,\quad \text{on } \partial \Omega\times \left(  0,T \right],\\
				\bm{u}(\bm x,0)&=\bm{u}_0(\bm x),\quad \text{in } \Omega,
			\end{split}
		\end{align}

with tempered weakly singular kernel

		\begin{align*}
			K(t)=e^{-\lambda t}\frac{1}{\Gamma(\alpha)}t^{\alpha-1},\qquad \alpha\in(0,1).
		\end{align*}

The case $\lambda=0$ reduces to the Abel kernel.

		\subsection{Fully discrete scheme}

		To derive the fully discrete approximation of \eqref{OldryodModule-general-singular}, we use the trapezoidal convolution-quadrature rule from \cite{MR4626456}:

\begin{align}\label{weights}
			\begin{split}
				\omega_n^{(\alpha,\lambda)}&=e^{-\lambda t_n}\omega_n^{(\alpha,0)},\\
				\omega_n^{(\alpha,0)}&=2^{-\alpha}\sum_{s=0}^{n}\sigma_s^{(\alpha)}\beta_{n-s}^{(\alpha)},\\
				\rho_n^{(\alpha,\lambda)}&=\frac{e^{-\lambda t_n}t_n^{\alpha}}{\Gamma(\alpha+1)}-\Delta t^{\alpha}\sum_{p=0}^{n}e^{-\lambda(t_n-t_p)}\omega_p^{(\alpha,\lambda)},\\
				\sigma_s^{(\alpha)}&=\frac{\Gamma(\alpha+s)}{\Gamma(\alpha)\Gamma(s+1)}=\frac{\alpha(\alpha+1)\cdots(\alpha+s-1)}{s!},\\
				\beta_s^{(\alpha)}&=\frac{\Gamma(\alpha+1)}{\Gamma(\alpha-s+1)\Gamma(s+1)}=\frac{\alpha(\alpha-1)\cdots(\alpha-s+1)}{s!}.
			\end{split}
		\end{align}

		and corresponding numerical quadrature :
		\begin{align*}
			Q_n^{(\alpha,\lambda)}(\bm{u})&=\Delta t^{\alpha}\sum_{p=0}^{n}\omega_p^{(\alpha,\lambda)}\bm
			u(t_{n-p})+\rho_n^{(\alpha,\lambda)}\bm{u}(0),\\
			Q_n^{(\alpha,\lambda)}(\bm{u}_h)&=\Delta t^{\alpha}\sum_{p=0}^{n}\omega_p^{(\alpha,\lambda)}\bm
			{u}_h^{n-p}+\rho_n^{(\alpha,\lambda)}\bm{u}_h^0.
		\end{align*}
		Similarly, we can derive the standard Crank-Nicolson discretization scheme \eqref{fully-discretization--singular} and compressed method comprising the incremental SVD method \eqref{hat-euqation-singular} as follows:
		
		\begin{align}\label{fully-discretization--singular}
			\begin{split}
				&(d_t\bm{u}_h^n,\bm{v}_h)+\mathcal A(\bar{\bm{u}}_h^n,\bm{v}_h)+\mathcal B\!\left(\Delta t^{\alpha}\sum_{p=0}^{n}\omega_{p}^{(\alpha,\lambda)}\bar{\bm{u}}_h^{n-p}+\bar{\rho}_n^{(\alpha,\lambda)}\bm{u}_h^0,\bm{v}_h\right)\\
				&\quad-\mathcal D(\bm{v}_h,\bar{p}_h^n)+\mathcal C(\bar{\bm{u}}_h^n,\bar{\bm{u}}_h^n,\bm{v}_h)=(\bm{f}(\bar{t}_n),\bm{v}_h),\\
				&\mathcal D(\bar{\bm u}_h^n,q_h)=0,\qquad \forall q_h\in Q_h,
			\end{split}
		\end{align}
		\begin{align}\label{hat-euqation-singular}
			\begin{split}
				(d_t\widehat{\bm{u}}_h^n,\bm{v}_h)
				&+\mathcal A(\bar{\widehat{\bm{u}}}_h^n,\bm{v}_h)
				+\mathcal B\!\left(\Delta t^{\alpha}\sum_{p=1}^{n}\omega_{p}^{(\alpha,\lambda)}\bar{\widetilde{\bm{u}}}_h^{n-1,n-p}+\Delta t^{\alpha}\omega_0^{(\alpha,\lambda)}\bar{\widehat{\bm{u}}}_h^n+\bar{\rho}_n^{(\alpha,\lambda)}\widehat{\bm{u}}_h^0,\bm{v}_h\right)\\
				&-\mathcal D(\bm{v}_h,\bar{\widehat{p}}_h^n)+\mathcal C(\bar{\widehat{\bm{u}}}_h^n,\bar{\widehat{\bm{u}}}_h^n,\bm{v}_h)=(\bm{f}(\bar{t}_n),\bm{v}_h),\\
				&\mathcal D(\bar{\widehat{\bm u}}_h^n,q_h)=0,\qquad \forall q_h\in Q_h,
			\end{split}	
		\end{align}
		where 
		\begin{align*}
			\bar{\bm{u}}_h^i=\frac{\bm{u}_h^i+\bm{u}_h^{i-1}}{2},\
			\bar{\rho}_n^{(\alpha,\lambda)}=\frac{\rho_n^{(\alpha,\lambda)}+\rho_{n-1}^{(\alpha,\lambda)}}{2},\ \bm{u}_h^{-1}=\bm{0},
		\end{align*}
		and $\bm{u}_h^0=\bm P_h\bm u_0$, $\widehat{\bm u}_h^0=\bm P_h\bm u_0$.

Note that 
		\begin{align*}
			&\frac{1}{2}Q_n^{(\alpha,\lambda)}(\bm{u}_h)
			+\frac{1}{2}Q_{n-1}^{(\alpha,\lambda)}(\bm{u}_h)\\
			&\quad=\frac{1}{2}\Delta t^{\alpha}\sum_{p=0}^{n}\omega_p^{(\alpha,\lambda)}\bm{u}_h^{n-p}
			+\frac{1}{2}\rho_{n}^{(\alpha,\lambda)}\bm{u}_h^0
			+\frac{1}{2}\Delta t^{\alpha}\sum_{p=0}^{n-1}\omega_p^{(\alpha,\lambda)}\bm{u}_h^{n-1-p}
			+\frac{1}{2}\rho_{n-1}^{(\alpha,\lambda)}\bm{u}_h^0\\
			&\quad=\Delta t^{\alpha}\sum_{p=0}^{n}\omega_{p}^{(\alpha,\lambda)}\bar{\bm{u}}_h^{n-p}+\bar{\rho}_n^{(\alpha,\lambda)}\bm{u}_h^0, 
		\end{align*}
		and the weights sequences $ \omega_{p}^{(\alpha,\lambda)} $ defined in \eqref{weights} satisfy the following positive definite property:
		\begin{lemma}\label{positive-define property}
			Let $ \omega_{p}^{(\alpha,\lambda)} $ be defined in \eqref{weights}, then for any positive integer $ N $ and $ \widetilde{\bm{v}}_1,\widetilde{\bm{v}}_2,\cdots,\widetilde{\bm{v}}_N\in (H_0^1(\Omega))^d $, the following inequality holds:
			\begin{align}\label{sequence-positive}
				\sum_{n=1}^{N}\sum_{p=0}^{n}\omega_p^{(\alpha,\lambda)}(\widetilde{\bm v}_{n-p},\widetilde{\bm v}_n)\ge 0.
			\end{align}
			If $ \mathcal B $ is a symmetric non-negative definite elliptic operator, then 
			\begin{align}\label{oprator-sequence-positive}
				\sum_{n=1}^{N}\sum_{p=0}^{n}\omega_p^{(\alpha,\lambda)}\mathcal B(\widetilde{\bm v}_{n-p},\widetilde{\bm v}_n)\ge 0.
			\end{align}
		\end{lemma}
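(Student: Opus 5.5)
The proof of \Cref{positive-define property} goes through the generating function of the weights and a Herglotz-type (Fourier/Parseval) representation of the quadratic form. First, from \eqref{weights}, $\sum_s \sigma_s^{(\alpha)}\zeta^s=(1-\zeta)^{-\alpha}$ and $\sum_s\beta_s^{(\alpha)}\zeta^s=(1+\zeta)^{\alpha}$. Hence
\[
\sum_{n\ge0}\omega_n^{(\alpha,0)}\zeta^n=\Bigl(\frac{1+\zeta}{2(1-\zeta)}\Bigr)^{\alpha},
\]
which is the trapezoidal symbol $\bigl(\delta(\zeta)\bigr)^{-\alpha}$ with $\delta(\zeta)=2(1-\zeta)/(1+\zeta)$. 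The tempering factor $e^{-\lambda t_n}=q^n$ with $q=e^{-\lambda\Delta t}\in(0,1]$ simply replaces $\zeta$ by $q\zeta$. Thus $\widehat\omega(\zeta):=\sum_n\omega_n^{(\alpha,\lambda)}\zeta^n=\bigl(\tfrac{1+q\zeta}{2(1-q\zeta)}\bigr)^{\alpha}$, analytic in the open unit disc.

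The key step is to show that $\operatorname{Re}\widehat\omega(\zeta)\ge0$ for $|\zeta|<1$. The Möbius map $w=\frac{1+q\zeta}{1-q\zeta}$ sends the disc $|\zeta|<1$ into the right half-plane, since $|q\zeta|<1$. Taking the principal branch, $|\arg w^\alpha|<\alpha\pi/2<\pi/2$ because $\alpha\in(0,1)$, so the real part is positive.

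Next I would convert this into the sum inequality. Extend the finite sequence by $\widetilde{\bm v}_n=0$ for $n\le0$ and $n>N$, and insert the damping $\widetilde{\bm v}_n\mapsto r^n\widetilde{\bm v}_n$ with $r<1$ (or take radial limits $\zeta=re^{i\theta}$, $r\to1^-$). The lower-triangular sum is then expressed by Parseval:
\[
\sum_{n=1}^N\sum_{p=0}^{n}\omega_p r^p(\widetilde{\bm v}_{n-p},\widetilde{\bm v}_n)=\frac{1}{2\pi}\int_0^{2\pi}\operatorname{Re}\widehat\omega(re^{i\theta})\,\Bigl\|\sum_n \widetilde{\bm v}_n e^{in\theta}\Bigr\|^2 d\theta\ \ge0 .
\]
Here the real part appears because the left side is real, so it equals half the sum of itself and its symmetric transpose. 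The $p=n$ terms pair with $\widetilde{\bm v}_0$. If $\widetilde{\bm v}_0$ is not meant to be zero, I would include index $0$ in the sequence; the quadratic form over $n=0,\dots,N$ differs from \eqref{sequence-positive} only by the term $\omega_0\|\widetilde{\bm v}_0\|^2\ge0$, and I would need to check which convention is intended. Letting $r\to1^-$ is justified because the sum is finite and $\omega_p$ is continuous in $r$; this gives \eqref{sequence-positive} for the $L^2$ inner product.

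For \eqref{oprator-sequence-positive}, the symmetric nonnegative $\mathcal B$ defines a semi-inner product on $\bm V$. The Parseval argument above uses only the sesquilinear (complexified) symmetric nonnegative structure of the pairing, so the same computation applies with $(\cdot,\cdot)$ replaced by $\mathcal B(\cdot,\cdot)$. Alternatively, one can diagonalize $\mathcal B$ through its spectral decomposition on a finite-dimensional space and apply the scalar result componentwise.

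I expect the main obstacle to be the bookkeeping in the Parseval step: matching the truncated sum index range, including the $p=n$ term, to the full Toeplitz quadratic form, and handling convergence on the boundary $|\zeta|=1$. At $\zeta=1$ with $\lambda=0$ the symbol $\widehat\omega$ blows up, which is exactly why I would use the radial damping $r<1$ before passing to the limit.
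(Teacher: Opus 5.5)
Your argument is correct, and it takes a different route from the paper. The paper gives no argument of its own: it cites \cite[Lemma 5]{MR4626456} for the scalar inequality and \cite[Lemma 4.2]{MR1686149} to pass from $(\cdot,\cdot)$ to $\mathcal B$. You instead make the positivity self-contained. The generating function $\widehat\omega(\zeta)=\bigl(\tfrac{1+q\zeta}{2(1-q\zeta)}\bigr)^{\alpha}$ has positive real part in the unit disc, because the trapezoidal map sends the disc into the right half-plane and $\alpha<1$. Parseval with radial damping then turns this into nonnegativity of the lower-triangular Toeplitz form, and letting $r\to1^-$ in the finite sum is legitimate. The $\mathcal B$ case follows in either of two ways. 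One is complexification, which uses exactly the symmetry of $\mathcal B$: for $V=X+iY$ one gets $\mathcal B(V,\overline V)=\mathcal B(X,X)+\mathcal B(Y,Y)\ge 0$. The other is writing $\mathcal B$ as a nonnegative sum of squares of linear functionals on the span of the $\widetilde{\bm v}_n$. The citation buys brevity. Your route shows which structural property is actually used, namely a symbol with nonnegative real part in the disc, so it applies verbatim to any convolution quadrature with that property and makes the step to $\mathcal B$ transparent.

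One point needs correcting. The $p=n$ terms involve $\widetilde{\bm v}_0$, and your fallback for $\widetilde{\bm v}_0\neq 0$ runs the wrong way. The form over $n=0,\dots,N$ equals the sum in \eqref{sequence-positive} \emph{plus} $\omega_0^{(\alpha,\lambda)}\|\widetilde{\bm v}_0\|^2$. Its nonnegativity therefore only yields that the lemma's sum is at least $-\omega_0^{(\alpha,\lambda)}\|\widetilde{\bm v}_0\|^2$. In fact the inequality is false when $\widetilde{\bm v}_0\neq 0$. Take $N=1$: then $\omega_0^{(\alpha,\lambda)}=2^{-\alpha}$ and $\omega_1^{(\alpha,\lambda)}=2^{1-\alpha}\alpha e^{-\lambda\Delta t}>0$. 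With $\widetilde{\bm v}_0=-c\,\widetilde{\bm v}_1$ the sum equals $\bigl(\omega_0^{(\alpha,\lambda)}-c\,\omega_1^{(\alpha,\lambda)}\bigr)\|\widetilde{\bm v}_1\|^2$, which is negative for $c>e^{\lambda\Delta t}/(2\alpha)$. So the statement must be read with $\widetilde{\bm v}_0=0$, which is consistent with the data being only $\widetilde{\bm v}_1,\dots,\widetilde{\bm v}_N$. With that convention your proof is complete. You should state the convention explicitly and drop the alternative rather than leave it as an open check.
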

		\begin{proof}
			The equality \eqref{sequence-positive} is a straight result of \cite[Lemma 5]{MR4626456}. Combining \cite[lemma 4.2]{MR1686149} with \eqref{sequence-positive}, \eqref{oprator-sequence-positive} can be obtained immediately.
		\end{proof}
	
	\subsection{Error analysis of scheme \eqref{fully-discretization--singular}}
	
	As in Section~\ref{section 5}, we split the analysis into three parts: the error of the standard fully discrete scheme, the perturbation introduced by history compression, and the total error obtained by combining the two. The weakly singular kernel requires one additional structural assumption on the bilinear form $\mathcal B$ and on the convolution weights.

	\begin{assumption}\label{assumption-singular}
	
	\begin{enumerate}
		\item[(A1)] The bilinear form $\mathcal B(\cdot,\cdot)$ is symmetric and nonnegative on $\bm V$.
		\item[(A2)] The weight sequence $\{\omega_p^{(\alpha,\lambda)}\}_{p\ge 0}$ defined in \eqref{weights} satisfies the positive-definite property \eqref{oprator-sequence-positive}.
	\end{enumerate}
	
	\end{assumption}

	Under these assumptions, the following stability result and quadrature error estimate are the key ingredients in the weakly singular analysis.

\begin{lemma}[Stability result]
 			\label{stability--2}
			Suppose that $\bm{u}_h^n$ is the solution of \eqref{fully-discretization--singular} and that \Cref{assumption-singular} holds. Then, for all $n\ge 1$,
			\begin{align*}
				\max_{1 \leq n \leq N}\|\bm{u}_h^n\|^2\le C\Delta t\sum_{n=1}^{N}\|\bm{f}(\bar{t}_n)\|^2+C\|\nabla\bm{u}_h^0\|^2.
			\end{align*}
		\end{lemma}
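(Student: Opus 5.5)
We follow the energy argument of \Cref{stability--1}, but control the memory term through the positive-definiteness in \Cref{positive-define property} rather than through a smallness condition like $c_0K_0<1$.

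\textbf{Energy identity.} Take $(\bm v_h,q_h)=(2\Delta t\,\bar{\bm u}_h^n,\bar p_h^n)$ in \eqref{fully-discretization--singular}. The pressure terms cancel, since $\mathcal D(\bar{\bm u}_h^n,\bar p_h^n)$ appears once in the momentum equation and once in the constraint. The convective term vanishes because $\mathcal C(\bar{\bm u}_h^n,\bar{\bm u}_h^n,\bar{\bm u}_h^n)=0$ by \Cref{trilinear-property}. This leaves
\begin{align*}
\|\bm u_h^n\|^2-\|\bm u_h^{n-1}\|^2+2\Delta t\|\bar{\bm u}_h^n\|_a^2
+2\Delta t^{1+\alpha}\sum_{p=0}^{n}\omega_p^{(\alpha,\lambda)}\mathcal B(\bar{\bm u}_h^{n-p},\bar{\bm u}_h^n)
=-2\Delta t\,\bar\rho_n^{(\alpha,\lambda)}\mathcal B(\bm u_h^0,\bar{\bm u}_h^n)+2\Delta t(\bm f(\bar t_n),\bar{\bm u}_h^n).
\end{align*}

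\textbf{Summation and the memory term.} Sum over $n=1,\dots,N$. Set $\widetilde{\bm v}_n=\bar{\bm u}_h^n$ for $n\ge1$. Because the convention $\bm u_h^{-1}=\bm 0$ gives $\bar{\bm u}_h^0=\bm u_h^0/2$, the $p=n$ term involves $\widetilde{\bm v}_0$. We therefore define $\widetilde{\bm v}_0=\bm u_h^0/2$ and apply \eqref{oprator-sequence-positive} in the form stated in \Cref{positive-define property}, which includes index $0$ in the history. If the lemma is read as requiring $\widetilde{\bm v}_0=0$, we instead move the single term $\omega_n^{(\alpha,\lambda)}\mathcal B(\bar{\bm u}_h^0,\bar{\bm u}_h^n)$ to the right-hand side and treat it exactly like the $\bar\rho_n$ term below. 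Either way, Assumption (A1)–(A2) makes the double memory sum nonnegative, so it can be dropped from the left-hand side.

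\textbf{Data terms.} On the right-hand side:
\begin{itemize}
\item Bound $|\mathcal B(\bm u_h^0,\bar{\bm u}_h^n)|\le c_0\|\bm u_h^0\|_a\|\bar{\bm u}_h^n\|_a$, then apply Young's inequality to get $\varepsilon\Delta t\|\bar{\bm u}_h^n\|_a^2+C\Delta t|\bar\rho_n|^2\|\nabla\bm u_h^0\|^2$.
\item Treat the forcing term as in \Cref{stability--1}.
\end{itemize}
The $\varepsilon$-terms are absorbed into $2\Delta t\sum\|\bar{\bm u}_h^n\|_a^2$. Taking the maximum over $N$, using $\|\bm u_h^0\|\le C\|\nabla\bm u_h^0\|$ (Poincaré), and applying Gronwall where needed yields the stated bound.

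\textbf{Main obstacle.} The delicate step is showing that $\Delta t\sum_{n=1}^N|\bar\rho_n^{(\alpha,\lambda)}|^2$ is bounded uniformly in $\Delta t$, together with the analogous boundary weight $\omega_n^{(\alpha,\lambda)}$ if it had to be moved. The plan is to use the known starting-weight estimate from \cite{MR4626456}: $\rho_n^{(\alpha,\lambda)}$ is the correction that makes the quadrature exact on constants, and it satisfies $|\rho_n^{(\alpha,\lambda)}|\le C\Delta t^{\alpha}$, or at worst $|\rho_n|\le C t_n^{\alpha-1}\Delta t$. Either bound makes $\Delta t\sum_n|\bar\rho_n|^2\le C(T)$, since $t^{2\alpha-2}\Delta t^2$ summed with weight $\Delta t$ stays bounded for $\alpha\in(0,1)$. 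If only an $\ell^1$ bound $\sum_n|\bar\rho_n|\le C$ were available, we would instead place $\|\bar{\bm u}_h^n\|_a\le\max_n\|\bar{\bm u}_h^n\|_a$ and absorb differently. Failing that, we would use $\|\bm u_h^n\|$ rather than the $\|\cdot\|_a$ norm together with a discrete Gronwall argument.
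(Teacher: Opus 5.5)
Your argument is correct, and it is the one the paper intends when it says the proof is ``similar to \Cref{stability--1}'': the same test pair $(2\Delta t\,\bar{\bm u}_h^n,\bar p_h^n)$, with the positive-definiteness of \Cref{positive-define property} replacing $c_0K_0<1$ and the $\bar\rho_n^{(\alpha,\lambda)}\bm u_h^0$ term absorbed by Young's inequality, which is where $\|\nabla\bm u_h^0\|^2$ comes from. Only your second reading of the lemma is sound, because with $\widetilde{\bm v}_0\neq\bm 0$ the double sum is indefinite (for $N=1$ it equals $\omega_0\|\widetilde{\bm v}_1\|^2+\omega_1(\widetilde{\bm v}_0,\widetilde{\bm v}_1)$); and the weight bound you flag does hold, since $\rho_n^{(\alpha,\lambda)}=e^{-\lambda t_n}\rho_n^{(\alpha,0)}$ with $|\rho_n^{(\alpha,0)}|\le C\Delta t^{\alpha}(n+1)^{\alpha-1}$ (compare the coefficients of $2^{-\alpha}(1+\zeta)^{\alpha}(1-\zeta)^{-\alpha-1}$ with $n^{\alpha}/\Gamma(\alpha+1)$) and $|\omega_n^{(\alpha,\lambda)}|\le C$, so the full coefficient of $\mathcal B(\bm u_h^0,\bar{\bm u}_h^n)$ is $\mathcal O(\Delta t^{\alpha})$.
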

		The proof of \cref{stability--2} is similar to \cref{stability--1}, so we omit it here.

		\begin{lemma}{\cite[Lemma 4]{MR4626456}}
			\label{quadrature-error}
			If $\varphi \in C^1([0, T])$, $\varphi^{\prime \prime}$ is continuous and integrable on $(0, T]$, then, for $n \geq 1, \alpha \in(0,1)$ and $\lambda \in[0, \infty)$,
			\begin{align*}
				\left|\breve{\varepsilon}^{(\alpha, \lambda)}(\varphi)\left(t_n\right)\right| \leq & C\left[\Delta t^2 t_n^{\alpha-1}\left|\varphi^{\prime}(0)\right|+\Delta t^{\alpha+1} \int_{t_{n-1}}^{t_n}\left|(p \varphi)^{\prime \prime}(s)\right| {\rm d} s\right. \\
				& \left.+\Delta t^2 \int_0^{t_{n-1}}\left(t_n-s\right)^{\alpha-1}\left|(p \varphi)^{\prime \prime}(s)\right| {\rm d} s\right], \quad n=1,2, \ldots, N,
			\end{align*}
			where $p(t)=e^{\lambda t}$ and $\breve{\varepsilon}^{(\alpha, \lambda)}(\varphi)\left(t_n\right)  $ denote the convolution quadrature error:
			\begin{align*}
				\breve{\varepsilon}^{(\alpha, \lambda)}(\varphi)\left(t_n\right)=\int_{
					0}^{t_n}K(t_n-s)\phi(s)\ {\rm d}s-Q_n^{(\alpha,\lambda)}(\phi).
			\end{align*}
		\end{lemma}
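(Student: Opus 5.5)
The plan is to reduce the tempered quadrature to the untempered fractional-integral quadrature, and then run the classical Lubich-type error analysis based on a Taylor expansion with a Peano-kernel remainder.

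\textbf{Step 1 (reduction to $\lambda=0$).} Set $p(t)=e^{\lambda t}$ and $\psi=p\varphi$. Since $K(t_n-s)=e^{-\lambda t_n}e^{\lambda s}(t_n-s)^{\alpha-1}/\Gamma(\alpha)$, we have $\int_0^{t_n}K(t_n-s)\varphi(s)\,{\rm d}s=e^{-\lambda t_n}I^\alpha\psi(t_n)$, where $I^\alpha$ is the Riemann--Liouville integral. From $\omega_p^{(\alpha,\lambda)}=e^{-\lambda t_p}\omega_p^{(\alpha,0)}$ one gets $\Delta t^\alpha\sum_{p}\omega_p^{(\alpha,\lambda)}\varphi(t_{n-p})=e^{-\lambda t_n}\Delta t^\alpha\sum_p\omega_p^{(\alpha,0)}\psi(t_{n-p})$. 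I would check directly from \eqref{weights} that $\rho_n^{(\alpha,\lambda)}$ is exactly the starting weight making the rule exact on $\varphi\equiv 1$, and rewrite the $\rho_n$ term in the same form so that
\[
\breve\varepsilon^{(\alpha,\lambda)}(\varphi)(t_n)=e^{-\lambda t_n}\,E_n(\psi),
\]
where $E_n$ is the error of the $\lambda=0$ rule with the corresponding starting correction. Because $e^{-\lambda t_n}\le 1$, it then suffices to bound $E_n(\psi)$. The generating function $2^{-\alpha}(1-\zeta)^{-\alpha}(1+\zeta)^{\alpha}=\bigl(\tfrac{1}{2}\tfrac{1+\zeta}{1-\zeta}\bigr)^{\alpha}$ of $\omega^{(\alpha,0)}$ identifies this rule as trapezoidal convolution quadrature, which is second order.

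\textbf{Step 2 (Taylor splitting).} Write
\[
\psi(t)=\psi(0)+t\psi'(0)+\int_0^t (t-s)\psi''(s)\,{\rm d}s .
\]
The constant part is integrated exactly because of the $\rho_n$ correction. For the linear part, the standard Lubich estimate for an order-two convolution quadrature applied to $t^\beta$ with $\beta=1$ gives an error $O(\Delta t^2 t_n^{\alpha+\beta-2})=O(\Delta t^2 t_n^{\alpha-1})$. This produces the first term of the bound. Here $\psi'(0)=\varphi'(0)+\lambda\varphi(0)$, so the term $\lambda\varphi(0)$ has to be accounted for. I would try to absorb it through exactness of the corrected rule on $e^{-\lambda t}$, that is, on the $\psi$-constant mode. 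If that fails, it is a harmless constant-dependent term.

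\textbf{Step 3 (Peano remainder, the main obstacle).} By linearity,
\[
E_n\Bigl(\int_0^{\cdot}(\cdot-s)\psi''(s)\,{\rm d}s\Bigr)=\int_0^{t_n}\psi''(s)\,E_n\bigl((\cdot-s)_+\bigr)\,{\rm d}s .
\]
The key is a uniform bound for the quadrature error on the truncated linear functions $g_s(t)=(t-s)_+$:
\[
|E_n(g_s)|\le C\Delta t^2(t_n-s)^{\alpha-1}\quad\text{for } s\le t_{n-1},\qquad |E_n(g_s)|\le C\Delta t^{\alpha+1}\quad\text{for } s\in(t_{n-1},t_n].
\]
Integrating these bounds against $|\psi''|$ yields the remaining two terms.

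I would prove the first bound by translation invariance. Up to an $O(\Delta t^{\alpha+1})$ local correction from the grid cell containing $s$, $g_s$ is a shifted linear function. Its error is therefore the error of the linear monomial evaluated at time $t_n-s$, which Step 2 already controls by $\Delta t^2(t_n-s)^{\alpha-1}$. The second bound follows from crude estimates: $|g_s|\le\Delta t$ near $t_n$, together with $\Delta t^\alpha\sum_p|\omega_p^{(\alpha,0)}|=O(t_n^\alpha)$ and the size $O(\Delta t^\alpha)$ of the few nonzero terms.

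The delicate point is the kink of $g_s$ off the grid. I would handle it first through the generating-function representation of the error, namely a contour integral of $\bigl(\delta(e^{-z\Delta t})/\Delta t\bigr)^{-\alpha}-z^{-\alpha}$ against $\hat g_s$, using $|\delta(e^{-z\Delta t})/\Delta t-z|\le C\Delta t^2|z|^3$ on the relevant sector. If that becomes cumbersome, I would fall back on the known decay $\omega_p^{(\alpha,0)}=p^{\alpha-1}/\Gamma(\alpha)+O(p^{\alpha-2})$ and direct summation.
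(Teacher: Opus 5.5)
The paper does not prove this lemma; it imports it from \cite[Lemma 4]{MR4626456}. Your Step 1 is correct, and it is evidently the mechanism behind the $(p\varphi)''$ terms. Since $\rho_n^{(\alpha,\lambda)}=e^{-\lambda t_n}\rho_n^{(\alpha,0)}$, one has $Q_n^{(\alpha,\lambda)}(\varphi)=e^{-\lambda t_n}Q_n^{(\alpha,0)}(p\varphi)$ and hence $\breve\varepsilon^{(\alpha,\lambda)}(\varphi)(t_n)=e^{-\lambda t_n}\breve\varepsilon^{(\alpha,0)}(p\varphi)(t_n)$. The Taylor/Peano-kernel route you then take is the standard one. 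Note, however, that the starting weight makes the rule exact for $\varphi=e^{-\lambda t}$, not for $\varphi\equiv 1$ as you write in Step 1. That distinction is exactly where the argument breaks.

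\textbf{The first term cannot be obtained.} Your Taylor step yields $C\Delta t^2t_n^{\alpha-1}|\psi'(0)|$ with $\psi'(0)=\varphi'(0)+\lambda\varphi(0)$. Neither of your remedies removes the $\lambda\varphi(0)$ part.
\begin{itemize}
\item Exactness on $e^{-\lambda t}$ is exactness on the constant $\psi$-mode. It has already been spent on $\psi(0)$ and says nothing about the linear mode, whose error is genuinely of size $\Delta t^2t_n^{\alpha-1}$.
\item The leftover $\lambda|\varphi(0)|\,\Delta t^2t_n^{\alpha-1}$ is not dominated by anything on the stated right-hand side.
\end{itemize}
In fact the lemma as written is false for $\lambda>0$. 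Take $\varphi\equiv1$ and $n=1$. Then $\varphi'(0)=0$, and the right-hand side equals $C\lambda\Delta t^{\alpha+1}(e^{\lambda\Delta t}-1)=O(\Delta t^{\alpha+2})$. On the other hand, $\omega_0^{(\alpha,\lambda)}=2^{-\alpha}$ and the formula for $\rho_1^{(\alpha,\lambda)}$ give
\[
\breve\varepsilon^{(\alpha,\lambda)}(1)(t_1)=\lambda\Delta t^{\alpha+1}\Bigl(\frac{1}{\Gamma(\alpha+2)}-2^{-\alpha}\Bigr)+O(\Delta t^{\alpha+2}),
\]
and the bracket is strictly positive on $(0,1)$ by strict log-convexity of $\Gamma$. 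What your argument proves, and what is true, is the bound with $|\varphi'(0)|$ replaced by $|(p\varphi)'(0)|$. You should state that rather than try to absorb the term. The extra piece is harmless only downstream: in the proof of \Cref{error estimate-fem--2} it enters exactly like the $\|\bm u_t(0)\|_a$ term, with $\lambda\|\bm u(0)\|_a$ added.

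\textbf{The off-grid Peano kernel in Step 3.} For $s\le t_{n-1}$, an $O(\Delta t^{\alpha+1})$ correction is too weak. Integrated against $|\psi''|$ it gives $\Delta t^{\alpha+1}\int_0^{t_{n-1}}|\psi''|$, which is not controlled by $\Delta t^2\int_0^{t_{n-1}}(t_n-s)^{\alpha-1}|\psi''|$ when $\psi''$ lives far from $t_n$. The correction is also not local.

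For $s\in(t_{j-1},t_j]$ with $j\le n-1$, write $g_s=g_{t_j}+(t_j-s)\mathbf{1}_{[t_j,\infty)}+r_s$, where $r_s=(\cdot-s)\mathbf{1}_{(s,t_j)}$ vanishes at every grid point. Grid-translation invariance then gives
\[
E_n(g_s)=E_{n-j}(t)+(t_j-s)\,\rho_{n-j}^{(\alpha,0)}+I^\alpha r_s(t_n),
\]
where $E_m(t)$ denotes the error on the linear monomial at $t_m$. The middle term appears because the shifted step is integrated by the uncorrected rule; the $\rho$-correction only acts at $t=0$. You therefore need the additional estimate $|\rho_m^{(\alpha,0)}|\le C\Delta t\,t_m^{\alpha-1}$, which is the first-order error of the uncorrected rule on constants. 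Combining it with $0\le I^\alpha r_s(t_n)\le C\Delta t^2t_{n-j}^{\alpha-1}$ and $t_{n-j}\ge\frac12(t_n-s)$ gives $|E_n(g_s)|\le C\Delta t^2(t_n-s)^{\alpha-1}$.

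Shifting by the off-grid amount $s$ instead avoids this correction. But then you need the error of the continuous-time rule $\Delta t^\alpha\sum_p\omega_p g(\tau-t_p)$ at off-grid $\tau$, which Step 2 does not supply.

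Finally, the trapezoidal generator $2(1-\zeta)/(1+\zeta)$ is singular at $\zeta=-1$. Consequently the weights carry an oscillating component of order $p^{-\alpha-1}$, and Lubich's standard hypotheses do not apply verbatim. In particular, your fallback expansion $\omega_p^{(\alpha,0)}=p^{\alpha-1}/\Gamma(\alpha)+O(p^{\alpha-2})$ is false for $\alpha<\frac12$. This component is harmless, but it must be carried explicitly in Step 2 and in the bound for $\rho_m^{(\alpha,0)}$.
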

	\begin{remark}
		From \cref{quadrature-error}, it follows that if $\left|\varphi^{\prime}(0)\right|=0$ and $\left|\varphi^{\prime \prime}(t)\right| \leq C$ for $t \in[0, T]$, then the quadrature error is $\mathcal{O}(\Delta t^2)$.
	\end{remark}

		Based on the above lemmas, we can establish the following error estimate between the solutions of \eqref{fully-discretization--singular} and \eqref{OldryodModule-general-singular}.
		
		\begin{lemma}\label{error estimate-fem--2}
		
		Let $\bm u_h^n\in\bm V_h$ and $\bm u(t_n)\in \bm V$ be the solutions of \eqref{fully-discretization--singular} and \eqref{OldryodModule-general-singular}, respectively. Assume that \Cref{assumption-singular} holds and that
		\[
		\bm u\in L^\infty(0,T;H^{k+1}(\Omega)^d),\qquad p\in L^\infty(0,T;H^{l+1}(\Omega)),
		\]
		and that $\bm u$ satisfies the regularity condition
		\begin{align}\label{regularity-condition}
			\left\|\bm{u}_t\right\| \leq C, \quad\left\|\bm{u}_{tt}\right\| \leq C e^{-\lambda t} t^{\alpha-1}, \quad\left\|\bm{u}_{ttt}\right\| \leq C e^{-\lambda t} t^{\alpha-2}, \quad t \rightarrow 0^{+},
		\end{align}
		where $C$ is independent of $h$ and $\Delta t$. Then, for all $1\le n\le N$,
		\begin{align*}
			\|\bm{u}(t_n)-\bm{u}_h^n\|\le C\bigl(h^{\min\{k+1,l+1\}}+\Delta t^{1+\alpha}\bigr).
		\end{align*}
		
		\end{lemma}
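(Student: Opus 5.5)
We follow the proof of \Cref{error estimate-fem--1}, swapping the midpoint memory quadrature for the convolution-quadrature consistency error of \Cref{quadrature-error}. We also replace the smallness argument based on $c_0K_0<1$ by the positivity of \Cref{positive-define property}, which is available under \Cref{assumption-singular}.

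\textbf{Step 1 (error equation).} Split $\bm e_n=\bm\theta_n+\bm\xi_n$ with $\bm\theta_n=\bm u_h^n-\bm P_h\bm u^n$ and $\bm\xi_n=\bm P_h\bm u^n-\bm u^n$. Integrate the variational formula over $[t_{n-1},t_n]$ and subtract it from \eqref{fully-discretization--singular}. Using the identity
\[
\Delta t^{\alpha}\sum_{p=0}^{n}\omega_p^{(\alpha,\lambda)}\bar{X}^{n-p}+\bar\rho_n^{(\alpha,\lambda)}X^0=\tfrac12\bigl(Q_n^{(\alpha,\lambda)}(X)+Q_{n-1}^{(\alpha,\lambda)}(X)\bigr),
\]
the memory residual becomes
\[
\mathcal B\Bigl(\tfrac{1}{\Delta t}\int_{t_{n-1}}^{t_n}F\,{\rm d}t-\tfrac12\bigl(F(t_n)+F(t_{n-1})\bigr),\bm v_h\Bigr)+\mathcal B\Bigl(\tfrac12\bigl(\breve\varepsilon^{(\alpha,\lambda)}(\bm u)(t_n)+\breve\varepsilon^{(\alpha,\lambda)}(\bm u)(t_{n-1})\bigr),\bm v_h\Bigr),
\]
where $F(t)=\int_0^tK(t-s)\bm u(s)\,{\rm d}s$.

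\textbf{Step 2 (energy identity).} Test with $\bm v_h=2\Delta t\bar{\bm\theta}_n$ and sum over $n$. The term $2\Delta t^{1+\alpha}\sum_n\sum_p\omega_p\mathcal B(\bar{\bm\theta}_{n-p},\bar{\bm\theta}_n)$ is nonnegative by (A2) and can be dropped. The $\bar\rho_n\bm\theta_0$ term vanishes or is handled trivially because $\bm\theta_0=0$.

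\textbf{Step 3 (residual terms).} The terms $\mathcal A$, $\bm f$, pressure and trilinear are bounded exactly as $R_1,R_2,R_5$--$R_9$ were, giving $\mathcal O(h^{2\min\{k,l+1\}}+\Delta t^4)$ contributions plus absorbable pieces $\mu\Delta t\|\bar{\bm\theta}_n\|_a^2$ and Gronwall terms $C\Delta t\|\bar{\bm\theta}_n\|^2$. The $\bm\xi$ part of the memory term is handled using boundedness of the weights, $\Delta t^\alpha\sum_p|\omega_p|\le C$.

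\textbf{Step 4 (memory consistency; main obstacle).} Apply \Cref{quadrature-error} with $\varphi=\bm u$, measured in the norm needed for $\mathcal B$. The regularity \eqref{regularity-condition} gives $|(p\varphi)''|\lesssim s^{\alpha-1}$. Hence $\Delta t^{\alpha+1}\int_{t_{n-1}}^{t_n}s^{\alpha-1}\,{\rm d}s\lesssim\Delta t^{1+\alpha}$ (worst at $n=1$), and $\Delta t^2\int_0^{t_{n-1}}(t_n-s)^{\alpha-1}s^{\alpha-1}\,{\rm d}s\lesssim\Delta t^2t_n^{2\alpha-1}$. The term $\Delta t^2t_n^{\alpha-1}|\varphi'(0)|$ is also controlled, since $\Delta t\sum_n(\Delta t^2t_n^{\alpha-1})^2\lesssim\Delta t^{2+2\alpha}$.

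The difficulty is that these errors appear paired with $\|\bar{\bm\theta}_n\|_a$, so $\mathcal B$ must be controlled. We will use Young's inequality, $C\Delta t\,|\varepsilon_n|^2+\mu\Delta t\|\bar{\bm\theta}_n\|_a^2$. Summing, $\Delta t\sum_n|\varepsilon_n|^2\lesssim\Delta t^{2+2\alpha}$, which fixes the rate $\Delta t^{1+\alpha}$. The first CN residual $\frac1{\Delta t}\int F-\bar F$ near $t=0$ similarly uses $\|F_{tt}\|\lesssim t^{\alpha-1}$ and contributes at the same order.

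\textbf{Step 5.} Apply discrete Gronwall to obtain $\|\bm\theta_n\|\le C(h^{\min\{k+1,l+1\}}+\Delta t^{1+\alpha})$. Conclude with the triangle inequality and the projection estimate for $\bm\xi_n$.
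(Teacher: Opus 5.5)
\textbf{Gap: the memory trapezoidal residual in Step 4.} Integrating over $[t_{n-1},t_n]$ in Step 1 creates the residual $r_n=\frac1{\Delta t}\int_{t_{n-1}}^{t_n}F\,{\rm d}t-\frac12\bigl(F(t_n)+F(t_{n-1})\bigr)$. Your bound for it relies on $\|F_{tt}\|\lesssim t^{\alpha-1}$, which is false in general. We have
\[
F_{tt}(t)=K'(t)\bm u(0)+K(t)\bm u_t(0)+\int_0^tK(t-s)\bm u_{tt}(s)\,{\rm d}s,\qquad K'(t)\sim t^{\alpha-2}.
\]
So unless $\bm u(0)=\bm 0$, the residual is of exact order $\Delta t^{\alpha}$ at $n=1$. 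For example, with $\bm u\equiv\bm u_0$ and $\lambda=0$ one gets $r_1=\frac{(1-\alpha)\Delta t^{\alpha}}{2(1+\alpha)\Gamma(1+\alpha)}\bm u_0$. For $n\ge2$ it is of order $\Delta t^2t_n^{\alpha-2}$. The starting weight $\rho_n\bm u(0)$ makes the CQ error exact on the constant part at grid points, but it does nothing for the trapezoidal rule applied to $F$ on each subinterval. With your Young step $C\Delta t\|r_n\|_a^2+\mu\Delta t\|\bar{\bm\theta}_n\|_a^2$, the $n=1$ term alone contributes $\Delta t^{1+2\alpha}$ to $\|\bm\theta_N\|^2$. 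You therefore only get $\|\bm\theta_N\|=\mathcal O(\Delta t^{1/2+\alpha})$. This is not an artifact of your splitting: under \eqref{regularity-condition} the scheme has a genuine $\mathcal O(\Delta t^{\alpha})$ consistency defect at $n=1$. In the paper's splitting this defect appears as $\bar{\bm u}_t(t_1)-d_t\bm u^1$ instead.

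\textbf{Missing idea: pair the singular residual with $\max_n\|\bar{\bm\theta}_n\|$.} The paper does not integrate in time. It evaluates the equation at $t_n$ and $t_{n-1}$ and averages, so the memory residual is purely the CQ error $\breve\varepsilon^{(\alpha,\lambda)}$. The singular time-derivative residual is never absorbed into $\mu\Delta t\|\bar{\bm\theta}_n\|_a^2$. Instead it is paired with $\max_n\|\bar{\bm\theta}_n\|$ (the index $E$), using
$\Delta t\int_0^{\Delta t}\|\bm u_{tt}\|\,{\rm d}s+\Delta t^2\int_{\Delta t}^{T}\|\bm u_{ttt}\|\,{\rm d}s\lesssim\Delta t^{1+\alpha}$.
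An $L^1$-in-time bound on the residual, tested against an $L^\infty$-in-time norm of $\bm\theta$, gives $\Delta t^{1+\alpha}$; an $L^2$-in-time absorption cannot. You can repair your route the same way. Write $\mathcal B(r_n,\bar{\bm\theta}_n)\le\|\mathscr B r_n\|\,\|\bar{\bm\theta}_n\|$, which needs $\mathscr B\bm u_0\in L^2$. Then use $\Delta t\sum_n\|\mathscr B r_n\|\lesssim\Delta t^{1+\alpha}$ together with $\max_n\|\bar{\bm\theta}_n\|$.

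\textbf{Secondary issue in Step 3.} Your claim that the $\mathcal A$ and trilinear residuals are $\mathcal O(\Delta t^4)$ ``exactly as before'' relies on $\int_0^T\|\bm u_{tt}\|_a^2\,{\rm d}t<\infty$. That integral diverges for $\alpha\le\frac12$ when $\|\bm u_{tt}\|\sim t^{\alpha-1}$. These terms need pointwise-in-time bounds on $\|\bm u_{tt}\|_a$, which go beyond the $L^2$ statement in \eqref{regularity-condition}. The paper's pointwise evaluation avoids these residuals entirely.
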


		\begin{proof}
			We let $ t=t_n $ and $ t=t_{n-1} $ in \eqref{varitional-formula} to obtain
\begin{align}\label{exact-solution-eq}
	\begin{split}
			&(\bar{\bm{u}}_t(t_n),\bm{v}_h)+\mathcal A(\bar{\bm{u}}^n,\bm{v}_h)+
			\frac{1}{2}\mathcal B(\int_{0}^{t_n}K(t_n-s)\bm{u}(s) {\rm{d}}s+\int_{0}^{t_{n-1}}K(t_{n-1}-s)\bm{u}(s){\rm{d}}s,\bm{v}_h)\\
			&+\frac{1}{2}(\mathcal C(\bm{u}^n,\bm{u}^n,\bm{v}_h)+\mathcal C(\bm{u}^{n-1},\bm{u}^{n-1},\bm{v}_h))-\mathcal D(\bm{v}_h,\bar{p}^n)=(\bar{\bm{f}}^n,\bm{v}_h),
	\end{split}
\end{align}
where 
\begin{align*}
	\bar{\bm{f}}^n=\frac{\bm{f}(t_n)+\bm{f}(t_{n-1})}{2}.
\end{align*}
Next, introducing the following notations:	
\begin{align*}
	\bm{e}_n=\bm{u}_h^n-\bm{u}_n,\ \bm{\theta}_n=\bm{u}_h^n-\bm{P}_h\bm{u}^n,\ \bm{\xi}_n=\bm{P}_h\bm{u}^n-\bm{u}^n,	
\end{align*}
 and subtracting \eqref{exact-solution-eq} from \eqref{fully-discretization--singular} to obtain 
\begin{align*}
	&(d_t\bm{e}_n,\bm{v}_h)+\mathcal A(\bar{\bm{e}}_n,\bm{v}_h)+\mathcal B(\Delta t^{\alpha}\sum_{p=0}^{n}\omega_p^{(\alpha,\lambda)}\bar{\bm{e}}_{n-p},\bm{v}_h)\\
	&=(\bar{\bm{u}}_t(t_n)-d_t\bm{u}^n,\bm{v}_h)+
	\frac{1}{2}(\mathcal C(\bm{u}^n,\bm{u}^n,\bm{v}_h)+\mathcal C(\bm{u}^{n-1},\bm{u}^{n-1},\bm v_h))-\mathcal C(\bar{\bm{u}}_h^n,\bar{\bm{u}}_h^n,\bm{v}_h)\\
	&+\frac{1}{2}\mathcal B(\int_{0}^{t_n}K(t_n-s)\bm{u}(s) {\rm{d}}s-Q_n^{(\alpha,\lambda)}(\bm{u})+\int_{0}^{t_{n-1}}K(t_{n-1}-s)\bm{u}(s) {\rm{d}}s-Q_{n-1}^{(\alpha,\lambda)}(\bm{u}),\bm{v}_h)\\
	&+\mathcal D(\bm{v}_h,\bar{p}_h^n-\bar{p}^n)+(\bm{f}(\bar{t}_n)-\bar{\bm{f}}^n,\bm{v}_h).
\end{align*}
Noticing that $ \bm{e}_n=\bm{\theta}_n+\bm{\xi}_n $, it follows that
\begin{align}\label{error-eq-fem-singular}
	\begin{split}
			&(d_t\bm{\theta}_n,\bm{v}_h)+\mathcal A(\bar{\bm{\theta}}_n,\bm{v}_h)+\mathcal B(\Delta t^{\alpha}\sum_{p=0}^{n}\omega_{p}^{(\alpha,\lambda)}\bar{\bm{\theta}}_{n-p},\bm{v}_h)=
			(\bar{\bm{u}}_t(t_n)-d_t\bm{u}^n,\bm{v}_h)\\
			&+\frac{1}{2}\mathcal B(\int_{0}^{t_n}K(t_n-s)\bm{u}(s) {\rm{d}}s-Q_n^{(\alpha,\lambda)}(\bm{u})+\int_{0}^{t_{n-1}}K(t_{n-1}-s)\bm{u}(s) {\rm{d}}s-Q_{n-1}^{(\alpha,\lambda)}(\bm{u}),\bm{v}_h)\\
			&+\frac{1}{2}(\mathcal C(\bm{u}^n,\bm{u}^n,\bm{v}_h)+\mathcal C(\bm{u}^{n-1},\bm{u}^{n-1},\bm v_h))-\mathcal C(\bar{\bm{u}}_h^n,\bar{\bm{u}}_h^n,\bm{v}_h)
			+\mathcal D(\bm{v}_h,\bar{p}_h^n-\bar{p}^n)\\
			&+(\bm{f}(\bar{t}_n)-\bar{\bm{f}}^n,\bm{v}_h)
			-\mathcal A(\bar{\bm{\xi}}_n,\bm{v}_h)-\mathcal B(\Delta t^{\alpha}\sum_{p=0}^{n}\omega_{p}^{(\alpha,\lambda)}\bar{\bm{\xi}}_{n-p},\bm v_h).
		\end{split}
\end{align}
	We take $ \bm{v}_h=2\Delta t\bar{\bm{\theta}}_n $ in \eqref{error-eq-fem-singular}  and sum on $ n $ from $ 1 $ to $ N $    to arrive at
	\begin{align*}
		&\|\bm{\theta}_N\|^2-\|\bm{\theta}_0\|^2+2\Delta t\sum_{n=1}^{N}\|\bar{\bm{\theta}}_n\|_a^2
		+2\Delta t^{1+\alpha}\sum_{n=1}^{N}\sum_{p=0}^{n}\omega_{p}^{(\alpha,\lambda)}\mathcal B(\bar{\bm{\theta}}_{n-p},\bar{\bm{\theta}}_n)\\
		&=2\Delta t\sum_{n=1}^{N}(\bar{\bm{u}}_t(t_n)-d_t\bm{u}^n,\bar{\bm{\theta}}_n)
		+\Delta t\sum_{n=1}^{N}\mathcal B(\int_0^{t_n}K(t_n-s)\bm{u}(s) {\rm{d}}s-Q_n^{(\alpha,\lambda)}(\bm{u}),\bar{\bm{\theta}}_n)\\
		&+\Delta t\sum_{n=1}^{N}\mathcal B(\int_0^{t_{n-1}}K(t_{n-1}-s)\bm{u}(s) {\rm{d}}s-Q_{n-1}^{(\alpha,\lambda)}(\bm{u}),\bar{\bm{\theta}}_n)
		+\Delta t\sum_{n=1}^{N}\mathcal C(\bm{u}^n,\bm{u}^n,\bar{\bm{\theta}}_n)\\
		&+\Delta t\sum_{n=1}^{N}\mathcal C(\bm{u}^{n-1},\bm{u}^{n-1},\bar{\bm{\theta}}_n)
		-2\Delta t\sum_{n=1}^{N}\mathcal C(\bar{\bm{u}}_h^n,\bar{\bm{u}}_h^n,\bar{\bm{\theta}}_n)
		+2\Delta t\sum_{n=1}^{N}\mathcal D(\bar{\bm{\theta}}_n,\bar{p}_h^n-\bar{p}^n)\\
		&+2\Delta t\sum_{n=1}^{N}(\bm{f}(\bar{t}_n)-\bar{\bm{f}}^n,\bar{\bm{\theta}}_n)
		-2\Delta t\sum_{n=1}^{N}\mathcal A(\bar{\bm{\xi}}_n,\bar{\bm{\theta}}_n)-2\Delta t^{1+\alpha}\sum_{n=1}^{N}\sum_{p=0}^{n}\omega_p^{(\alpha,\lambda)}\mathcal B(\bar{\bm{\xi}}_{n-p},\bar{\bm{\theta}}_n).
	\end{align*}
			Let $ E  $ be an integer such that $ \displaystyle\|\bar{\bm{\theta}}_E\|=\max_{1 \leq n \leq N}\|\bar{\bm{\theta}}_n\| $, then we have 
			\begin{align*}
				&\|\bm{\theta}_E\|^2-\|\bm{\theta}_0\|^2+2\Delta t\sum_{n=1}^{E}\|\bar{\bm{\theta}}_n\|_a^2
				+2\Delta t^{1+\alpha}\sum_{n=1}^{E}\sum_{p=0}^{n}\omega_{p}^{(\alpha,\lambda)}\mathcal B(\bar{\bm{\theta}}_{n-p},\bar{\bm{\theta}}_n)\\
				&=2\Delta t\sum_{n=1}^{E}(\bar{\bm{u}}_t(t_n)-d_t\bm{u}^n,\bar{\bm{\theta}}_n)+\Delta t\sum_{n=1}^{E}\mathcal B(\int_0^{t_n}K(t_n-s)\bm{u}(s) {\rm{d}}s-Q_n^{(\alpha,\lambda)}(\bm{u}),\bar{\bm{\theta}}_n)\\
				&+\Delta t\sum_{n=1}^{E}\mathcal B(\int_0^{t_{n-1}}K(t_{n-1}-s)\bm{u}(s) {\rm{d}}s-Q_{n-1}^{(\alpha,\lambda)}(\bm{u}),\bar{\bm{\theta}}_n)+\Delta t\sum_{n=1}^{E}\mathcal C(\bm{u}^n,\bm{u}^n,\bar{\bm{\theta}}_n)\\
				&+\Delta t\sum_{n=1}^{E}\mathcal C(\bm{u}^{n-1},\bm{u}^{n-1},\bar{\bm{\theta}}_n)
				-2\Delta t\sum_{n=1}^{E}\mathcal C(\bar{\bm{u}}_h^n,\bar{\bm{u}}_h^n,\bar{\bm{\theta}}_n)
				+2\Delta t\sum_{n=1}^{E}\mathcal D(\bar{\bm{\theta}}_n,\bar{p}_h^n-\bar{p}^n)\\
				&+2\Delta t\sum_{n=1}^{E}(\bm{f}(\bar{t}_n)-\bar{\bm{f}}^n,\bar{\bm{\theta}}_n)
				-2\Delta t\sum_{n=1}^{E}\mathcal A(\bar{\bm{\xi}}_n,\bar{\bm{\theta}}_n)-2\Delta t^{1+\alpha}\sum_{n=1}^{E}\sum_{p=0}^{n}\omega_p^{(\alpha,\lambda)}\mathcal B(\bar{\bm{\xi}}_{n-p},\bar{\bm{\theta}}_n)\\
				&=\sum_{i=1}^{10}R_i.
			\end{align*}
			Next, we turn to estimate $ \{R_i\}_{i=1}^{10} $. For the term $ R_1 $, we use integration by parts and simple computation to obtain 
			\begin{align*}
				\Delta t\bar{\bm{u}}_t(t_1)-\Delta td_t\bm{u}^1&=\frac{\Delta t}{2}(\bm{u}_t(t_1)+\bm{u}_t(t_0))-\int_{0}^{\Delta t}\bm{u}_t(t) {\rm{d}}t\\
				&=\int_{0}^{\Delta t}(\int_t^{\Delta t}\frac{1}{2}\bm{u}_{tt}(s) {\rm{d}}s+\int_{t}^{0}\frac{1}{2}\bm{u}_{tt}(s) {\rm{d}}s) {\rm{d}}t\\
				&\le \Delta t\int_0^{\Delta t}|\bm{u}_{tt}(s)| {\rm{d}}s,\\
				\Delta t\bar{\bm{u}}_t(t_n)-\Delta td_t\bm{u}^n&=
				\Delta t\bar{\bm{u}}_t(t_n)-\int_{t_{n-1}}^{t_n}\bm{u}_t(s) {\rm{d}}s\\
				&=\frac{1}{2}\int_{t_{n-1}}^{t_n}(t-t_{n-1})(t_n-t)\bm{u}_{ttt}(s) {\rm{d}}s\\
				&\le C\Delta t^2\int_{t_{n-1}}^{t_n}|\bm{u}_{ttt}(s)| {\rm{d}}s,
			\end{align*}
			then we can estimate $ R_1 $ by Young inequality as follows:
			\begin{align*}
				R_1&=2\Delta t(\bar{\bm{u}}_t(t_1)-d_t\bm{u}^1,\bar{\bm{\theta}}_1)
				+2\Delta t\sum_{n=2}^{E}(\bar{\bm{u}}_t(t_n)-d_t\bm{u}^n,\bar{\bm{\theta}}_n)\\
				&\le C(\Delta t\int_{0}^{\Delta t}\|\bm{u}_{tt}(s)\| {\rm{d}}s+\Delta t^2\int_{\Delta t}^{t_E}\|\bm{u}_{ttt}(s)\| {\rm{d}}s)\|\bar{\bm{\theta}}_E\|\\
				&\le C\Delta t^2(\int_{0}^{\Delta t}\|\bm{u}_{tt}(s)\| {\rm{d}}s)^2+\Delta t^4(\int_{\Delta t}^{t_E}\|\bm{u}_{ttt}(s)\| {\rm{d}}s)^2+C_1\|\bar{\bm{\theta}}_E\|^2.
			\end{align*}
			It follows from \cref{quadrature-error}   that
			\begin{align*}
				R_2+R_3&\le C\Delta t\sum_{n=1}^{E}(\Delta t^2t_n^{\alpha-1}\|\bm{u}_t(0)\|_a
				+\Delta t^{\alpha+1}\int_{t_{n-1}}^{t_n}\|\bm{u}_{tt}(s)\|_a {\rm{d}}s\\
				&+\Delta t^2\int_{0}^{t_{n-1}}(t_n-s)^{\alpha-1}\|\bm{u}_{tt}(s)\|_a\ {\rm{d}}s)\|\bar{\bm{\theta}}_n\|_a\\
				&\le C\Delta t^5\sum_{n=1}^{E}\|\bm{u}_t(0)\|_a^2
				+C\Delta t^{2\alpha+3}\sum_{n=1}^{E}(\int_{t_{n-1}}^{t_n}\|\bm{u}_{tt}\|_a{\rm d}s)^2\\
				&+C\Delta t^5\sum_{n=1}^{E}(\int_{0}^{t_{n-1}}(t_n-s)^{\alpha-1}\|\bm{u}_{tt}(s)\|_a\ {\rm{d}}s)^2
				+\frac{\mu\Delta t}{6}\sum_{n=1}^{E}\|\bar{\bm{\theta}}_n\|_a^2.
			\end{align*}
			Using the same technique in the proof in \cref{error estimate-fem--1} for the terms $ R_4+R_5+R_6 $, we can reach 
			\begin{align*}
				\sum_{i=4}^6R_i&\le C\Delta t^5\sum_{n=1}^{E}\|\nabla(d_t\bm{u}^n)\|^4+\frac{\mu\Delta t}{6}\sum_{n=1}^{E}\|\bar{\bm{\theta}}_n\|_a^2+C\Delta t\sum_{n=1}^{E}h^{2k}|\bm{u}^n|_{k+1}^2\\
				&\quad+C\Delta t\sum_{n=1}^{E}\|\bar{\bm{\theta}}_n\|^2
				+C\Delta t^4\sum_{n=1}^{E}\int_{t_{n-1}}^{t_n}(\|\bm{u}_{tt}\|_0^2\|\bm{u}\|_0^2+
					\|\bm{u}_t\|_2^2\|\bm{u}_t\|_0^2){\rm d}t.
			\end{align*}
		Using the fact
	\begin{align*}
		\mathcal{D}(\bar{\bm{\theta}}_n,\bar{p}_h^n)=0=\mathcal{D}(\bar{\bm{\theta}}_n,\rho_h(\bar{p}^n)),
	\end{align*}	
	
			we use the property of projection of $ \rho_h $  to obtain
			\begin{align*}
				R_7&=2\Delta t\sum_{n=1}^{E}\mathcal D(\bar{\bm{\theta}}_n,\rho_h(\bar{p}^n)-\bar{p}^n)
				\le C\Delta t\sum_{n=1}^{E}h^{l+1}\|\bar{\bm{\theta}}_n\|_a|p^n|_{l+1}\\
				&\le C\Delta t\sum_{n=1}^{E}h^{2l+2}|p^n|_{l+1}^2+\frac{\mu\Delta t}{6}\sum_{n=1}^{E}\|\bar{\bm{\theta}}_n\|_a^2.
			\end{align*}
			 Young inequality applying to $R_8$ and $R_9$ to deduce that 
			\begin{align*}
				R_8&=2\Delta t\sum_{n=1}^{E}(\bm{f}(\bar{t}_n)-\bar{\bm{f}}^n,\bar{\bm{\theta}}_n))\\
				&=-\Delta t\sum_{n=1}^{E}(\int_{t_{n-1}}^{\bar{t}_n}(t-t_{n-1})\bm{f}_{tt}\ {\rm{d}}t+\int_{\bar{t}_n}^{t_n}(t_n-t)\bm{f}_{tt}\ {\rm{d}}t,\bar{\bm{\theta}}_n)\\
				&\le C\Delta t^{5/2}\sum_{n=1}^{E}(\int_{t_{n-1}}^{t_n}\|\bm{f}_{tt}\|^2{\rm{d}}t)^{1/2}\|\bar{\bm{\theta}}_n\|\\
				&\le C\Delta t^4\int_0^{t_E}\|\bm{f}_{tt}\|^2{\rm{d}}t+\frac{\mu\Delta t}{6}\sum_{n=1}^{E}\|\bar{\bm{\theta}}_n\|_a^2,\\
				R_9&\le C\Delta t\sum_{n=1}^{E}\|\bar{\bm{\theta}}_n\|_ah^k|\bm{u}^n|_{k+1}
				\le C\Delta t\sum_{n=1}^{E}h^{2k}|\bm{u}^n|_{k+1}^2+\frac{\mu\Delta t}{6}\sum_{n=1}^{E}\|\bar{\bm{\theta}}_n\|_a^2.
			\end{align*}
			For the term $ R_{10} $, we know from \cite[Lemma 3]{MR4626456} that $ \omega_p^{(\alpha,\lambda)}=\mathcal{O}(\Delta t^{1-\alpha}) $, so we can obtain
			\begin{align*}
				R_{10}&\le C\Delta t\sum_{n=1}^{E}h^k|\bm{u}^n|_{k+1}\|\bar{\bm{\theta}}_n\|_a\le  C\Delta t\sum_{n=1}^{E}h^{2k}|\bm{u}^n|_{k+1}^2+\frac{\mu\Delta t}{6}\sum_{n=1}^{E}\|\bar{\bm{\theta}}_n\|_a^2.
			\end{align*}
			Combine all estimations for $ \{R_i\}_{i=1}^{10} $ and notice that $ \|\theta_0\|\le h^k|\bm{u}(0)|_{k+1} $, it follows that 
			\begin{align*}
				&\|\bm{\theta}_E\|^2-\|\bm{\theta}_0\|^2+2\Delta t\sum_{n=1}^{E}\|\bar{\bm{\theta}}_n\|_a^2+2\Delta t^{1+\alpha}\sum_{n=1}^{E}\sum_{p=0}^{n}\omega_p^{(\alpha,\lambda)}\mathcal B(\bar{\bm{\theta}}_{n-p},\bar{\bm{\theta}}_n)\\
				&\le C\Delta t^2(\int_0^{\Delta t}\|\bm{u}_{tt}(s)\| \ {\rm d}s)^2
				+C\Delta t^4(\int_{\Delta t}^{t_E}\|\bm{u}_{ttt}(s)\| \ {\rm d}s)^2+C\Delta t^5\sum_{n=1}^{E}\|\bm{u}_t(0)\|_a^2\\
				&\quad+ C\Delta t^{2\alpha+3}\sum_{n=1}^{E}(\int_{t_{n-1}}^{t_n}\|\bm{u}_{tt}(s)\|_a \ {\rm d}s)^2
				+C\Delta t^5\sum_{n=1}^{E}(\int_{0}^{t_{n-1}}(t_n-s)^{\alpha-1}\|\bm{u}_{tt}\|_a \ {\rm d}s)^2\\
				&\quad +C_1\Delta t\|\bm{\theta}_E\|^2
				+\mu\Delta t\sum_{n=1}^{E}\|\bar{\bm{\theta}}_n\|_a^2+C\Delta t^5\sum_{n=1}^{E}\|\nabla(d_t\bm{u}^n)\|^4
				+C\Delta t\sum_{n=1}^{E}h^{2k}|\bm{u}^n|_{k+1}^2
				\\
				&\quad+C\Delta t\sum_{n=1}^{E}h^{2l+2}|p^n|_{l+1}^2
					+C\Delta t^4\sum_{n=1}^{E}\int_{t_{n-1}}^{t_n}(\|\bm{u}_{tt}\|_0^2\|\bm{u}\|_0^2+
					\|\bm{u}_t\|_2^2\|\bm{u}_t\|_0^2){\rm d}t
				.
			\end{align*}
		By applying \cref{positive-define property} and the same technique for the right hand term $ C\Delta t^5\sum_{n=1}^{E}\|\nabla(d_t\bm{u}^n)\|^4 $, we can  get
		\begin{align*}
			&C\Delta t^5\sum_{n=1}^{E}\|\nabla(d_t\bm{u}^n)\|^4 \le C\Delta t^4\int_0^T\|\partial_t(\nabla\bm{u})\|^4dt,\\
			&2\Delta t^{1+\alpha}\sum_{n=1}^{E}\sum_{p=0}^{n}\omega_p^{(\alpha,\lambda)}\mathcal B(\bar{\bm{\theta}}_{n-p},\bar{\bm{\theta}}_n)\ge 0.
		\end{align*}
	By using regularity condition for $ \bm{u} $ in \eqref{regularity-condition},  we can obtain 
	\begin{align}\label{eq6--10}
			&\|\bm{\theta}_E\|^2-\|\bm{\theta}_0\|^2+2\Delta t\sum_{n=1}^{E}\|\bar{\bm{\theta}}_n\|_a^2+2\Delta t^{1+\alpha}\sum_{n=1}^{E}\sum_{p=0}^{n}\omega_p^{(\alpha,\lambda)}\mathcal B(\bar{\bm{\theta}}_{n-p},\bar{\bm{\theta}}_n)\notag \\
			&\le C\Delta t^{2+2\alpha}+C_1\Delta t\|\bm{\theta}_E\|^2+\mu\Delta t\sum_{n=1}^{E}\|\bar{\bm{\theta}}_n\|_a^2+C\Delta t\sum_{n=1}^{E}h^{2k}|\bm{u}^n|_{k+1}^2+C\Delta t\sum_{n=1}^{E}h^{2l+2}|p^n|_{l+1}^2.
	\end{align}
Finally	we choose $ C_1\in(0,\frac{1}{2}) $ and $ \mu\in(0,1) $, we can reach our final conclusion by  applying Gronwall's inequality to \eqref{eq6--10} . 
		\end{proof}
		
		Finally, we can derive  error bound between the solutions of \eqref{hat-euqation-singular} and \eqref{OldryodModule-general-singular}.
		
		\begin{theorem}\label{main_result2}
		
		Let $(\bm{u}(t),p(t))$ and $(\widehat{\bm{u}}_h^n,\widehat p_h^n)$ be the solutions of \eqref{OldryodModule-general-singular} and \eqref{hat-euqation-singular}, respectively. Assume that the tolerance $\mathtt{tol}$ is used in both $p$-truncation and singular-value truncation throughout the incremental SVD procedure. Under \Cref{assumption-singular}, if
		\[
		\bm u\in L^\infty(0,T;H^{k+1}(\Omega)^d),\qquad p\in L^\infty(0,T;H^{l+1}(\Omega)),
		\]
		and $\bm u$ satisfies \eqref{regularity-condition}, then there exists a constant $C_T>0$, independent of $h$, $\Delta t$, and $\mathtt{tol}$, such that
		\begin{align*}
			\|\bm{u}(t_n)-\widehat{\bm{u}}_h^n\|
			\le C_T\Bigl(h^{\min\{k+1,l+1\}}+\Delta t^{1+\alpha}+(1+T_{\rm sv})\sqrt{\sigma(S)}\,\mathtt{tol}\Bigr).
		\end{align*}
		
		\end{theorem}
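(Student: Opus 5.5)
The plan mirrors the nonsingular case: split by the triangle inequality,
\begin{align*}
\|\bm u(t_n)-\widehat{\bm u}_h^n\|\le \|\bm u(t_n)-\bm u_h^n\|+\|\bm u_h^n-\widehat{\bm u}_h^n\|,
\end{align*}
where $\bm u_h^n$ solves \eqref{fully-discretization--singular}. \Cref{error estimate-fem--2} bounds the first term by $C(h^{\min\{k+1,l+1\}}+\Delta t^{1+\alpha})$. The remaining work is a weakly singular analogue of \Cref{hat-fem-error}, followed by \Cref{hat-tilde-error}. That lemma is purely algebraic in the compressed snapshots, so it applies verbatim and gives $(T_{\rm sv}+1)\sqrt{\sigma(S)}\,\mathtt{tol}$.

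For the perturbation step, set $\widehat{\bm e}_n=\bm u_h^n-\widehat{\bm u}_h^n$ and $\widetilde{\bm e}_{n-1,j}=\bm u_h^j-\widetilde{\bm u}_h^{n-1,j}$. Subtracting \eqref{hat-euqation-singular} from \eqref{fully-discretization--singular} produces the history term
\begin{align*}
\Delta t^{\alpha}\sum_{p=1}^n\omega_p^{(\alpha,\lambda)}\bar{\widetilde{\bm e}}_{n-1,n-p}+\Delta t^{\alpha}\omega_0^{(\alpha,\lambda)}\bar{\widehat{\bm e}}_n.
\end{align*}
The $\bar\rho_n$ terms cancel because $\widehat{\bm u}_h^0=\bm u_h^0$. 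I would rewrite this as $\Delta t^{\alpha}\sum_{p=0}^n\omega_p^{(\alpha,\lambda)}\bar{\widehat{\bm e}}_{n-p}+\Delta t^\alpha\sum_{p=1}^n\omega_p^{(\alpha,\lambda)}\bar{\bm\eta}_{n-1,n-p}$, where $\bm\eta_{n-1,j}=\widehat{\bm u}_h^j-\widetilde{\bm u}_h^{n-1,j}$ is the pure compression defect. Testing with $2\Delta t\bar{\widehat{\bm e}}_n$ and summing over $n$ gives three contributions:
\begin{itemize}
\item The first sum is handled by \Cref{positive-define property} under (A1)--(A2). It is nonnegative and can be dropped.
\item The nonlinear terms are treated exactly as $R_2+R_3$ in \Cref{hat-fem-error}, via $\mathcal C(\bar{\widehat{\bm e}}_n,\bar{\bm u}_h^n,\bar{\widehat{\bm e}}_n)$ and the stability bound \Cref{stability--2}. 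This yields $C\Delta t\sum\|\bar{\widehat{\bm e}}_n\|^2+\mu\Delta t\sum\|\bar{\widehat{\bm e}}_n\|_a^2$.
\item The defect term is bounded by Cauchy--Schwarz and the continuity of $\mathcal B$ in $\|\cdot\|_a$, giving $C\Delta t\bigl(\Delta t^\alpha\sum_{p=1}^n|\omega_p^{(\alpha,\lambda)}|\bigr)\max_j\|\bm\eta_{n-1,j}\|_a\|\bar{\widehat{\bm e}}_n\|_a$.
\end{itemize}
Young's inequality then turns the defect contribution into $\mu\Delta t\|\bar{\widehat{\bm e}}_n\|_a^2+C\Delta t\max_j\|\bm\eta_{n-1,j}\|_a^2$.

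The main obstacle is a bound, uniform in $n\le N$, on $\Delta t^\alpha\sum_{p=0}^n|\omega_p^{(\alpha,\lambda)}|$. This plays the role of $K_0$ in the nonsingular case. My approach is to use the generating function of $\omega^{(\alpha,0)}$, namely $((1+z)/(2(1-z)))^{\alpha}$, or the asymptotics in \cite[Lemma 3]{MR4626456}. These should give $\omega_p^{(\alpha,0)}=\mathcal O(p^{\alpha-1})$ with eventually fixed sign. Then $\Delta t^\alpha\sum_{p\le N}|\omega_p|\le C\Delta t^\alpha N^\alpha=CT^\alpha$, and the factor $e^{-\lambda t_p}$ only helps. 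If only the cruder bound quoted in the paper is available, I would instead split off finitely many initial weights and sum the tail by comparison with $\int_0^T s^{\alpha-1}\,ds$.

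To close, choose $\mu$ small, absorb the $\|\cdot\|_a$ terms on the left, use $\widehat{\bm e}_0=0$, and apply discrete Gronwall. This gives
\begin{align*}
\|\bm u_h^N-\widehat{\bm u}_h^N\|\le C_T\max_{n}\max_{j\le n-1}\|\widetilde{\bm u}_h^{n-1,j}-\widehat{\bm u}_h^j\|_a .
\end{align*}
Inserting \Cref{hat-tilde-error} and combining with the first term yields the stated estimate, with $C_T$ independent of $h$, $\Delta t$ and $\mathtt{tol}$.
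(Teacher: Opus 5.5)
Your proposal is correct and follows the paper's proof essentially step for step. The steps match: the triangle inequality with \Cref{error estimate-fem--2}; subtracting \eqref{hat-euqation-singular} from \eqref{fully-discretization--singular}; dropping the memory term via \Cref{positive-define property}; bounding the compression defect through $\Delta t^{\alpha}\sum_{p}|\omega_p^{(\alpha,\lambda)}|\le CT^{\alpha}$, which the paper extracts from $\omega_n^{(\alpha,\lambda)}=\mathcal O(n^{\alpha-1})$; the same treatment of the trilinear term; Gronwall; and finally \Cref{hat-tilde-error}.

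One point to tighten, which the paper glosses over in the same way: bounding $\mathcal C(\bar{\widehat{\bm e}}_n,\bar{\bm u}_h^n,\bar{\widehat{\bm e}}_n)$ requires control of $\|\nabla\bar{\bm u}_h^n\|$ (e.g.\ from the error estimate plus an inverse inequality), not merely the $L^2$ bound of \Cref{stability--2}.
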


		\begin{proof}
			We notice that $ (\bm{u}_h^{n},p_h^{n}) $ and $ (\widehat{\bm{u}}_h^{n},\widehat p_h^{n}) $ satisfying  equations \eqref{fully-discretization--singular} and \eqref{hat-euqation-singular}, introducing  the following notations:
			\begin{align*}
				\widehat{\bm{e}}_i=\bm{u}_h^i-\widehat{\bm{u}}_h^i ,\  \widetilde{\bm{e}}_{i,j}=\bm{u}_h^j-\widetilde{\bm{u}}_h^{i,j},
			\end{align*}  
			subtracting  \eqref{hat-euqation-singular} from \eqref{fully-discretization--singular} to obtain
			\begin{align}\label{error_hat_tilde1-singular}
				\begin{split}
					&(d_t\widehat{\bm e}_{n},\bm{v}_h)
					+\mathcal A(\bar{\widehat{\bm{e}}}_n,\bm{v}_h)
					+\mathcal B
					(\Delta t^{\alpha}\sum_{p=1}^{n}\omega_p^{(\alpha,\lambda)}\bar{\widetilde{\bm e}}_{n-1,n-p}
					+\Delta t^{\alpha}\omega_0^{(\alpha,\lambda)}\bar{\widehat{\bm{e}}}_n+\bar{\rho}_n^{(\alpha,\lambda)}\widehat{\bm{e}}_0,\bm{v}_h)\\
					&\quad-\mathcal D(\bm{v}_h,\bar{p}_h^n-\bar{\widehat{p}}_h^n)+\mathcal C(\bar{\bm{u}}_h^n,\bar{\bm{u}}_h^n,\bm{v}_h)-\mathcal C(\bar{\widehat{\bm{u}}}_h^n,\bar{\widehat{\bm{u}}}_h^n,\bm{v}_h)=0.
				\end{split}
			\end{align}
			Taking $ \bm{v}_h=2\Delta t\widehat{\bm{e}}_{n} $ in $\eqref{error_hat_tilde1-singular}$, summing on $ n $ from $ n=1 $ to $ N $  and noticing $ \widehat{\bm{e}}_0=\bm{0} $, it follows that
			\begin{align}
				\begin{split}\label{sum-singular-isvd-error}
					&\|\widehat{\bm{e}}_N\|^2+2\Delta t\sum_{n=1}^{N}\|\widehat{\bm{e}}_n\|_a^2+2\Delta t^{1+\alpha}\sum_{n=1}^{N}\mathcal B(\sum_{p=0}^{n}\omega_p^{(\alpha,\lambda)}\bar{\widehat{\bm{e}}}_{n-p},\bar{\widehat{\bm{e}}}_n)\\
					&=2\Delta t^{1+\alpha}\sum_{n=1}^{N}\mathcal B(\sum_{p=1}^{n}\omega_p^{(\alpha,\lambda)}(\bar{\widehat{\bm{e}}}_{n-p}-\bar{\widetilde{\bm{e}}}_{n-1,n-p}),\bar{\widehat{\bm e}}_{n})\\
					&\quad +2\Delta t\sum_{n=1}^{N}\mathcal C(\bar{\widehat{\bm{u}}}_h^n,\bar{\widehat{\bm{u}}}_h^n,\bar{\widehat{\bm{e}}}_n)-2\Delta t\sum_{n=1}^{N}\mathcal C(\bar{\bm{u}}_h^n,\bar{\bm{u}}_h^n,\bar{\widehat{\bm{e}}}_n)\\
					&=R_1+R_2+R_3.
				\end{split}
			\end{align}
			Now we move to bound the terms $ \{R_i\}_{i=1}^{3} $. From \cite[Lemma 3]{MR4626456}, We know that $\omega_n^{(\alpha,\lambda)}=\mathcal{O}(n^{\alpha-1})  $.  Then by using Young inequality, we can estimate the term $ R_1 $ as follows:
			\begin{align*}
				R_1&=-2\Delta t^{1+\alpha}\sum_{n=1}^{N}\sum_{p=1}^{n}\omega_{p}^{(\alpha,\lambda)}\mathcal B(\bar{\widehat{\bm{e}}}_{n-p}-\bar{\widetilde{\bm{e}}}_{n-1,n-p},\bar{\widehat{\bm{e}}}_n)\\
				&\le C\Delta t\sum_{n=1}^{N}\max_{1 \leq j \leq n}\|\bar{\widehat{\bm{e}}}_{n-j}-\bar{\widetilde{\bm{e}}}_{n-1,n-j}\|_a\|\bar{\widehat{\bm{e}}}_n\|_a\\
				&\le C\Delta t\sum_{n=1}^{N}\max_{1 \leq j \leq n}\|\bar{\widehat{\bm{e}}}_{n-j}-\bar{\widetilde{\bm{e}}}_{n-1,n-j}\|_a^2
				+\frac{\mu}{2}\Delta t\sum_{n=1}^{N}\|\widehat{\bm{e}}_n\|_a^2\\
				&\le CT\max_{1 \leq n \leq N}\max_{1 \leq j \leq n}\|\bar{\widehat{\bm{e}}}_{n-j}-\bar{\widetilde{\bm{e}}}_{n-1,n-j}\|_a^2
				+\frac{\mu}{2}\Delta t\sum_{n=1}^{N}\|\widehat{\bm{e}}_n\|_a^2,
			\end{align*}
			where $ \mu\in (0,1) $ is a positive constant and will be specified later. 
			For the term $ R_2+R_3 $, we use the same technique used in \cref{main_result} to estimate to obtain 
			\begin{align*}
				R_2+R_3=-2\Delta t\sum_{n=1}^{N}\mathcal C(\bar{\widehat{\bm{e}}}_n,\bar{\bm{u}}_h^n,\bar{\widehat{\bm{e}}}_n)
				\le C\Delta t\sum_{n=1}^{N}\|\bar{\widehat{\bm{e}}}_n\|^2+\frac{\mu}{2}\Delta t\sum_{n=1}^{N}\|\bar{\widehat{\bm{e}}}_n\|_a^2.
			\end{align*}
			Since the sequences $ \omega_{p}^{(\alpha,\lambda)} $ satisfy  the positive definite condition \eqref{oprator-sequence-positive}, we choose $ \mu =\frac{1}{2} $ and apply Gronwall's inequality to \eqref{sum-singular-isvd-error} to derive 
			\begin{align}\label{last-inequality}
				\|\widehat{\bm{e}}_N\|^2\le CTe^T\max_{1\le n\le N}\max_{1 \leq j \leq n}\|\bar{\widehat{\bm{e}}}_{n-j}-\bar{\widetilde{\bm{e}}}_{n-1,n-j}\|_a^2.	
			\end{align}
			Finally, applying \Cref{hat-tilde-error} to \eqref{last-inequality} to give our result.
		\end{proof}

		\section{Numerical experiments}\label{section 7}
		
		This section reports three numerical experiments illustrating the behavior of the compressed schemes \eqref{hat_equation-nonsingular} and \eqref{hat-euqation-singular}. The first two are manufactured-solution tests for nonsingular and weakly singular kernels, respectively, and are used to verify accuracy. The third is a planar $4{:}1$ contraction benchmark, included to check that the compression does not materially alter a representative viscoelastic flow field. Throughout, we use the Mini element ($P_{1b}$--$P_1$) for space discretization and Crank--Nicolson time stepping.

\begin{example}\label{numerical example--1}
			In this example, we  evaluate the performance of our new method
			concerning the nonsingular kernel $K(t)=\ln{(1+t)}  $ for the following equation:
			\begin{align*}
				\bm{u}_t-10\Delta \bm{u}-25\int_{0}^{t}\ln{(1+t-s)}\Delta \bm{u}(s) {\rm d}s+(\bm{u}\cdot\nabla)\bm{u}+\nabla p=\bm{f},
			\end{align*}
			with exact solution
			\begin{align*}
				u_1(x,y,t)&=5tx^2(x-1)^2y(y-1)(2y-1)+4\sin^2{(\pi x)}\sin{(\pi y)}\cos{(\pi y)},\\
				u_2(x,y,t)&=-5tx(x-1)(2x-1)y^2(y-1)^2-4\sin{(\pi x)}\cos{(\pi x)}\sin^2{(\pi y)},\\
				p(x,y,t)&=10(2x-1)(2y-1)\cos t.
			\end{align*}
		\begin{table}[h]
			\centering
			\begin{tabular}{c|c|c|c|c|c}
				\Xhline{1pt}
				$\frac{\sqrt{2}}{h}$  & $ \|\bm u_h^N-\bm u(T)\| $ & rate & $ \|\widehat {\bm u}_h^N-\bm u(T)\| $ & rate & $ \|\bm u_h^N-\widehat{\bm u}_h^N\| $\\ \Xhline{1pt}
				$ 20 $  &2.0946E-02     & -   & 2.0946E-02     & -   & 1.7721E-12      \\ \hline
				$ 30 $  & 9.3613E-03    & 1.9683   & 9.3613E-03      &1.9683  & 1.0597E-12      \\ \hline
				$ 40 $ &5.2699E-03      & 1.9972   & 5.2699E-03      & 1.9972   & 1.3167E-12     \\ \hline
				$ 50 $  & 3.3730E-03     & 1.9996    & 3.3730E-03      & 1.9996   & 2.3583E-12     \\ \hline
				$ 60$  & 2.3422E-03     & 2.0004    & 2.3422E-03     & 2.0004    & 1.0094E-12       \\ \hline
				$ 70 $  & 1.7205E-03       & 2.0011   &  1.7205E-03     & 2.0011    & 5.4567E-12      \\ \hline
				$ 80 $  &1.3171E-03       & 2.0009    & 1.3171E-03      & 2.0009   & 3.0597E-14     \\ \hline
				$ 90 $ &1.0405E-03      & 2.0014    &1.0405E-03     & 2.0014   & 5.7959E-12      \\ \hline
				$ 100 $ & 8.4277E-04      & 2.0004    &  8.4277E-04      & 2.0004  & 1.2823E-11      \\ \hline
				$ 110 $ &6.9643E-04      &2.0011    &6.9643E-04     & 2.0011   & 2.7895E-12     \\  \Xhline{1pt}
			\end{tabular}
			\caption{The convergence rates of velocity $ \bm{u} $ at final time $ T=1 $:   $\| \bm u_h^N-\bm u(T)\|$, $\|\widehat{\bm  u}_h^N-\bm u(T) \|$  and $ \|\bm{ u}_h^N-\widehat{\bm u}_h^N\| $ for $\Delta t = \frac{1}{2}h$  with  kernel $ K(t)=25\ln {(1+t)} $.}
			\label{new-table1}
		\end{table}
			\begin{table}[h]
			\centering
			\begin{tabular}{c|c|c|c|c|c}
				\Xhline{1pt}
				$\frac{\sqrt{2}}{h}$  & $ \|p_h^N-p(T)\| $ & rate & $ \|\widehat {p}_h^N-p(T)\| $ & rate & $ \|p_h^N-\widehat{p}_h^N\| $\\ \Xhline{1pt}
				$ 20 $  & 3.2636     & -   & 3.2636     & -   & 8.3330E-13       \\ \hline
				$ 30 $  & 1.7818    & 1.4926   &1.7818      &1.4926  & 1.9434E-12      \\ \hline
				$ 40 $ & 1.1666       &1.4722   & 1.1666      & 1.4722   & 2.5994E-12     \\ \hline
				$ 50 $  & 0.8433     &1.4543    & 0.8433       &1.4543   & 3.8030E-12     \\ \hline
				$ 60$  & 0.6487     & 1.4390   & 0.6487      & 1.4390   & 5.2958E-12       \\ \hline
				$ 70 $  &0.5206     & 1.4271   & 0.5206    & 1.4271    & 7.1786E-12      \\ \hline
				$ 80 $  &0.4309       & 1.4162    & 0.4309     & 1.4162  &8.7950E-12     \\ \hline
				$ 90 $ &0.3652      & 1.4045   & 0.3652      & 1.4045   &1.1529E-11      \\ \hline
				$ 100 $ & 0.3153      & 1.3945   & 0.3153    & 1.3945   &1.1225E-11       \\ \hline
				$ 110 $ &0.2763      &1.3853    & 0.2763     & 1.3853   & 1.7898E-11   \\   \Xhline{1pt}
			\end{tabular}
			\caption{The convergence rates of pressure $ p $ at final time $ T=1 $:   $\| p_h^N-p(T)\|$, $\|\widehat{p}_h^N-p(T) \|$  and $ \|p_h^N-\widehat{p}_h^N\| $ for $\Delta t = \frac{1}{2}h$   with  kernel $ K(t)=25\ln {(1+t)} $.}
			\label{new-table2}
		\end{table}
		\begin{figure}[h]
			\centering
			\begin{minipage}{0.49\linewidth}
				\centering
				\includegraphics[width=0.9\linewidth]{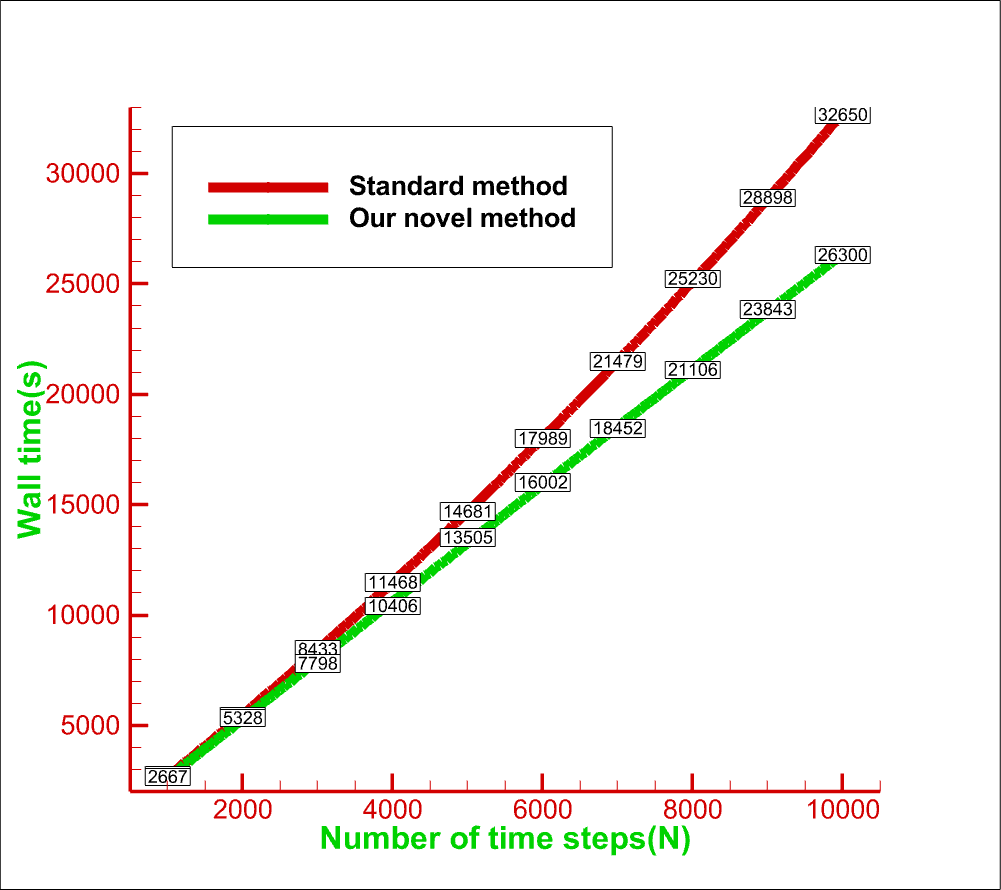}
			\end{minipage}
			\begin{minipage}{0.49\linewidth}
				\centering
				\includegraphics[width=0.9\linewidth]{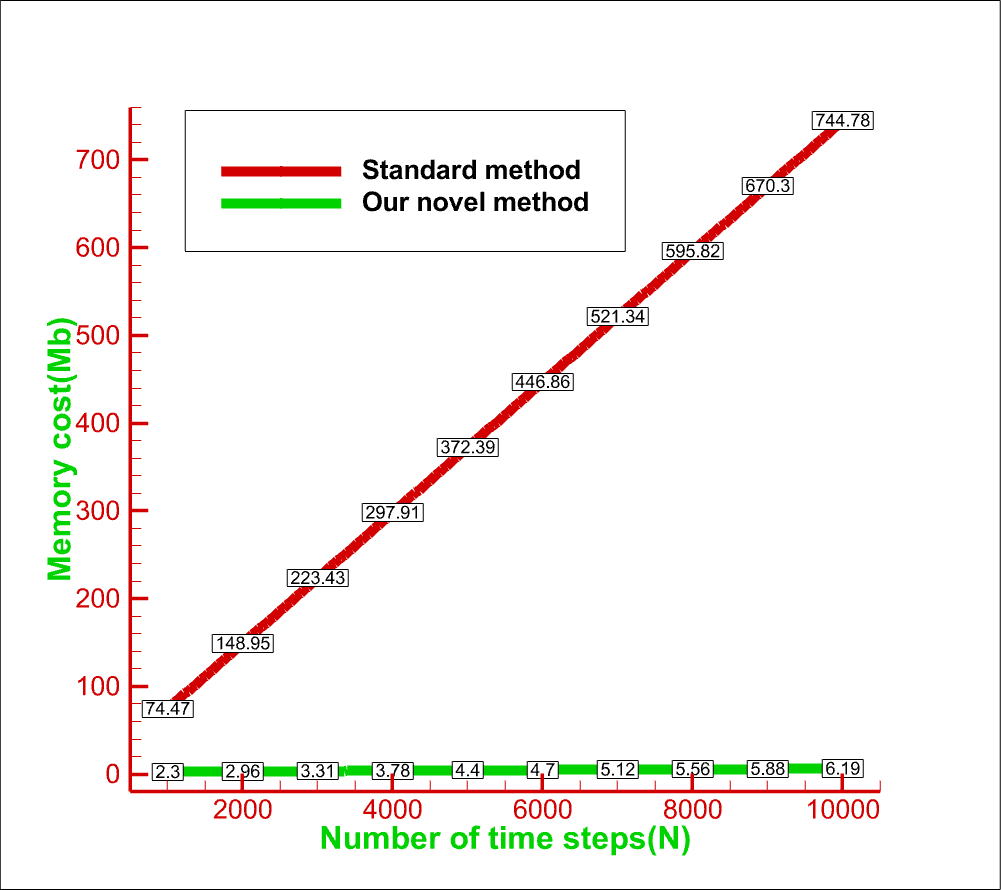}
			\end{minipage}
			\caption{A comparison of wall time and memory costs between the two algorithms is conducted for various time steps and $ K(t)=\ln {(1+t)} $, specifically when $h=1/50$ and $\Delta t=10^{-4}$.}
			\label{figure--1}
		\end{figure}

			For \Cref{numerical example--1}, \Cref{assumption-1} is readily verified. Tables~\ref{new-table1} and \ref{new-table2} report the $L^2$-errors for velocity and pressure at the final time $T=1$ with $\Delta t=\frac12 h$. The standard fully discrete scheme exhibits the expected second-order behavior for the velocity variable in this smooth manufactured example, while the compressed scheme with $\mathtt{tol}=10^{-12}$ is numerically indistinguishable from it. The discrepancy $\|\bm u_h^N-\widehat{\bm u}_h^N\|$ remains at the level predicted by the tolerance-dependent perturbation analysis, and for this tolerance is close to machine precision in practice. Figure~\ref{figure--1} shows the accompanying reductions in wall-clock time and memory footprint.
			
		\end{example}
		\begin{example}
			In this scenario, we contemplate the equation with  a weakly singular kernel.  The equation is given by 
			\begin{align*}
				\bm{u}_t-\Delta \bm{u}-\int_{0}^{t}K(t-s)\Delta \bm{u}(s) {\rm d}s+(\bm{u}\cdot\nabla)\bm{u}+\nabla p=\bm{f},
			\end{align*}
			with exact solution $ \bm{u}(x,y,t)=(u_1(x,y,t),u_2(x,y,t)) ^T$ and $ p(x,y,t) $ defined as follows:
			\begin{align*}
				u_1(x,y,t)&=-10x^2(x-1)^2y(y-1)(2y-1)\frac{t^{2+\alpha}}{\Gamma(3+\alpha)}e^{-\lambda t},\\
				u_2(x,y,t)&=10x(x-1)(2x-1)y^2(y-1)^2\frac{t^{2+\alpha}}{\Gamma(3+\alpha)}e^{-\lambda t},\\
				p(x,y,t)&=10(2x-1)(2y-1)\cos t,
			\end{align*}
			where 
			\begin{align*}
				K(t)=e^{-\lambda t}\frac{1}{\Gamma(\alpha)}t^{\alpha-1}, \ \alpha=0.5,\ \lambda=0.5.
			\end{align*}

			For this weakly singular example we again use $\mathtt{tol}=10^{-12}$ in the compressed scheme and choose $\Delta t=\mathcal O(h)$, specifically $h=\frac14\Delta t$. The generic theory in Section~\ref{section 6} yields an error bound of order $\Delta t^{1+\alpha}$ for $\alpha\in(0,1)$. The manufactured solution used here is smoother than the worst-case setting covered by the theorem, and the observed velocity rate in Table~\ref{new-table3} is empirically close to second order. As in the nonsingular test, the compressed and uncompressed schemes are nearly indistinguishable at the reported tolerance; see Tables~\ref{new-table3}--\ref{new-table4}. Figure~\ref{figure--2} shows the accompanying savings in wall-clock time and memory.
			
			\begin{table}[h]
				\centering
				\begin{tabular}{c|c|c|c|c|c}
					\Xhline{1pt}
					$\frac{\sqrt{2}}{h}$  & $ \|\bm u_h^N-\bm u(T)\| $ & rate & $ \|\widehat {\bm u}_h^N-\bm u(T)\| $ & rate & $ \|\bm u_h^N-\widehat{\bm u}_h^N\| $\\ \Xhline{1pt}
					$ 20 $  & 1.2841E-04      & -   & 1.2841E-04     & -   & 3.2230E-14       \\ \hline
					$ 30 $  & 5.6702E-05     & 2.0159   & 5.6702E-05      &2.0159   & 1.9605E-14      \\ \hline
					$ 40 $ & 3.1774E-05       & 2.0131   & 3.1774E-05      & 2.0131   & 8.9601E-15     \\ \hline
					$ 50 $  & 2.0284E-05     & 2.0113    & 2.0284E-05      & 2.0113   & 1.1126E-14      \\ \hline
					$ 60$  & 1.4062E-05     & 2.0094    & 1.4062E-05     & 2.0094    & 9.3910E-15       \\ \hline
					$ 70 $  & 1.0318E-05       & 2.0083    & 1.0318E-05    & 2.0083    & 1.1573E-14      \\ \hline
					$ 80 $  & 7.8923E-06       & 2.0070    & 7.8923E-06      & 2.0070   & 1.0211E-14     \\ \hline
					$ 90 $ &6.2311E-06       & 2.0065    & 6.2311E-06      & 2.0065   & 8.6845E-15       \\ \hline
					$ 100 $ & 5.0440E-06       & 2.0060    & 5.0440E-06      & 2.0060   & 8.6372E-15       \\ \hline
					$ 110 $ & 4.1664E-06       & 2.0055    & 4.1664E-06      & 2.0055   & 7.9131E-15      \\ \hline
					$120 $ & 3.4995E-06      & 2.0047   & 3.4995E-06      & 2.0047   & 6.4789E-15       \\  \Xhline{1pt}
				\end{tabular}
				\caption{The convergence rates of velocity $ \bm{u} $ at final time $ T=1 $:   $\| \bm u_h^N-\bm u(T)\|$, $\|\widehat{\bm  u}_h^N-\bm u(T) \|$  and $\| \bm{u}_h^N-\widehat{\bm u}_h^N\| $ for $\Delta t = \frac{1}{4}h$  with singular kernel.}
				\label{new-table3}
			\end{table}
		
			\begin{table}[h]
				\centering
				\begin{tabular}{c|c|c|c|c|c}
					\Xhline{1pt}
					$\frac{\sqrt{2}}{h}$  & $ \|p_h^N-p(T)\| $ & rate & $ \|\widehat{p}_h^N-p(T)\| $ & rate & $ \|p_h^N-\widehat{p}_h^N\| $\\ \Xhline{1pt}
					$ 20 $  & 9.2336E-03      & -   & 9.2336E-03     & -   & 1.9720E-14       \\ \hline
					$ 30 $  & 4.1973E-03     & 1.9440   & 4.1973E-03      &1.9440  & 1.7349E-14      \\ \hline
					$ 40 $ & 2.4283E-03       & 1.9022   & 2.4283E-03     & 1.9022   & 1.6742E-14    \\ \hline
					$ 50 $  & 1.6005E-03     & 1.8682    & 1.6005E-03      & 1.8682   & 7.8335E-15      \\ \hline
					$ 60$  & 1.1443E-03    &1.8399    & 1.1443E-03     & 1.8399    &2.2512E-14       \\ \hline
					$ 70 $  & 8.6491E-04       & 1.8164    & 8.6491E-04    & 1.8164    & 1.9549E-14      \\ \hline
					$ 80 $  & 6.8047E-04       & 1.7961    & 6.8047E-04      & 1.7961   & 2.1638E-14     \\ \hline
					$ 90 $ &5.5187E-04       & 1.7784    & 5.5187E-04      & 1.7784   & 5.6711E-14       \\ \hline
					$ 100 $ &4.5832E-04      & 1.7629    &4.5832E-04      & 1.7629   & 4.9709E-14       \\ \hline
					$ 110 $ & 3.8795E-04       & 1.7489    & 3.8795E-04      & 1.7489   & 6.3094E-14      \\ \hline
					$120 $ & 3.3355E-04       & 1.7364   & 3.3355E-04     & 1.7364   & 7.6639E-14       \\  \Xhline{1pt}
				\end{tabular}
				\caption{The convergence rates of pressure $p $ at final time $ T=1 $:   $\| p_h^N-p(T)\|$, $\|\widehat{p}_h^N-p(T) \|$  and $ \|p_h^N-\widehat{p}_h^N\| $ for $\Delta t = \frac{1}{4}h$  with singular kernel.}
				\label{new-table4}
			\end{table}
		
		\begin{figure}[h]
			\centering
			\begin{minipage}{0.49\linewidth}
				\centering
				\includegraphics[width=0.9\linewidth]{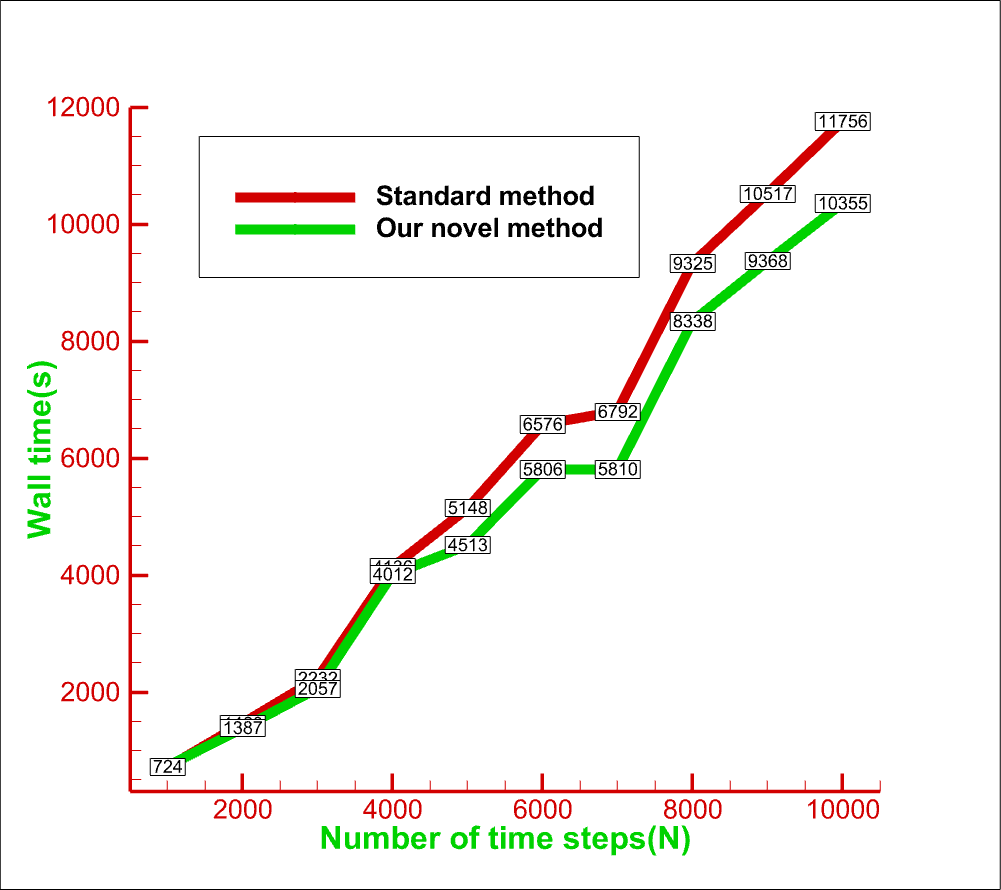}
			\end{minipage}
			\begin{minipage}{0.49\linewidth}
				\centering
				\includegraphics[width=0.9\linewidth]{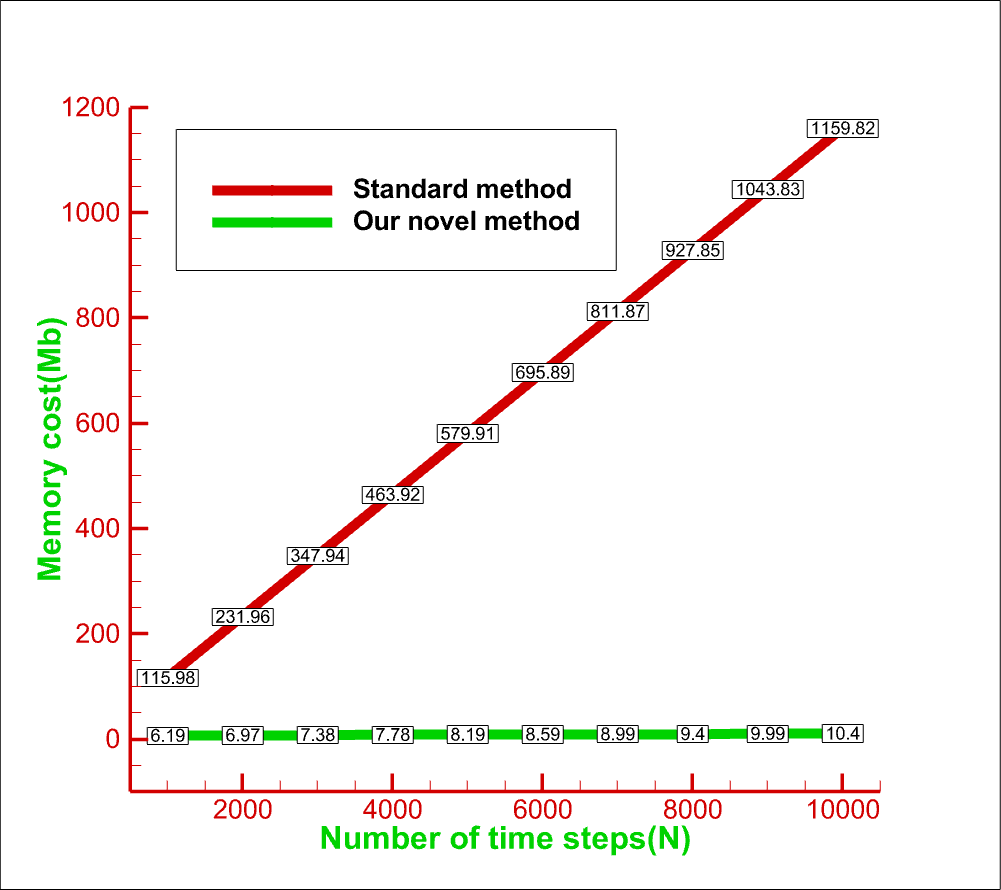}
			\end{minipage}
			\caption{A comparison of wall time and memory costs between the two algorithms is conducted for various time steps , specifically when $h=1/50$ and $\Delta t=10^{-4}$.}
			\label{figure--2}
		\end{figure}
		\end{example}
\begin{example}
	In this example we consider the benchmark problem of planar four-to-one contraction flow to validate the fully discrete scheme proposed in \Cref{section 2,section 4}.  The computational domain is   symmetric, demonstrated in \cref{district}, and of which the upstream and downstream lengths are all 8, and
	the widths of these two stream channels are $ 4:1 $, respectively. We impose fully developed flow boundary conditions at inflow boundary:
	\begin{align*}
		u_1=\frac{3}{8}(1-(\frac{4-y}{4})^2),\qquad u_2=0,
	\end{align*}
and at outflow boundary:
\begin{align*}
u_1=\frac{3}{2}(1-(y-4)^2),\quad u_2=0.
\end{align*} 
 While no-slip conditions are imposed on solid walls
\begin{align*}
	u_1=0,\qquad u_2=0.
\end{align*}
The viscoelastic flow equation is of the following form:
\begin{align*}
	\bm{u}_t-\mu\Delta \bm{u}-\int_{0}^{t}K(t-s)\Delta \bm{u}(s)ds&+(\bm{u}\cdot\nabla)\bm{u}+\nabla p=\bm{0},\\
	\nabla \cdot \bm{u}&=\bm{0},
\end{align*}
where
\begin{align*}
	K(t)= \rho e^{-\delta t},
\end{align*}
and  initial condition  is given:
\begin{align*}
	\bm{u}_0=\bm{0}.
\end{align*}
\begin{figure}[H]
	\centering
	\includegraphics[width=0.9\linewidth]{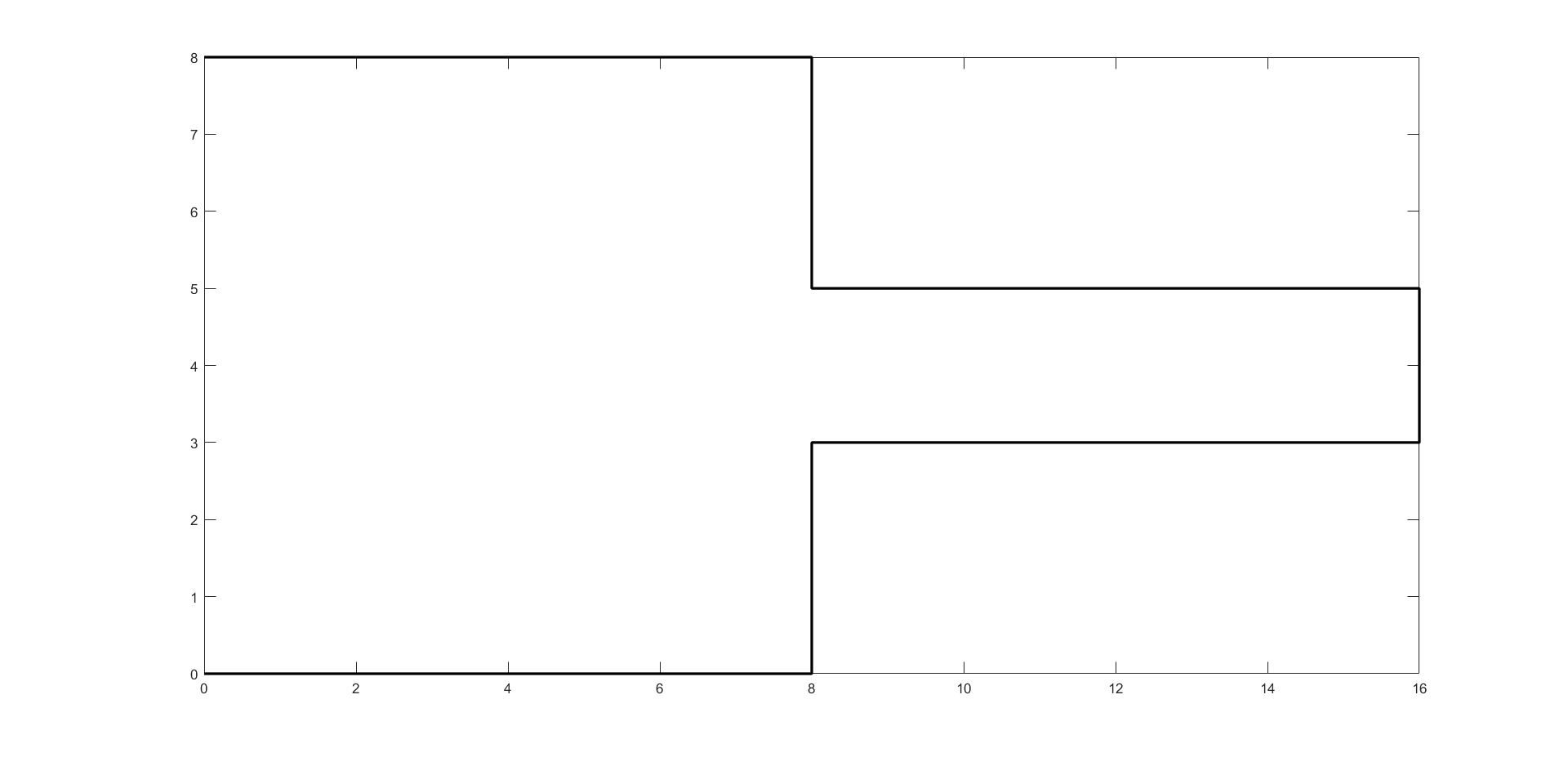}
	\caption{Geometry description of the planar four to one contraction flow domain}
		\label{district}
\end{figure}

The benchmark is tested with viscosity coefficient $\mu=100$, $\rho=1$, and $\delta=100$. We emphasize that the exponential kernel $K(t)=\rho e^{-\delta t}$ is recurrence-friendly and therefore not the most demanding case from the viewpoint of history storage; this example is included primarily as a qualitative flow benchmark rather than as a worst-case memory test. To resolve the corner singularity, we use the refined mesh shown in \Cref{mesh--local}. The contour plots of the velocity components $u_1$ and $u_2$ at the final time $T=1$ are displayed in \Cref{contour--u1,contour--u2} for the conventional finite element method and for the compressed method.

\begin{figure}[H]
	\centering
	\includegraphics[width=0.9\linewidth]{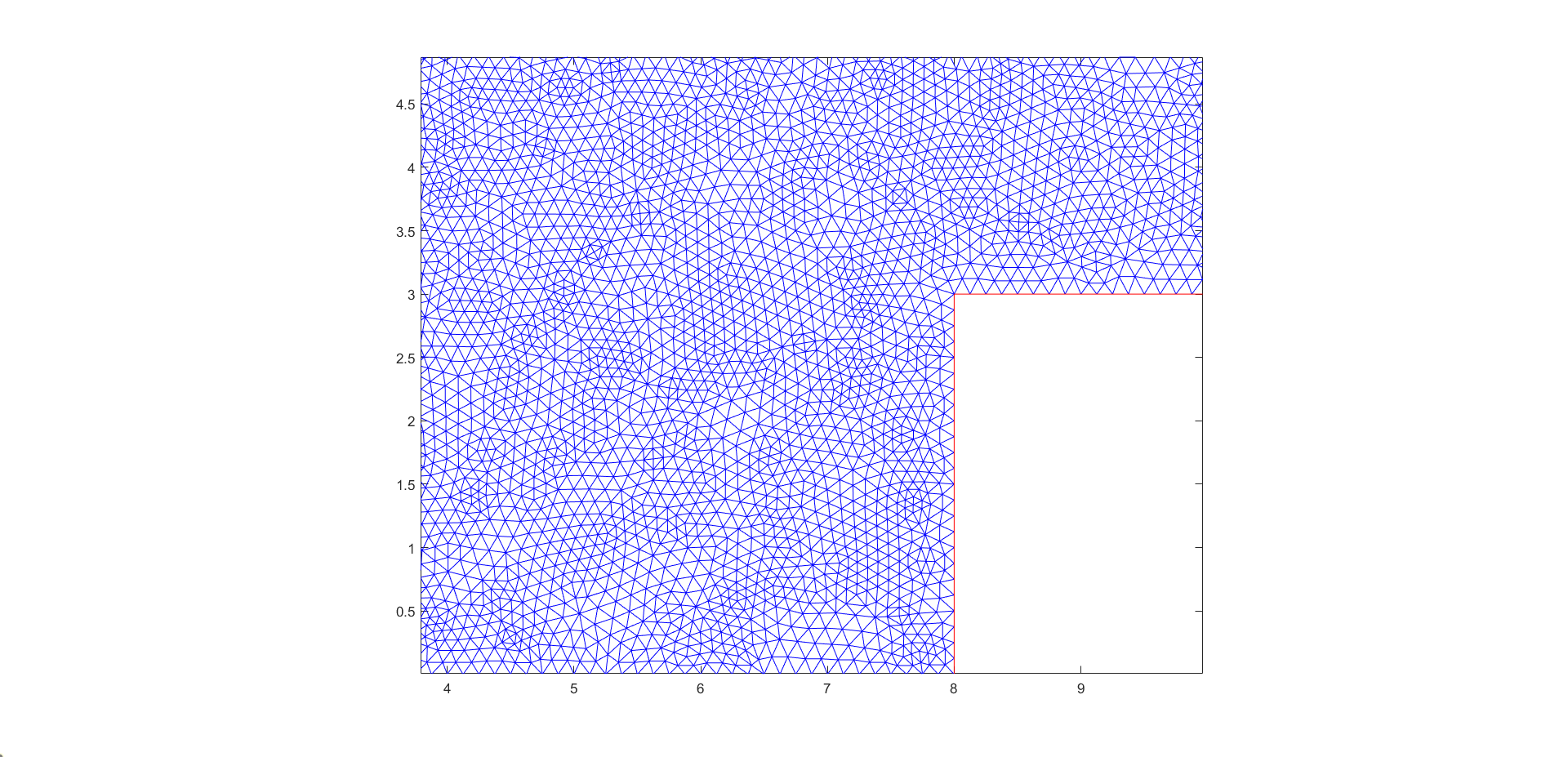}
	\caption{Local views of mesh system of 4 : 1 contraction flow}
	\label{mesh--local}
\end{figure}
	\begin{figure}[h]
	\centering
	\begin{minipage}{0.6\linewidth}
		\centering
		\includegraphics[width=1.0\linewidth]{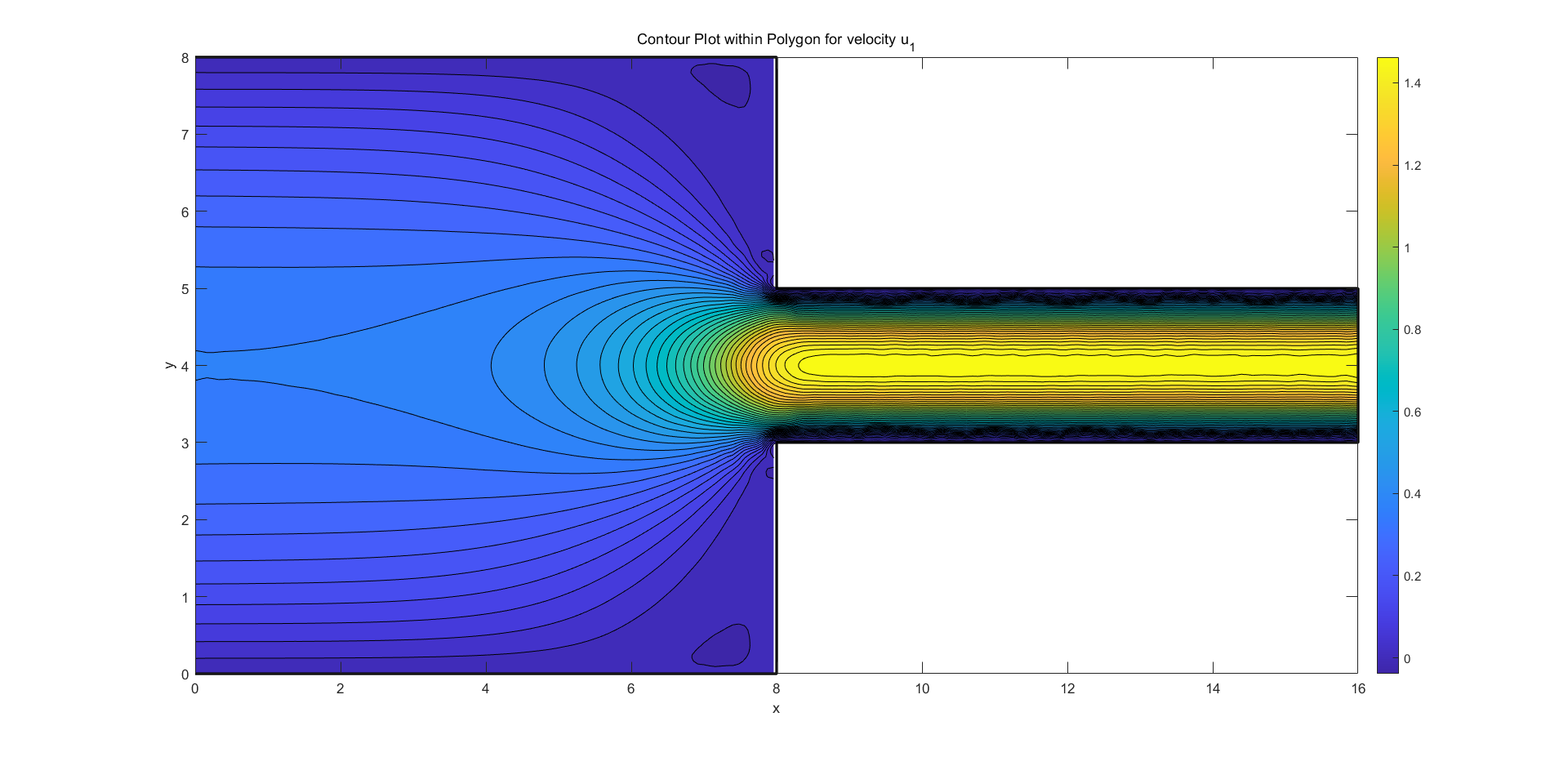}
	\end{minipage}
	\begin{minipage}{0.6\linewidth}
		\centering
		\includegraphics[width=1.0\linewidth]{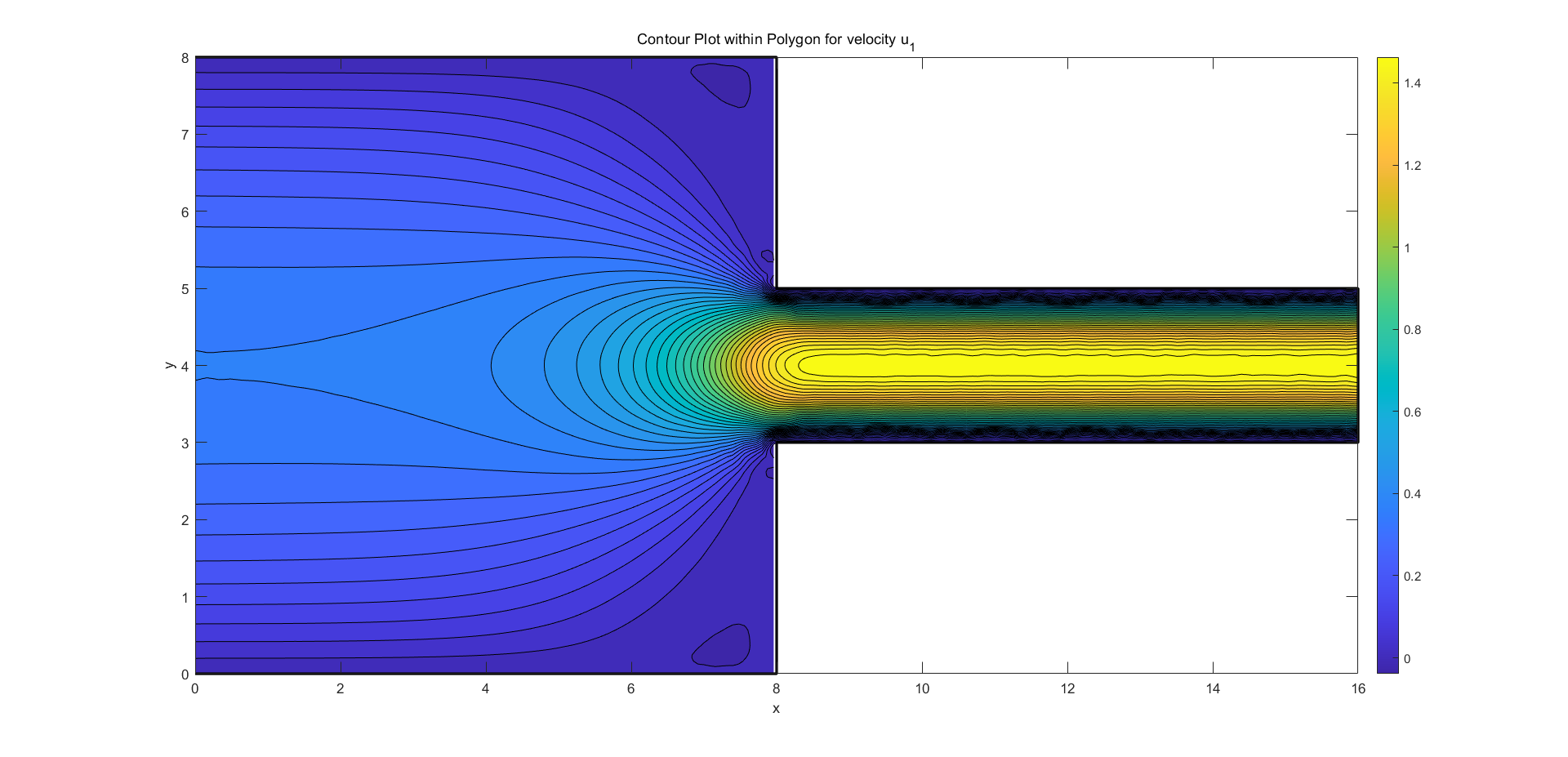}
	\end{minipage}
	\caption{Contour plots of the velocity component $u_1$. The upper panel is obtained by the conventional FEM and the lower panel by the compressed method.}
	\label{contour--u1}
\end{figure}

\begin{figure}[h]
	\centering
	\begin{minipage}{0.6\linewidth}
		\centering
		\includegraphics[width=1.0\linewidth]{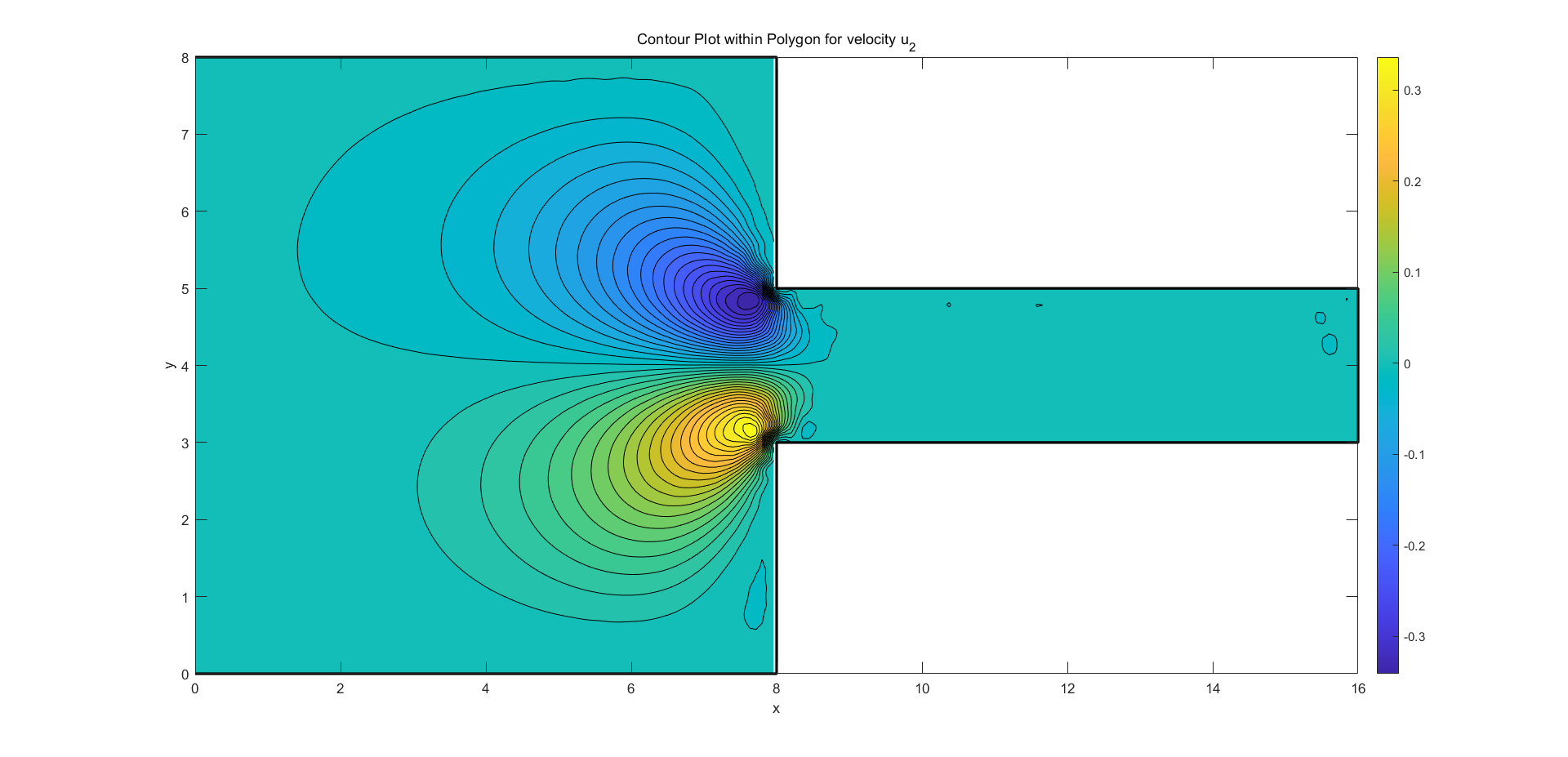}
	\end{minipage}
	\begin{minipage}{0.6\linewidth}
		\centering
		\includegraphics[width=1.0\linewidth]{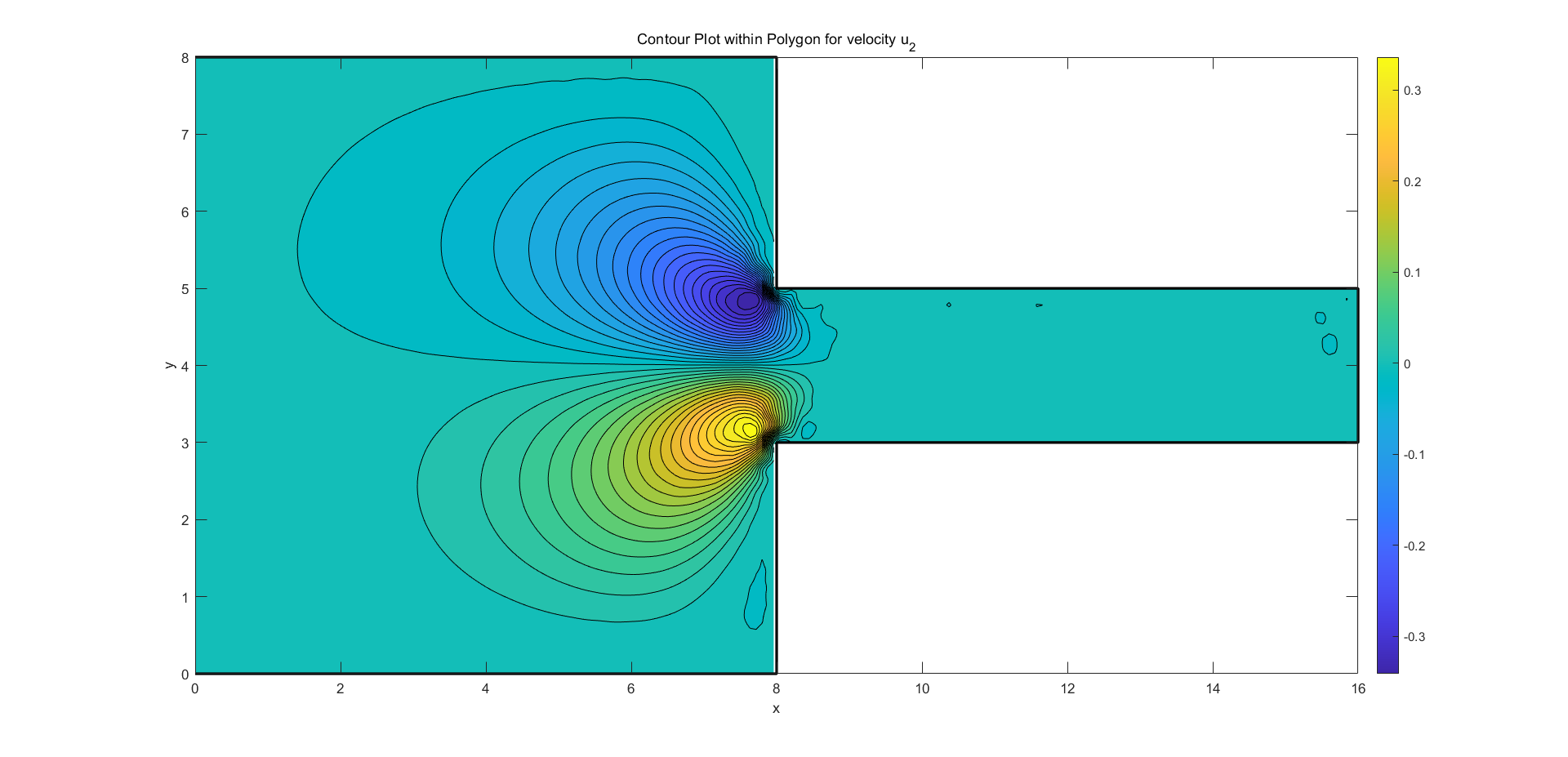}
	\end{minipage}
	\caption{Contour plots of the velocity component $u_2$. The upper panel is obtained by the conventional FEM and the lower panel by the compressed method.}
	\label{contour--u2}
\end{figure}

The two contour pairs are visually consistent, indicating that the compression does not materially change the resolved flow pattern for this benchmark. For a full journal submission, this section should ideally be complemented by quantitative diagnostics such as pressure drop, centerline velocity profiles, vortex size, retained rank, and compression ratio. We record this benchmark here as a first qualitative validation only.

\end{example}

\section{Conclusion and outlook}\label{section 8}

We proposed an incremental-SVD history-compression strategy for mixed finite element discretizations of nonlinear Oldroyd equations with general memory kernels. For nonsingular kernels, the method preserves the baseline accuracy of the fully discrete scheme up to an explicit tolerance-dependent perturbation term. The same compression idea extends to tempered weakly singular kernels discretized by convolution quadrature. The numerical examples show that, for the reported tolerances, the compressed and uncompressed schemes are nearly indistinguishable while the compressed implementation uses substantially less memory.

The present work also has clear limitations. First, the efficiency of the method depends on low numerical rank of the snapshot matrix, and that assumption should be documented empirically through singular-value decay and retained-rank diagnostics in future studies. Second, the current implementation still incurs quadratic dependence on the number of time steps in direct history accumulation; the compression reduces the dependence on the spatial dimension but does not by itself yield an $\mathcal O(N\log N)$-type fast solver. Third, the benchmark section would be strengthened by additional quantitative flow diagnostics and larger-scale tests.

Natural directions for future work therefore include combining the present solution-history compression with fast convolution quadrature or sum-of-exponentials acceleration, developing adaptive tolerance strategies, and extending the analysis and implementation to larger three-dimensional viscoelastic simulations.

\bibliographystyle{plain}
\bibliography{mybib_revised}

\end{document}